\newtheorem{theorem}{Theorem}[section]
\newtheorem{lemma}[theorem]{Lemma}
\newtheorem{corollary}[theorem]{Corollary}
\theoremstyle{remark}
\newtheorem{remark}[theorem]{Remark}
\theoremstyle{definition}
\newtheorem{definition}[theorem]{Definition}
\newcommand\cbrk{\text{$]$\kern-.15em$]$}}
\newcommand\opar{\text{\,\raise.2ex\hbox{${\scriptstyle|}$}\kern-.34em$($}}
\newcommand\cpar{\text{$)$\kern-.34em\raise.2ex\hbox{${\scriptstyle |}$}}\,}
\def\qed{{\hfill $\Box$ \bigskip}}
\def\XXint#1#2#3{{\setbox0=\hbox{$#1{#2#3}{\int}$}
\vcenter{\hbox{$#2#3$}}\kern-.5\wd0}}
\newcommand\bE{\mathbb{E}}
\newcommand\bG{\mathbb{G}}
\newcommand\bH{\mathbb{H}}
\newcommand\bL{\mathbb{L}}
\newcommand\bN{\mathbb{N}}
\newcommand\bR{\mathbb{R}}
\newcommand\bT{\mathbb{T}}
\newcommand\bZ{\mathbb{Z}}
\newcommand\fR{\mathbf{R}}
\newcommand\cA{\mathcal{A}}
\newcommand\cB{\mathcal{B}}
\newcommand\cD{\mathcal{D}}
\newcommand\cF{\mathcal{F}}
\newcommand\cG{\mathcal{G}}
\newcommand\cL{\mathcal{L}}
\newcommand\cP{\mathcal{P}}
\newcommand\cM{\mathcal{M}}
\newcommand\cT{\mathcal{T}}
\newcommand\cO{\mathcal{O}}
\newcommand\rB{\mathscr{B}}
\newcommand\rF{\mathscr{F}}
\newcommand\rG{\mathscr{G}}
\newcommand\aint{-\hspace{-0.38cm}\int}
\newcommand{\mysection}[1]{\section{#1}
\setcounter{equation}{0}}
\begin{document}

\title[A maximal regularity theory of moments]
{An $L_p$-maximal regularity estimate of moments  of  solutions to second-order stochastic partial differential equations}

\author{Ildoo Kim}
\address{Department of mathematics, Korea university, 1 anam-dong
sungbuk-gu, Seoul, south Korea 136-701}
\email{waldoo@korea.ac.kr}
\thanks{The author has been supported by the National Research Foundation of Korea grant funded by the Korea government (NRF-2020R1A2C1A01003959)}

\subjclass[2010]{60H15, 35R60}

\keywords{Maximal regularity moment estimate, Stochastic partial differential equations}

\begin{abstract}
We obtain   uniqueness and existence of a solution $u$ to the following second-order stochastic partial differential equation (SPDE) :
\begin{align}
								\label{abs eqn}
				 du= \left( \bar a^{ij}(\omega,t)u_{x^ix^j}+ f  \right)dt + g^k dw^k_t, \quad t \in (0,T); \quad u(0,\cdot)=0,
\end{align}
where $T \in (0,\infty)$, $w^k$ $(k=1,2,\ldots)$ are independent Wiener processes, $(\bar a^{ij}(\omega,t))$ is  a (predictable) nonnegative symmetric matrix valued stochastic process such that
$$
\kappa |\xi|^2 \leq \bar a^{ij}(\omega,t) \xi^i \xi^j  \leq  K |\xi|^2  \qquad \forall (\omega,t,\xi) \in \Omega \times (0,T) \times {\mathbf{R}}^d
$$
for some $\kappa, K \in (0,\infty)$,
$$
f \in L_p\left( (0,T) \times {\mathbf{R}}^d,  dt \times dx  ; L_r(\Omega, {\mathscr{F}} ,dP) \right),
$$
and 
$$
g, g_x \in L_p\left( (0,T) \times {\mathbf{R}}^d,  dt \times dx  ; L_r(\Omega, {\mathscr{F}} ,dP; l_2) \right)
$$
with $2 \leq r \leq p < \infty$ and appropriate measurable conditions. 
Moreover, for the solution $u$, we obtain the following maximal  regularity moment estimate 
\begin{align}
									\notag
&\int_0^T \int_{{\mathbf{R}}^d}\left(\bE\left[|u(t,x)|^r\right] \right)^{p/r} dx dt + \int_0^T \int_{{\mathbf{R}}^d}\left(\bE\left[|u_{xx}(t,x)|^r\right] \right)^{p/r} dx dt \\
										\notag
&\leq N \bigg(\int_0^T \int_{{\mathbf{R}}^d}\left(\bE\left[|f(t,x)|^r\right] \right)^{p/r} dx dt + \int_0^T \int_{{\mathbf{R}}^d}\left(\bE\left[|g(t,x)|_{l_2}^r\right] \right)^{p/r} dx dt  \\
										\label{abs est}
&\qquad + \int_0^T \int_{{\mathbf{R}}^d}\left(\bE\left[|g_x(t,x)|_{l_2}^r\right] \right)^{p/r} dx dt \bigg),
\end{align}
where $N$ is a positive constant depending only on $d$, $p$, $r$, $\kappa$, $K$, and $T$.
As an application, for the solution $u$ to \eqref{abs eqn}, the $r$-th moment $m^r(t,x):=\bE|u(t,x)|^r$ is in the parabolic Sobolev space $W_{p/r}^{1,2}\left((0,T) \times \mathbf{R}^d\right)$. 
\end{abstract}

\maketitle

\mysection{introduction}

Studying the second-order stochastic partial differential equations  of the form of \eqref{abs eqn} have been regarded as important problems in both mathematics and engineering communities for a long time since these equations naturally appear in filtering problems (cf. \cite{rozovsky2018stochastic,bain2008fundamentals}). 

Since Krylov  (\cite{krylov1996l_p}) initiated an $L_p$-theory of stochastic partial differential equations  \eqref{abs eqn}, there have been lots of articles developing this theory in various directions. For instance, here are some examples of generalizations  with respect to coefficients (\cite{auscher2014conical,kim2009sobolev,krylov2009divergence,porver2019}), domains (\cite{krylov1999sobolev,kim2004stochastic,kim2014weighted}), operators (\cite{mikulevicius2019cauchy,van2012stochastic,kim2013parabolic}), nonlinear equations (\cite{zhang2006lp,zhang2007regularities}), fractional derivatives (\cite{kim2016lp,kim2019sobolev}), general noises (\cite{chen2014lp,kim2012lp}), degenerate equations (\cite{gerencser2015solvability,kim2019sharp}), and systems (\cite{kim2013note,mikulevicius2001note}).

Surprisingly, to the best of our knowledge, there is no result considering  generalization of optimal exponents of moments for solutions and free data even though finite moment conditions play very important roles even in classical probability theories such as central limit theorems, law of large numbers, and etc (cf. \cite{durrett2019probability}).

If $p=r$, then \eqref{abs est} is obtained by Krylov (\cite{Krylov1999}). For $r<p$, we believe that this paper is the first attempt to obtain the maximal regularity estimate of the type \eqref{abs est}.

It seems to be natural that the moments of solutions are more regular even though solutions are comparatively irregular due to the effect of random noises.
Indeed, as an easy application of our theory, we show that the $r$-th moment of the solution $m^r(t,x):=\bE|u(t,x)|^r$ is in the parabolic Sobolev space $W_{p/r}^{1,2}\left((0,T) \times \mathbf{R}^d\right)$ for $2 \leq r \leq p <\infty$ (see Definition \ref{parabolic sobolev}).

We need to mention that our estimates are non-trivial even without stochastic noises. Usually, if there is no stochastic term $g$ in \eqref{abs eqn}, estimates easily come from deterministic theories by solving equations for each random parameter $\omega$. 
However, our estimate \eqref{abs est} cannot be obtained directly from deterministic estimates even though $g=0$ (see Theorem \ref{thm 3-2} below).

To obtain our estimate, we use classical harmonic analysis tools such as heat kernel estimates, the Hardy-Littlewood maximal theorem, the Fefferman-Stein theorem, and the Marcinkiewicz interpolation theorem.
These classical tools fit to quasilinear operators. However, since there appear some non-quasilinear operators in order to obtain our estimates, we develop a variant of Marcinkiewicz's interpolation theorem which could be applied in our setting for non-quasilinear operators (see Lemma \ref{ext lem} below).

This paper is organized as follows. In Section 2 we introduce stochastic Banach spaces and our main theorem, Theorem \ref{main thm}.  Heat kernel estimates are given  in Section 3.
A maximal regularity moment  estimate for solutions without random noises is given in Section 4. Boundedness of integral operators related to solution representations is given in Section 5. Finally, we prove all our main theorems in Section 6. 

We finish the introduction with  notation used in the article. 
\begin{itemize}

\item We use Einstein's summation convention throughout this paper. 

\item $\bN$ and $\bZ$ denote the natural number system and the integer number system, respectively.
$\bZ_+$ is the set of all nonnegative integers, i.e. $\bZ_+ =\{0,1,2,\ldots\}$.  
As usual $\fR^{d}$
stands for the Euclidean space of points $x=(x^{1},...,x^{d})$.
 For $i=1,...,d$, multi-index $\alpha=(\alpha_{1},...,\alpha_{d})$,
$\alpha_{i}\in\{0,1,2,...\}$, and functions $u(x)$ we set
$$
u_{x^{i}}=\frac{\partial u}{\partial x^{i}}=D_{i}u,\quad
D^{\alpha}u=D_{1}^{\alpha_{1}}\cdot...\cdot D^{\alpha_{d}}_{d}u.
$$
For $\alpha_i =0$, we define $D^{\alpha_i}_i u = u$. 
Simply, we use $u_x$ (or $\partial_x u$) and $u_{xx}$ (or $\partial_{xx}u$) to denote the gradient vector of $u$ and the Hessian matrix of $u$, respectively. 
%

\item $C^\infty(\fR^d)$ denotes the space of infinitely differentiable functions on $\fR^d$. 
By $C_c^\infty(\fR^d)$,  we denote the subspace of $C^\infty(\fR^d)$ with the compact support.

\item For $p \in (0,\infty)$, a quasi normed space $F$,
and a  measure space $(X,\mathcal{M},\mu)$, 
by $L_{p}(X,\cM,\mu;F)$,
we denote the space of all $F$-valued $\mathcal{M}^{\mu}$-measurable functions
$u$ so that
\[
\left\Vert u\right\Vert _{L_{p}(X,\cM,\mu;F)}:=\left(\int_{X}\left\Vert u(x)\right\Vert _{F}^{p}\mu(dx)\right)^{1/p}<\infty,
\]
where $\mathcal{M}^{\mu}$ denotes the completion of $\cM$ with respect to the measure $\mu$. 
For $u \in L_{p}(X,\cM,\mu;F)$, we say that $v$ is a {\bf modification} of $u$ if
$$
u(x)=v(x) \qquad (\mu \text{-} a.e.),
$$ 
that is, there exists a subset $X_0 \subset X$ such that 
$$
u(x) =v(x) \qquad \forall x \in X_0
$$
and $\mu (X \setminus X_0) =0$. 
For $p=\infty$, we write $u \in L_{\infty}(X,\cM,\mu;F)$ if
$$
 \|u\|_{L_{\infty}(X,\cM,\mu;F)} 
:= \inf\left\{ \nu \geq 0 : \mu( \{ x: \|u(x)\|_F > \nu\})=0\right\} <\infty,
$$
where $\sup$ denotes the essential supremum with respect to measure $\mu$.
If there is no confusion for the given measure and $\sigma$-algebra, we usually omit them.
Moreover, if $F=\fR$, then $F$ will be omitted. Especially, $L_p$ denotes the $L_{p}(X,\cM,\mu;F)$ space, where $X=\fR^d$, $\cM$ is the Lebesgue measurable sets, $\mu$ is the Lebesgue measure, and $F=\fR$. 

\item For  a Lebesgue measurable set  $\cO \subset \fR^d$, $|\cO|$ denotes the Lebesgue measure of $\cO$.


\item By $\cF$ and $\cF^{-1}$ we denote the d-dimensional Fourier transform and the inverse Fourier transform, respectively. That is,
$\cF[f](\xi) := \int_{\fR^{d}} e^{-i x \cdot \xi} f(x) dx$ and $\cF^{-1}[f](x) := \frac{1}{(2\pi)^d}\int_{\fR^{d}} e^{ i\xi \cdot x} f(\xi) d\xi$.

\item If we write $N=N(a,b,\cdots)$, this means that the
constant $N$ depends only on $a,b,\cdots$. 
\end{itemize}




\mysection{Setting and main results}

Let $(\Omega,\rF,P)$ be a complete probability space,
$\{\rF_{t},t\geq0\}$ be an increasing filtration of
$\sigma$-fields $\rF_{t}\subset\rF$, each of which contains all
$(\rF,P)$-null sets. By  $\cP$ we denote the predictable
$\sigma$-algebra generated by $\{\rF_{t},t\geq0\}$ and  we assume that
on $\Omega$ there exist  independent one-dimensional Wiener
processes $w^{1}_{t},w^{2}_{t},...$, each of which is a Wiener
process relative to $\{\rF_{t},t\geq0\}$. 

We study the following initial value problem throughout the paper:
\begin{align}
				\label{main eqn}
				 du= \left( \bar a^{ij}(t)u_{x^ix^j}+ f  \right)dt + g^k dw^k_t, \quad t \in (0,T); \quad u(0,\cdot)=0. 
\end{align}
As mentioned in the introduction,  Einstein's summation convention with respect to indices $i,j,k$ is assumed and  the argument for random parameter $\omega \in \Omega$ is omitted (mostly)  in the above equation for the simplicity of notation. We fix $d \in \bN$ to denote the space dimension throughout the paper. 

First, we introduce some deterministic function spaces related to our results. For $p \in (0,\infty]$ and a nonnegative integer $\gamma$, let
$H_{p}^{\gamma}=H_{p}^{\gamma}(\fR^{d})$ denote  the class of all
(tempered) distributions $u$  on $\fR^{d}$ such that
\begin{equation*}
D^{\alpha}_x u\in L_p(\fR^d), \, \,\,|\alpha|\leq \gamma.
\end{equation*}
Here $H_p^\gamma$ is called the Sobolev space with the order $\gamma$ and the exponent $p$. 
Let  $l_2$ denote the set of all sequences $a=(a^1,a^2,\cdots)$ such that
$$
|a|_{l_{2}}:= \left(\sum_{k=1}^{\infty}|a^{k}|^{2}\right)^{1/2}<\infty.
$$
Similarly, for $p  \in (0,\infty]$ and a nonnegative integer $\gamma$,
$H_{p}^{\gamma}(l_2)=H_{p}^{\gamma}(\fR^{d} ; l_2)$ denote  the class of all
$l_2$-valued (tempered) distributions $u$  on $\fR^{d}$ such that
\begin{equation*}
D^{\alpha}_x u\in L_p(\fR^d;l_2), \, \,\,|\alpha|\leq \gamma.
\end{equation*}
 It is well-known that for $p \in [1,\infty]$, $H_p^\gamma$ and $H_p^{\gamma}(l_2)$ are  Banach spaces with the norms
 $$
\|u\|_{H_p^\gamma} :=  \sum_{ |\alpha| \leq \gamma} \| D^\alpha u\|_{L_p(\fR^d)}
 $$
 and
  $$
\|u\|_{H_p^\gamma(l_2)} :=  \sum_{ |\alpha| \leq \gamma} \| |D^\alpha u|_{l_2}\|_{L_p(\fR^d)},
 $$
 respectively (see \cite{Krylov1999,Krylov2008} and references therein). 
Set $ L_p=H^0_p$ and  $L_p(l_2) = H^0_p(l_2)$.
Next, we introduce stochastic spaces.  
For $T \in (0,\infty]$, $p \in (0,\infty]$, and a nonnegative integer $\gamma$, we denote
$$
 \bH^{\gamma}_{p}(T):=L_p(\Omega\times (0,T),  \rF \times \cB\left((0,T)\right), P\times dt ; H^{\gamma}_p),
$$
and its predictable subspace $\tilde \bH^{\gamma}_{p}(T)$, i.e. $u \in  \tilde \bH^{\gamma}_{p}(T)$ if there exists a $\cP \times \cB(\fR^d)$-measurable modification $\tilde u$ of $u$ such that
$$
\tilde u \in  \bH^{\gamma}_{p}(T),
$$
where $\cB\left( (0,T) \right)$  and $\cB(\fR^d)$ are the Borel sets of $(0,T)$  and $\fR^d$, respectively and $dt$  denotes the Lebesgue measures on $(0,T)$.
In other words,
$$
\tilde \bH^{\gamma}_{p}(T) =L_p(\Omega\times (0,T),  \cP, P\times dt ; H^{\gamma}_p).
$$
Similarly, we define
$$
\bH^{\gamma}_{p}(T,l_2):=L_p(\Omega\times (0,T),  \rF \times \cB\left((0,T)\right), P\times dt ; H^{\gamma}_p(l_2))
$$
and its predictable subspace $\tilde \bH^{\gamma}_{p}(T,l_2)$.
For the notational convenience, we set
$$
\bL_p(T) := \bH_p^0(T), \quad \text{and} \quad \bL_p(T,l_2):= \bH_p^0(T,l_2).
$$
Moreover, for $p,r\in (0,\infty]$, we define
$\bL_{p,r}(T)$ and $\tilde \bL_{p,r}(T)$ by the $\rF \times \cB((0,T)) \times \cB(\fR^d)$-measurable and the $\cP \times \cB(\fR^d)$-measurable
subspaces of
$$
L_p\left((0,T) \times \fR^d,  \cB\left( (0,T) \right) \times \cB\left(\fR^d\right) ; L_r(\Omega,\rF)  \right),
$$
respectively,
 i.e. $u \in \bL_{p,r}(T)$ if there exists a  $\rF \times \cB((0,T))  \times  \cB(\fR^d)$- measurable modification  $\tilde u(\omega,t,x)$ of $u$ on $\Omega \times (0,T) \times \fR^d$  such that
$$
\|u\|_{\bL_{p,r}(T) } := \left(\int_0^T \int_{\fR^d}  \left\| \tilde u(\cdot,t,x) \right\|_{L_r(\Omega)}^{p} dx dt \right)^{1/p} < \infty
$$
and $u \in \tilde \bL_{p,r}(T)$ if there exists a $\cP \times \cB(\fR^d)$-measurable modification $\tilde u$ of $u$ such that
$$
\left(\int_0^T \int_{\fR^d}  \left\| \tilde u(\cdot,t,x) \right\|_{L_r(\Omega)}^{p} dx dt \right)^{1/p} < \infty. 
$$
Note that for any bounded subset $U \subset \bR$ and $u \in \bL_{p,r}(T)$, applying H\"older's inequality and Minkowski's inequality 
\begin{align}
							\notag
\left(\bE\left[ \left|\int_0^T \int_{U} |u(t,x)| dx dt\right|^r \right]\right)^{1/r}
&\leq\int_0^T \int_{U}  \left(\bE\left[  |u(t,x)|^r \right]\right)^{1/r} dx dt \\
							\label{2020111501}
&\leq N\|u\|_{\bL_{p,r}(T)} < \infty. 
\end{align}
Thus for any $u \in \bL_{p,r}(T)$, $u(t,x)$ is locally integrable function on $(0,T) \times \bR^d$ almost surely and it implies that 
weak derivatives of $u$ with respect to $t$ and $x$ exist almost surely. 
Finally for any positive integer $\gamma$,  we write
$$
u \in \bH^\gamma_{p,r}(T)
$$
and
$$
\|u \|_{\bH^\gamma_{p,r}(T)} :=\|u\|_{\bL_{p,r}(T)} + \sum_{|\alpha|=\gamma} \|D^\alpha u \|_{\bL_{p,r}(T)}
$$
if $u \in \bL_{p,r}(T)$ and $D_x^\alpha u \in \bL_{p,r}(T)$ for all multi indexes $|\alpha| = \gamma$.
The notation $\tilde \bH^\gamma_{p,r}(T)$ is used to denote the predictable subspace of $\bH^\gamma_{p,r}(T)$, i.e. 
$u  \in \tilde \bH^\gamma_{p,r}(T)$ if $u \in \tilde \bL_{p,r}(T)$ and $D_x^\alpha u \in \tilde \bL_{p,r}(T)$ for all multi indexes $|\alpha| = \gamma$. Similarly, the spaces $\bH^\gamma_{p,r}(T,l_2)$ and  $\tilde \bH^\gamma_{p,r}(T,l_2)$ can be defined for all $p,r\in (0,\infty]$ and $\gamma \in \bZ_+$.

\begin{remark}
					\label{re banach}
\begin{enumerate}[(i)]
\item If $p \in (0,1)$ or $r \in (0,1)$, then the given spaces $ \bH^\gamma_{p}(T)$, $ \bH^\gamma_{p}(T,l_2)$, $ \bH^\gamma_{p,r}(T)$,  and $ \bH^\gamma_{p,r}(T,l_2)$ are not Banach spaces but complete metric spaces. 

\item The predictable subspaces $\tilde \bH^\gamma_{p}(T)$, $\tilde \bH^\gamma_{p}(T,l_2)$, $\tilde \bH^\gamma_{p,r}(T)$,  and $\tilde \bH^\gamma_{p,r}(T,l_2)$ are closed
subspaces of $ \bH^\gamma_{p}(T)$, $\bH^\gamma_{p}(T,l_2)$, $ \bH^\gamma_{p,r}(T)$, and $\bH^\gamma_{p,r}(T,l_2)$, respectively.
Moreover, if $p,r \in (1,\infty)$, then  $\tilde \bH^\gamma_{p}(T)$, $\tilde \bH^\gamma_{p}(T,l_2)$, $\tilde \bH^\gamma_{p,r}(T)$, $\tilde \bH^\gamma_{p,r}(T,l_2)$ are reflexive Banach spaces (cf. \cite[Theorem 1.3.10 and Theorem 1.3.21]{hytonen2016analysis}).
In particular, any linear bounded functional $\bL$ on $\tilde \bL_{p,r}(T,l_2)$ with  $T \in (0,\infty]$, and $p,r \in (1,\infty)$ is given by
$$
\bL(u) =   \int_0^T \int_{\fR^d} \bE \left[ \sum_{k=1}^\infty \left(u^k(\omega,t,x) v^k(\omega,t,x) \right)  \right] dx dt
$$
for some $v \in \tilde \bL_{p^\ast, r^\ast}(T,l_2)$ with $p^\ast= p/(p-1)$ and $r^\ast = r/(r-1)$. 
\end{enumerate}
\end{remark}

Next we introduce the definition of solutions to the stochastic equations.
Let $\cD$ be the space of all distributions (generalized functions) on $C_c^\infty(\fR^d)$, 
and let $\cD(l_2)$ denote the space of  all $l_2$-valued distributions (generalized functions) on $C_c^\infty(\fR^d)$.

\begin{definition}
					\label{def sol-2}
Let $u_0$ be a $\cD$-valued random variable, $u$ and $F$ be a $\cD$-valued stochastic process, and $g$ be a $\cD(l_2)$-valued predictable stochastic process. 
We say that  a $\cD$-valued stochastic process $u$ satisfies (or is a solution to) the equation
\begin{align*}
&du(t,x)= F(t,x) dt +g(t,x) dw^k_t, \quad (t,x) \in (0,T)  \times \fR^d \\
&u(0,\cdot)=u_0
\end{align*}
in the sense of distributions if   for any $\phi\in C^{\infty}_0(\fR^d)$, the equality that
\begin{align*}
(u(t,\cdot),\phi)=   (u_0 ,\phi) + \int^t_0 (F(s,\cdot),\phi)ds+\sum_k \int^t_0 (g^k(s,\cdot),\phi)dw^k_t
\end{align*}
holds for all $t < T$ $(a.s.)$.  

In particular,   we say that  $u \in \bH_{p,r}^2(T)$ is a  solution to \eqref{main eqn} if for any $\phi\in C^{\infty}_0(\fR^d)$,
\begin{eqnarray}
(u(t,\cdot),\phi)
											\label{def sol-2}
&=&   (u_0 ,\phi) + \int^t_0  \left[ \left(  a^{ij}(t) u(s,\cdot), \phi_{x^ix^j} \right) + (f(s,\cdot),\phi) \right]ds \\
											\notag
&& +\sum_k \int^t_0 \left( g(s,\cdot),\phi \right) dw^k_t \qquad \forall t \in (0,T)~ (a.s.).
\end{eqnarray}
It is easy to check that \eqref{def sol-2} makes sense for all $u \in \bH_{p,r}^2(T)$, $f \in \bL_{p,r}(T)$, and $g \in \tilde \bH^1_{p,r}(T,l_2)$ due to \eqref{2020111501}.
\end{definition}

\begin{theorem}
				\label{main thm}
Let $2 \leq r \leq p <\infty$ and $T \in (0,\infty)$.
Assume that the coefficients $\bar a^{ij}(\omega,t)$ is nonnegative symmetric matrix-valued predictable and satisfy the following ellipticity condition
\begin{align*}
\kappa |\xi|^2 \leq \bar a^{ij}(\omega, t) \xi^i \xi^j \leq K |\xi|^2 \qquad  \forall (\omega, t,\xi) \in \Omega \times [0,\infty) \times \fR^d.
\end{align*}
Then for all $f \in \bL_{p,r}(T)$ and $g \in \tilde \bH^1_{p,r}(T,l_2)$, there exists a unique solution $u \in \bH_{p,r}^2(T)$ to equation \eqref{main eqn} such that
\begin{align}
								\label{main est}
\|u\|_{\bH_{p,r}^2(T)} \leq N_1\left( \|f\|_{\bL_{p,r}(T)} + \|g\|_{\bH^1_{p,r}(T,l_2)}\right),
\end{align}
and
\begin{align}
								\label{main est-2}
\|u_{xx}\|_{\bL_{p,r}(T)} \leq N_2\left( \|f\|_{\bL_{p,r}(T)} + \|g_x\|_{\bL_{p,r}(T)}\right),
\end{align}
where $N_1=N_1(d,p,r,\kappa,K,T)$ and $N_2=N_2(d,p,r,\kappa,K)$. 
\end{theorem}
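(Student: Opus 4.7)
The argument is linear and splits into the contributions of the non-stochastic inhomogeneity $f$ and the stochastic inhomogeneity $g$. Writing $u = u^{(1)} + u^{(2)}$ where $u^{(1)}$ solves the equation with $g=0$ and $u^{(2)}$ solves it with $f=0$, and using that $\bar a^{ij}$ is independent of $x$, a Fourier computation combined with Itô's formula applied to $e^{\int_0^s \bar a^{ij}(\tau)\xi^i\xi^j d\tau}\hat u(s,\xi)$ produces the representation
\begin{align*}
\hat u(t,\xi)
&= e^{-\int_0^t \bar a^{ij}(\tau)\xi^i\xi^j d\tau}\int_0^t e^{\int_0^s \bar a^{ij}(\tau)\xi^i\xi^j d\tau}\hat f(s,\xi)\, ds \\
&\quad + e^{-\int_0^t \bar a^{ij}(\tau)\xi^i\xi^j d\tau}\int_0^t e^{\int_0^s \bar a^{ij}(\tau)\xi^i\xi^j d\tau}\hat g^k(s,\xi)\, dw^k_s.
\end{align*}
The crucial point is that the integrand $e^{\int_0^s \bar a\, d\tau}\hat g^k(s,\xi)$ is $\rF_s$-predictable, so the Itô integral is genuinely well-defined; the prefactor $e^{-\int_0^t \bar a\,d\tau}$ is merely an $\rF_t$-measurable multiplier. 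Inverting the Fourier transform produces a random heat-kernel representation for $u^{(1)}$ and $u^{(2)}$ to which the machinery of Sections 3--5 applies.

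For the deterministic piece $u^{(1)}$, I would invoke the moment estimate developed in Section 4 to obtain
$$\|u^{(1)}_{xx}\|_{\bL_{p,r}(T)} \leq N\|f\|_{\bL_{p,r}(T)}.$$
When $r=p$ this is the case of Krylov's $L_p$-theory via Fubini, but for $r<p$ the norm $L_r(\Omega)$ sits \emph{inside} the spatial $L_p$-norm, so no such Fubini reduction is possible. Section 4 handles this by applying the heat-kernel bounds of Section 3 to the random Gaussian propagator and then invoking the Hardy-Littlewood maximal theorem, the Fefferman-Stein sharp function theorem, and the non-quasilinear variant of Marcinkiewicz interpolation \eqref{ext lem}. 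For the stochastic piece $u^{(2)}$, Burkholder-Davis-Gundy bounds the $r$-th moment of the Itô integral by a deterministic quadratic functional of $g$, to which the boundedness of the integral operators in Section 5 applies directly, yielding
$$\|u^{(2)}_{xx}\|_{\bL_{p,r}(T)} \leq N\|g_x\|_{\bL_{p,r}(T,l_2)}.$$
Summing the two bounds gives \eqref{main est-2}. The full estimate \eqref{main est} then follows by bounding $\|u\|_{\bL_{p,r}(T)}$ from the integrated form of the equation, using BDG once more together with a Gronwall argument, which is exactly what introduces the $T$-dependence of $N_1$.

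Uniqueness in $\bH^2_{p,r}(T)$ is immediate from \eqref{main est} applied to the difference of two putative solutions. For existence, one first produces a solution for smooth, compactly supported data and coefficients piecewise constant in $t$ (where the classical $L_p$-theory of \cite{Krylov1999} at $p=r$ applies directly to each smoothness/integrability scale), and then extends to general $(f,g)\in\bL_{p,r}(T)\times\tilde\bH^1_{p,r}(T,l_2)$ by density, the a priori estimate \eqref{main est} guaranteeing convergence of the approximating solutions in $\bH^2_{p,r}(T)$.

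The hard part is entirely located in the regime $r<p$. With $L_r(\Omega)$ inside $L_p((0,T)\times\fR^d)$, the natural kernel operators arising from the representation above are no longer quasilinear in the sense of classical interpolation theory, so the standard Calderón-Zygmund/Marcinkiewicz framework does not apply, and one cannot simply dualize or use Fubini to reduce to a deterministic statement. Lemma \ref{ext lem} is engineered precisely to remove this obstacle, and the two moment estimates of Sections 4 and 5 constitute the technical core of the paper; the proof of Theorem \ref{main thm} itself is essentially the assembly of these ingredients with the Fourier representation above and a standard density argument.
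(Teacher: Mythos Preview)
Your decomposition $u=u^{(1)}+u^{(2)}$, with both pieces carrying the random coefficients $\bar a^{ij}(\omega,t)$, is not the one the paper uses, and the gap lies precisely in the stochastic piece $u^{(2)}$.

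Your Fourier factorisation $e^{-\int_s^t\bar a\,\xi\xi\,d\tau}=e^{-\int_0^t\bar a\,\xi\xi\,d\tau}\cdot e^{\int_0^s\bar a\,\xi\xi\,d\tau}$ is algebraically correct and does make the It\^o integral on the Fourier side well-defined, but it does not yield a physical-space representation to which Section~5 applies. The factor $e^{\int_0^s\bar a\,\xi\xi\,d\tau}$ is exponentially growing in $\xi$ and corresponds to no integrable kernel, so after inverse Fourier transform you cannot write $u^{(2)}$ as a stochastic convolution with the random heat kernel $\bar p$. More importantly, the stochastic operators $\rG_r$, $\bG_r$ and the key Theorems~\ref{thm 5-4} and~\ref{thm 5-3} are proved only for the \emph{deterministic} kernel $p(t,\rho,x)$; the paper stresses this explicitly in Remark~\ref{rmk 20200123} and Remark~\ref{rmk 3.6}(i). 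Your claim that BDG produces a ``deterministic quadratic functional of $g$'' is exactly what fails when $\bar a$ is random: after BDG the quadratic variation still contains the random kernel, and the $\omega$-dependence of $\bar p$ and of $g$ cannot be disentangled inside the expectation in the way the proof of Theorem~\ref{thm 5-4} requires.

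The paper circumvents this by a different splitting: first solve $du^{1}=\Delta u^{1}\,dt+g^{k}\,dw^{k}_{t}$ with the \emph{Laplacian}, so that the stochastic convolution uses the non-random kernel and Theorem~\ref{thm 5-3} applies; then solve the purely deterministic equation $du^{2}=\big(\bar a^{ij}u^{2}_{x^ix^j}+\bar a^{ij}u^{1}_{x^ix^j}-\Delta u^{1}+f\big)\,dt$ with the random coefficients, to which Theorem~\ref{thm 3-2} (random kernel, but no stochastic integral) applies. Setting $u=u^{1}+u^{2}$ gives the solution and both estimates. Your treatment of the deterministic part, uniqueness, and the density argument are all fine; it is only the handling of the stochastic part under random coefficients that needs this extra reduction step.
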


\begin{remark}
\begin{enumerate}[(i)]
\item If the deterministic free data $f \in \tilde \bL_{p,r}(T)$, then the solution $u$ is also predictable, i.e. $u \in  \tilde \bH_{p,r}^2(T)$.

\item We considered zero initial value problem for simplicity. 
Combining with classical results (cf. \cite{Krylov1999}), we can handle the solvability of the following initial value problem 
$$
				 du= \left( \bar a^{ij}(\omega,t)u_{x^ix^j}+ f  \right)dt + g^k dw^k_t, \quad t \in (0,T); \quad u(0,\cdot)=u_0,
$$
where $u_0$ is in a Sobolev space-valued or Besov space-valued $L_p(\Omega)$-space.


\item  We can construct  similar initial value problems with stopping times $\tau \leq T$ instead of the deterministic time $T$ by considering the following extension of the free data
$$
1_{ t \leq \tau} f \quad \text{and} \quad 1_{t \leq \tau} g.
$$

\item If we only want to control the $\bL_{p,r}(T)$-norm of the solution $u$ by $f$ and $g$, 
then the range of $r$ and $p$ can be relaxed to $p,r \in [2,\infty)$, that is, the condition $r \leq p$ is not necessary (see Lemma \ref{lem20200110} and Lemma \ref{lem2020011010} below). 
In other words,  for $p,r \in [2,\infty)$,
\begin{align*}
\|u\|_{\bL_{p,r}(T)} \leq N(d,p,r,\kappa,K,T)\left( \|f\|_{\bL_{p,r}(T)} + \|g\|_{\bL_{p,r}(T,l_2)}\right).
\end{align*}

\item Assume that $g=0$ in \eqref{main eqn}. Then the range of $r$ and $p$ can be extended to $1 < r \leq p  < \infty$ 
and the predictable assumption on $\bar a^{ij}(t)$ can be weaken to the $\rF \times \cB((0,\infty))$-measurable condition (see Theorem \ref{thm 3-2} below). 
Similarly to (iv), the restriction that $r \leq p$ is not required as well if we merely control the $\bL_{p,r}(T)$-norm of the solution $u$.
More precisely, for $p,r \in [1,\infty]$ (Lemma \ref{lem20200110} below),
\begin{align*}
\|u\|_{\bL_{p,r}(T)} \leq N(d,p,r,\kappa,K,T)\left( \|f\|_{\bL_{p,r}(T)} \right).
\end{align*}
Moreover, if we  choose a data $f$ in a slightly different space, i.e. 
$$
f \in L_p\left(\Omega, \rF ,  P; L_r\left( (0,T), \cB((0,T)) ;L_r \right) \right),
$$
then the range of $p$ and $r$ can be relaxed to $p \in (0,\infty]$ and $r \in (1,\infty)$ (see Theorem \ref{thm 3-1} below). 
\end{enumerate}
\end{remark}

To state our second main theorem, we need a parabolic version of Sobolev space.
\begin{definition}[parabolic Sobolev space]
								\label{parabolic sobolev}
By $W_p^{1,2}\left( (0,T) \times \fR^d \right)$ with $p \in (1,\infty)$, we denote  the space of $\cB\left((0,T) \times \fR^d \right)$-measurable functions $m(t,x)$ such that
$$
\int_0^T \int_{\fR^d} |m(t,x)|^p dx dt < \infty,
$$
$$
\int_0^T \int_{\fR^d} |\partial_t m(t,x)|^p dx dt < \infty,
$$
$$
\int_0^T \int_{\fR^d} |\partial_{xx}m(t,x)|^p dx dt 
:=\int_0^T \int_{\fR^d} \sum_{i,j} |m_{x^ix^j}(t,x)|^p dx dt< \infty.
$$
\end{definition}
\begin{theorem}
				\label{main thm 2}
Assume that the coefficients $\bar a^{ij}(\omega,t)$ is nonnegative symmetric matrix-valued predictable and satisfies the following ellipticity condition
\begin{align*}
\kappa |\xi|^2 \leq \bar a^{ij}(\omega, t) \xi^i \xi^j \leq K |\xi|^2 \qquad  \forall (\omega, t,\xi) \in \Omega \times [0,\infty) \times \fR^d.
\end{align*}
Let $2 \leq r \leq p <\infty$, $T \in (0,\infty)$, $f \in \bL_{p,r}(T)$ and $g \in \tilde \bH^1_{p,r}(T,l_2)$, and $u$ be the solution to \eqref{main eqn} in $\bH_{p,r}^2(T)$.
Then the $r$-th moment of the solution $u$ is in $W_{p/r}^{1,2}\left((0,T) \times \fR^d \right)$.
More precisely, defining $m^r(t,x):=\bE|u(t,x)|^r$, we have
\begin{align}
								\notag
&\int_0^T \int_{\fR^d} |m^r(t,x)|^{p/r} dx dt
+\int_0^T \int_{\fR^d} |\partial_t m^r(t,x)|^{p/r} dx dt
+\int_0^T \int_{\fR^d} |\partial_{x x}m^r(t,x)|^{p/r} dx dt \\
								\label{main est 2}
&\leq N\left( \|f\|^p_{\bL_{p,r}(T)} + \|g\|^p_{\bH^1_{p,r}(T)}\right),
\end{align}
where $N=N(d,p,r,\kappa,K,T)$. 
\end{theorem}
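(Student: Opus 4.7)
\textbf{Proof plan for Theorem \ref{main thm 2}.}
The plan is to use Theorem \ref{main thm} to secure $u\in\bH^2_{p,r}(T)$, then derive explicit pointwise-in-$(t,x)$ formulas for $\partial_t m^r$ and $\partial_{xx} m^r$ by applying It\^o's formula to $|u|^r$, and finally control the resulting expressions using H\"older and Young inequalities in $\omega$. The first term in \eqref{main est 2} is trivial since
$$\int_0^T\!\!\int_{\fR^d}|m^r(t,x)|^{p/r}\,dx\,dt = \|u\|_{\bL_{p,r}(T)}^{p}$$
is already bounded by the RHS via \eqref{main est}, so the real work is in the parabolic derivatives.

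Because for fixed $x$ the process $u(\cdot,x)$ is only distribution-valued, I would first mollify in the spatial variable: let $\rho_\varepsilon$ be a standard mollifier on $\fR^d$ and set $u^\varepsilon:=u\ast_x\rho_\varepsilon$, $f^\varepsilon:=f\ast_x\rho_\varepsilon$, $g^\varepsilon:=g\ast_x\rho_\varepsilon$. Since $\bar a^{ij}(\omega,t)$ is $x$-independent, $u^\varepsilon$ solves \eqref{main eqn} with data $(f^\varepsilon,g^\varepsilon)$ and is smooth in $x$; in particular $u^\varepsilon(\cdot,x)$ is a continuous real-valued semimartingale for each $x$. As $r\geq 2$, the map $s\mapsto|s|^r$ is $C^2$ on $\fR$, and applying It\^o's formula to $|u^\varepsilon(t,x)|^r$ and then taking expectations yields
$$\partial_t m^{r,\varepsilon}(t,x) = \bE\Bigl[r|u^\varepsilon|^{r-2}u^\varepsilon(\bar a^{ij}u^\varepsilon_{x^ix^j}+f^\varepsilon)+\tfrac{r(r-1)}{2}|u^\varepsilon|^{r-2}|g^\varepsilon|_{l_2}^{2}\Bigr],$$
while differentiating twice in $x$ under the expectation (legitimized by the $C^\infty$-smoothness in $x$ of $u^\varepsilon$ and dominated convergence) gives
$$\partial_{x^ix^j}m^{r,\varepsilon}(t,x) = \bE\bigl[r(r-1)|u^\varepsilon|^{r-2}u^\varepsilon_{x^i}u^\varepsilon_{x^j}+r|u^\varepsilon|^{r-2}u^\varepsilon u^\varepsilon_{x^ix^j}\bigr].$$

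Next I would bound these pointwise in $(t,x)$ via H\"older in $\omega$ with conjugate pairs $(r/(r-2),r/2)$ and $(r/(r-1),r)$, e.g.
$$\bE[|u^\varepsilon|^{r-2}|u^\varepsilon_x|^2]\leq m^{r,\varepsilon}(t,x)^{(r-2)/r}\bigl(\bE|u^\varepsilon_x|^r\bigr)^{2/r},$$
and analogously for the other mixed terms. Raising to the $(p/r)$-th power and applying Young's inequality $a^{(r-2)/r}b^{2/r}\leq\tfrac{r-2}{r}a+\tfrac{2}{r}b$ (and its $((r-1)/r,1/r)$ analogue), then integrating over $(0,T)\times\fR^d$, produces
$$\int_0^T\!\!\int_{\fR^d}\bigl(|\partial_t m^{r,\varepsilon}|^{p/r}+|\partial_{xx}m^{r,\varepsilon}|^{p/r}\bigr)dx\,dt \leq N\bigl(\|u^\varepsilon\|_{\bH^2_{p,r}(T)}^{p}+\|f^\varepsilon\|_{\bL_{p,r}(T)}^{p}+\|g^\varepsilon\|_{\bL_{p,r}(T,l_2)}^{p}\bigr),$$
which, by Theorem \ref{main thm} applied to $u^\varepsilon$ together with $\|f^\varepsilon\|\leq\|f\|$ and $\|g^\varepsilon\|\leq\|g\|$, is bounded uniformly in $\varepsilon$ by the RHS of \eqref{main est 2}.

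Finally I would pass to the limit $\varepsilon\downarrow 0$. Standard mollifier estimates give $u^\varepsilon\to u$ in $\bH^2_{p,r}(T)$, $f^\varepsilon\to f$ in $\bL_{p,r}(T)$, and $g^\varepsilon\to g$ in $\bH^1_{p,r}(T,l_2)$; using $\bigl||a|^r-|b|^r\bigr|\leq r(|a|+|b|)^{r-1}|a-b|$ and H\"older then gives $m^{r,\varepsilon}\to m^r$ in $L_{p/r}((0,T)\times\fR^d)$. The uniform $L_{p/r}$-bound on $(\partial_t m^{r,\varepsilon},\partial_{xx}m^{r,\varepsilon})$ lets me extract weak limits, and strong convergence of $m^{r,\varepsilon}$ identifies those limits with the distributional derivatives $\partial_t m^r$ and $\partial_{xx}m^r$; weak lower semicontinuity of the $L_{p/r}$-norm then delivers \eqref{main est 2}. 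The step I expect to be the most delicate is precisely this identification, since it requires matching the uniform-in-$\varepsilon$ estimate with the strong convergence $m^{r,\varepsilon}\to m^r$; the pointwise H\"older/Young bounds and the mollified It\^o computation are by comparison routine.
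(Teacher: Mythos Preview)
Your approach is essentially the paper's: spatial mollification, It\^o's formula for $|u^\varepsilon|^r$, H\"older/Young in $\omega$ to bound the resulting products, and passage to the limit $\varepsilon\downarrow 0$. The only cosmetic difference is in the final step---the paper invokes Fatou's lemma after pointwise convergence of the explicit formulas, whereas you use weak $L_{p/r}$-compactness and lower semicontinuity; note that your route requires $p/r>1$ for reflexivity, so at the endpoint $p=r$ you should fall back on Fatou as the paper does.
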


\mysection{Kernel Estimates}

In this section, we prove kernel estimates related to deterministic coefficients $a^{ij}(t)$. These estimates might be well-known. 
However, we could not find an exact reference which fit to our setting. 
Moreover, if we consider random coefficients, then two different types of  H\"ormander's condition appear (see Remark \ref{rmk 20200129} below).
Thus we give the details of kernel estimates for the completeness of the paper.

Assume that there exists a  $d \times d$ nonnegative symmetric matrix-valued (non-random) function $a(t)=\left(a^{ij}(t) \right)$ on $[0, \infty) $ satisfies the following ellipticity condition
\begin{align*}
\kappa |\xi|^2 \leq a^{ij}(t) \leq K |\xi|^2 \qquad  \forall (t,\xi) \in [0,\infty) \times \fR^d.
\end{align*}
For $t>\rho$, we define the following $d \times d$ matrices
$$
A_{ t \rho}= \int_\rho^t a(\eta) d\eta, \quad B_{t \rho}= A^{-1}_{t \rho}, \quad \text{and} \quad \sigma_{t \rho}=A^{1/2}_{t \rho}.
$$
It is easy to check that
\begin{align*}
\kappa (t-\rho) |\xi|^2 \leq A_{t \rho}^{ij} \xi^i \xi^j \leq K (t-\rho)|\xi|^2 \qquad  \forall t>\rho, \quad \forall  \xi \in  \fR^d,
\end{align*}
\begin{align*}
K^{-1} (t-\rho)^{-1} |\xi|^2 \leq B_{t\rho}^{ij} \xi^i \xi^j \leq \kappa^{-1} (t-\rho)^{-1} |\xi|^2 \qquad  \forall t>\rho, \quad \forall  \xi \in  \fR^d,
\end{align*}
and
\begin{align}
					\label{2019081601}
\kappa (t-\rho)^{1/2} |\xi|^2 \leq \sigma_{t\rho}^{ij} \xi^i \xi^j \leq K (t-\rho)^{1/2} |\xi|^2 \qquad  \forall t>\rho, \quad \forall  \xi \in  \fR^d.
\end{align}

Define
\begin{align}
					\label{ker def}
p(t,\rho,x) := 1_{\rho<t} (4\pi)^{-d/2} (\det B_{t \rho} )^{1/2} \exp ( - (B_{t \rho} x, x)/4 ).
\end{align}
Then
$$
p(t,\rho,x) = 1_{\rho<t} (4\pi)^{-d/2} (\det \sigma_{t \rho})^{-1} \exp ( - ( |\sigma_{t\rho}^{-1} x|^2/4 )
$$
and by \eqref{2019081601}
\begin{align*}
\kappa^d (t-\rho)^{d/2} \leq |\det \sigma_{t \rho} | \leq K^d (t-\rho)^{d/2}
\end{align*}
Thus for any $d$-dimensional multi-index $\gamma$, there exist positive constants $N$ and $c_0$ such that
\begin{align}
							\label{ker est 1}
\left|D_x^\gamma p(t,\rho,x)  \right| \leq N \left(  (t-\rho)^{-d/2 -\frac{\gamma}{2}} \right) \exp\left( -c_0 (t-\rho)^{-1}|x|^2\right) \qquad \forall t> \rho, ~ x \in \fR^d,
\end{align}
where $N=N(d,\gamma , \kappa, K)$ and $c_0=c_0(d,\gamma, \kappa)$.

Moreover, by the Fourier transform, 
$$
\cF \left[ D^\gamma p(t,\rho,\cdot) \right] (\xi) =  i^{|\gamma|}\xi^\gamma \exp \left(  -A^{ij}_{t \rho} \xi^i \xi^j \right)
$$
and
\begin{align*}
\cF \left[ D_tD^\gamma p(t,\rho,\cdot) \right] (\xi) 
&=  i^{|\gamma|}\xi^\gamma a(t)^{ij} \xi^i \xi^j \exp \left(  -A^{ij}_{t \rho} \xi^i \xi^j \right)  \\
&= a^{ij}(t) \cF \left[ D^\gamma p_{x^ix^j}(t,\rho,\cdot) \right] (\xi).
\end{align*}
Therefore
\begin{align}
						\label{ker est 2}
\left| D_t D_x^\gamma p(t,\rho,x)  \right| \leq N \left(  (t-\rho)^{-\frac{d}{2} -\frac{\gamma}{2} -1} \right) \exp\left( -c_0 (t-\rho)^{-1}|x|^2\right) \qquad \forall t> \rho, ~ x \in \fR^d.
\end{align}

Next we introduce cylinders in $\fR^{d+1}$ and obtain H\"ormander's type kernel estimates for $p(t,\rho,x)$ based on these cylinders. 
For $c>0$ and $(t_0,x_0 ) \in \fR^{d+1}$, we define the cylinder whose center is $(t_0,x_0)$ and radius is $c$ in $\fR^{d+1}$ as 
\begin{align*}
Q_c(t_0,x_0) 
&:= \left(t_0 - c^2, t_0+ c^2\right) \times B_c(x_0) \\
&:= \left(t_0 - c^2, t_0+ c^2\right) \times  \{ x \in \fR^d : |x-x_0| < c\}.
\end{align*}
Note that the kernel $p(t,\rho,x)$ is defined on  $\fR^{d+2}$ due to the indicator $1_{ \rho < t }$, i.e. 
$$
p(t,\rho,x) := (4\pi)^{-d/2} (\det B_{t \rho} )^{1/2} \exp ( - (B_{t \rho} x, x)/4 ) \quad \text{if} \quad \rho < t
$$
and
$$
p(t,\rho,x) := 0 \quad \text{if} \quad t \leq \rho.
$$
\begin{lemma}
					\label{hor lem}
Let  $\gamma$ be a $d$-dimensional multi-index, $ r \in [1,\infty)$, $c \in (0,\infty)$, $(t_0,x_0) \in \fR^{d+1}$,  $(t,x), (s,y) \in Q_c(t_0,x_0)$, and
$$
\cA(\rho,t_0,x_0) := \left\{ z \in \fR^{d} :  |\rho -t_0|^{1/2}   + |z-x_0| \geq 8c  \right\}.
$$
Assume
\begin{align}
						\label{2019110301}
r |\gamma|=2.
\end{align}
Then there exists a positive constant $N=N(d, \kappa, K, r, |\gamma|)$ such that 
\begin{align*}
\int_{  -\infty }^\infty  \left[ \int_{A(\rho,t_0,x_0)} | (D_x^\gamma p)(t,\rho,x-z) -  (D_x^\gamma p)(s,\rho,y-z)| dz \right]^{r} d\rho 
\leq N.
\end{align*}
\end{lemma}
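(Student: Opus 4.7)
The plan is a H\"ormander-type argument: decompose the kernel difference into a $t$-jump and an $x$-jump via the fundamental theorem of calculus, apply the Gaussian kernel bounds \eqref{ker est 1}--\eqref{ker est 2}, and split the $\rho$-integration into a ``far'' regime $|\rho - t_0| \geq 16c^2$ and a ``near'' regime $|\rho - t_0| < 16c^2$. Concretely, I would write
$$
(D_x^\gamma p)(t,\rho,x-z) - (D_x^\gamma p)(s,\rho,y-z) = A(\rho,z) + B(\rho,z),
$$
where $A$ is the time increment from $s$ to $t$ (with spatial argument $x-z$ fixed) and $B$ is the spatial increment from $y-z$ to $x-z$ (with time $s$ fixed). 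Since $(t,x),(s,y)\in Q_c(t_0,x_0)$ gives $|t-s|\leq 2c^2$ and $|x-y|\leq 2c$, the bounds \eqref{ker est 1}--\eqref{ker est 2} combined with the mean value theorem yield
$$
|A| \leq Nc^2 (\tau^*-\rho)^{-d/2-|\gamma|/2-1} e^{-c_0 |x-z|^2/(\tau^*-\rho)}, \qquad |B| \leq Nc\,(s-\rho)^{-d/2-(|\gamma|+1)/2} e^{-c_0 |w^*-z|^2/(s-\rho)},
$$
for some $\tau^*\in[s\wedge t, s\vee t]$ and $w^*$ on the segment joining $x$ and $y$.

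In the far regime, $|t-t_0|, |s-t_0| \leq c^2 \leq |\rho-t_0|/16$ makes $(\tau^*-\rho)$ and $(s-\rho)$ comparable to $(t_0-\rho)$, and integrating the Gaussians freely over $\fR^d$ produces
$$
\int_{\cA}|A|\,dz \leq Nc^2 (t_0-\rho)^{-|\gamma|/2-1}, \qquad \int_{\cA}|B|\,dz \leq Nc\,(t_0-\rho)^{-(|\gamma|+1)/2}.
$$
Raising to the $r$-th power and integrating $\rho$ over $(-\infty,\, t_0 - 16c^2]$, the integrands decay as $(t_0-\rho)^{-(1+r)}$ and $(t_0-\rho)^{-(1+r/2)}$ respectively, both integrable since $r\geq 1$; an elementary scaling yields a bound of the form $Nc^{2-r|\gamma|} = N$ under the hypothesis $r|\gamma|=2$.

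In the near regime, $|\rho-t_0|^{1/2}<4c$ forces $|z-x_0| \geq 4c$ on $\cA$, so that $|x-z|,\,|y-z|,\,|w^*-z| \geq |z-x_0|/2$ (using $|x-x_0|,\,|y-x_0| \leq c \leq |z-x_0|/4$). Setting $\lambda = (\tau^*-\rho)^{1/2} \leq \sqrt{17}\,c$ and substituting $v = (z-x_0)/\lambda$ in the Gaussian integral, the tail bound $\int_{|v|\geq a}e^{-c_0|v|^2/16}\,dv \leq Ne^{-c_0 a^2/32}$ with $a = 4c/\lambda \geq 4/\sqrt{17}$ gives
$$
\int_{\cA}|A|\,dz \leq Nc^2 \lambda^{-|\gamma|-2} e^{-c'(c/\lambda)^2} \leq Nc^{-|\gamma|},
$$
since $u \mapsto u^{|\gamma|+2}e^{-c'u^2}$ is bounded on $[1/\sqrt{17},\infty)$ and $u := c/\lambda$ lies in that set. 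The analogous bound holds for $|B|$. Raising to the $r$-th power and integrating over the $\rho$-interval of length $\leq 17c^2$ again gives $Nc^{2-r|\gamma|}=N$.

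The principal obstacle is the near regime: the prefactor $(\tau^*-\rho)^{-d/2-|\gamma|/2-1}$ is non-integrable as $\rho\uparrow\tau^*$, so one cannot drop the Gaussian weight in $z$. The geometric lower bound $|x-z| \gtrsim |z-x_0| \geq 4c$ must be used to convert the Gaussian factor into $\exp(-c'(c/\lambda)^2)$, which absorbs the polynomial singularity. The dimensionless final constant emerges precisely because the hypothesis $r|\gamma|=2$ balances the $c^{2r}$ (resp.\ $c^{r}$) factor from $|t-s|^r$ (resp.\ $|x-y|^r$) against the $c^{-r|\gamma|}$ that comes out of both regime calculations.
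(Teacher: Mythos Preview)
Your approach is essentially the paper's: decompose the kernel difference via the fundamental theorem of calculus, split $\rho$ into near and far zones, and in the near zone exploit the constraint $|z-x_0|\geq 4c$ on $\cA$ so that the Gaussian tail absorbs the polynomial singularity. There is, however, one genuine gap. The kernel $p$ carries the indicator $1_{\rho<t}$, so $\tau\mapsto (D_x^\gamma p)(\tau,\rho,x-z)$ is not differentiable across $\tau=\rho$. Your mean-value bound for the time increment
\[
|A|\;\leq\; Nc^2(\tau^*-\rho)^{-d/2-|\gamma|/2-1}\exp\bigl(-c_0|x-z|^2/(\tau^*-\rho)\bigr)
\]
is therefore valid only when $\rho<s\wedge t$. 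In the far regime this is automatic (since $|\rho-t_0|\geq 16c^2$ and $|s-t_0|,|t-t_0|<c^2$ force $\rho\notin[s\wedge t,\,s\vee t]$), but in the near regime the range $s\wedge t\leq\rho<s\vee t$ does occur, and there $A=(D_x^\gamma p)(s\vee t,\rho,x-z)$ outright, with no factor of $|t-s|$ to spend.

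The paper deals with this by isolating a third term $1_{s\leq\rho<t}(D_x^\gamma p)(t,\rho,\cdot)$ alongside the time- and space-increment pieces (assuming without loss of generality $t\geq s$). That extra term is controlled by precisely your near-zone mechanism: from \eqref{ker est 1} together with $|x-z|\geq 3c$ on $\cA$ one gets
\[
\int_{\cA}\bigl|(D_x^\gamma p)(t,\rho,x-z)\bigr|\,dz
\;\leq\; N(t-\rho)^{-|\gamma|/2}\exp\bigl(-c'c^2/(t-\rho)\bigr)
\;\leq\; Nc^{-|\gamma|}
\]
(using $t-\rho\leq |t-s|\leq 2c^2$), and then $\int_s^t\bigl(Nc^{-|\gamma|}\bigr)^r\,d\rho\leq Nc^{2-r|\gamma|}=N$ by the hypothesis \eqref{2019110301}. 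Once this term is accounted for, your sketch is complete and coincides with the paper's proof.
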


\begin{proof}
We may assume that $t \geq s$ without loss of generality.
Moreover we only prove the case $t>s$ since the proof of the case $t=s$ is simpler. 

Set
\begin{align}
					\notag
I_1(t_0,c)
&:= \left\{ \rho \in \fR :  |\rho -t_0|  > (4c)^2  \right\} \\
								\label{2019110101}
&\subset 
 \left\{ \rho \in \fR :  |\rho -t|  > 4c^2  \right\}
 \bigcap  \left\{ \rho \in \fR :  |\rho -s|  > 4c^2  \right\}
\end{align}
and
\begin{align*}
I_2(t_0,c)
&:= \left\{ \rho \in \fR  :  |\rho -t_0| \leq (4c)^2  \right\} .
\end{align*}
Then obviously,
$$
\fR = I_1(t_0,c) \cup I_2(t_0,c)
$$
and for all $\rho \in I_2(t_0,c)$, 

\begin{align}
						\notag
&\cA(\rho, t_0, x_0)  \subset    \left\{ z \in \fR^{d} :  |z-x_0| \geq 4c  \right\} \\
									\label{2019110102}
&\subset   \bigcap_{ \theta \in [0,1] } \left\{ z \in \fR^{d} :  |z- (\theta x + (1-\theta)y) | \geq 2c  \right\}  .
\end{align}
By the fundamental theorem of calculus and the equalities
$$
1_{\rho < t} = 1_{\rho <t} 1_{\rho < s } + 1_{\rho <t}1_{\rho \geq s}
= 1_{\rho < s }  + 1_{ s \leq \rho < t},
$$
we have 
\begin{align*}
&\left|(D^\gamma_x p)(t,\rho,x-z) -   (D^\gamma_xp)(s,\rho,y-z) \right| \\
&=\left| 1_{\rho<t} (D^\gamma_x p)(t,\rho,x-z) -  1_{\rho<s} (D^\gamma_xp)(s,\rho,y-z) \right| \\
&=|1_{\rho<s} (D^\gamma_x p) (s,\rho,y-z) -  1_{\rho<s} (D^\gamma_xp) (t,\rho,y-z) + 1_{ s \leq \rho<t } (D^\gamma_x p)(t,\rho,y-z)  \\
&\qquad +1_{\rho<t} (D_x^\gamma p)(t,\rho,y-z) -  1_{\rho<t} (D^\gamma_x p)(t,\rho,x-z) |\\
&=\bigg|1_{\rho<s} \int_0^1  \frac{d}{d\theta} \left( (D^\gamma_x p)( \theta t + (1-\theta)s,\rho,y-z)  \right)d \theta + 1_{s \leq \rho <t} (D^\gamma_x p)(t,\rho,y-z)  \\
&\qquad +1_{\rho <t}  \int_0^1 \frac{d}{d\theta} \left( (D_x^\gamma p) (t,\rho,\theta x+(1-\theta)y-z)  \right) d\theta\bigg|
\end{align*}
Moreover, by \eqref{ker est 1},  \eqref{ker est 2}, and Fubini's theorem,  the above term is less than or equal to 
\begin{align}
										\notag
& N 1_{\rho<s} \left| |t-s| |s-\rho|^{-d/2-|\gamma|/2 -1} \exp\left( -c_0 |s-\rho|^{-1} |y-z|^{2} \right) \right| \\
								\notag
& + N1_{ s\leq \rho <t} \left|  |t-\rho|^{-d/2-|\gamma|/2} \exp\left( -c_0 |t-\rho|^{-1} |y-z|^{2} \right) \right| \\
								\label{20200113}
&+  N 1_{\rho<t} \int_0^1 |x-y| |t-\rho|^{-d/2-(|\gamma|+1)/2} \exp\left( -c_0 |t-\rho|^{-1} \left(|\theta x +(1-\theta) y-z|   \right)^{2} \right)  d\theta.
\end{align}
It is obvious that
\begin{align*}
I_1(t_0,c) \bigcap   \left\{ \rho \in \fR :    t > \rho \geq s  \right\} = \emptyset. 
\end{align*}
Then by \eqref{2019110101}, simple change of variables, the Fubini theorem,  \eqref{20200113}, and \eqref{2019110301}, 
\begin{align*}
&\int_{ I_1}  \left[ \int_{A(\rho,t_0,x_0)} | (D_x^\gamma p)(t,\rho,x-z) -  (D_x^\gamma p)(s,\rho,y-z)| dz \right]^{r} d\rho  \\
&\leq \int_{ I_1}  \left[ \int_{\fR^d} | (D_x^\gamma p)(t,\rho,x-z) -  (D_x^\gamma p)(s,\rho,y-z)| dz \right]^{r} d\rho  \\
&\leq N \left( \int_{4c^2}^\infty  |t-s|^r \rho^{-r (|\gamma|/2 + 1) } + |x-y|^r \rho^{-r(|\gamma|+1)/2}  \right) \\
&\leq N \left( |t-s|^r c^{-r (|\gamma| + 2)  + 2} + |x-y|^r c^{-r(|\gamma|+1)  +2} \right) \\
&\leq N \left(  c^{-r |\gamma|  + 2} \right) \leq N(d,\kappa,K,r, |\gamma|).
\end{align*}
On the other hand, by \eqref{2019110102}, \eqref{20200113}, simple change of variables, and th Fubini theorem, 
\begin{align*}
&\int_{ I_2}  \left[ \int_{A(\rho,t_0,x_0)} |(D_x^\gamma p)(t,\rho,x-z) -   (D_x^\gamma p)(s,\rho,y-z)| dz \right]^{r} d\rho  \\
&\leq N  |t-s|^r \int_{|\rho -t_0| \leq (4c)^2} |s-\rho|^{- r(|\gamma|+2)/2} \left[ \int_{|z| \geq 4|s-\rho|^{-1/2}c}   \exp\left( -c_0 |z|^{2} \right) dz\right]^r d\rho \\
&\quad +N   \int_{|\rho -t_0| \leq (4c)^2} |t-\rho|^{-r |\gamma|/2}  \left[ \int_{|z| \geq 4|t-\rho|^{-1/2}c} \left|  \exp\left( -c_0 |z|^{2} \right) \right| dz \right]^r d\rho \\
&\quad +N|x-y|^r \int_{|\rho -t_0| \leq (4c)^2} |t-\rho|^{- r(|\gamma|+1)/2}   \left[ \int_{|z| \geq 4|t-\rho|^{-1/2}c} \left|  \exp\left( -c_0 |z|^{2} \right) \right| dz \right]^r d\rho \\
&\leq N  |t-s|^r c^{-r(|\gamma|+2) +2}  \left[ \int_{\fR^d}   |z|^{(|\gamma|+2)}\exp\left( -c_0 |z|^{2} \right)  dz\right]^r  \\
&\quad +N   c^{-r|\gamma| +2} \left[ \int_{\fR^d}  |z|^{|\gamma|} \left|  \exp\left( -c_0 |z|^{2} \right) \right| dz \right]^r \\
&\quad +N|x-y|^r  c^{-r(|\gamma|+1) +2}   \left[ \int_{\fR^d} |z|^{|\gamma|+1} \left|  \exp\left( -c_0 |z|^{2} \right) \right| dz \right]^r d\rho \\
&\leq N(d,\kappa,K,r, |\gamma|).
\end{align*}
The lemma is proved. 
\end{proof}
Since 
$$
\fR^{d+1} \setminus Q_{8c}(t_0,x_0)  
 \subset \left\{ (\rho,z) \in \fR^{d+1} :  |\rho -t_0|^{1/2}   + |z-x_0|  \geq 8c  \right\},
$$
one can easily check that the following two corollaries are particular cases of Lemma \ref{hor lem}.
\begin{corollary}[H\"ormander's condition]
						\label{cor hor 1}
There exists a positive constant $N=N(d, \kappa, K)$ such that 
\begin{align*}
\int_{   \fR^{d+1} \setminus Q_{8c}(t_0,x_0)  } | p_{xx}(t,\rho,x-z) -  p_{xx}(s,\rho,y-z)| dz d\rho \leq N
\end{align*}
for all $c \in (0,\infty)$, $(t_0,x_0) \in (0,\infty) \times \fR^d$, and  $(t,x), (s,y) \in Q_c(t_0,x_0)$.
\end{corollary}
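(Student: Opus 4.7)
The plan is to deduce the estimate directly from Lemma \ref{hor lem} by specializing the parameters. Since the integrand on the left-hand side is $|p_{xx}(t,\rho,x-z) - p_{xx}(s,\rho,y-z)|$, the natural choice is $|\gamma|=2$; the relation $r|\gamma|=2$ then forces $r=1$. With these choices, Lemma \ref{hor lem} supplies the bound
\[
\int_{-\infty}^\infty \int_{\cA(\rho,t_0,x_0)} |p_{xx}(t,\rho,x-z) - p_{xx}(s,\rho,y-z)|\, dz\, d\rho \leq N,
\]
with $N=N(d,\kappa,K)$, valid for all $(t,x),(s,y) \in Q_c(t_0,x_0)$. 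So all the analytic work (splitting into far-in-time and near-in-time regions, using the Gaussian kernel bounds \eqref{ker est 1}--\eqref{ker est 2}, and applying the fundamental theorem of calculus in $t$ and $x$) is already contained in that lemma.

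The only remaining step is the geometric inclusion
\[
\fR^{d+1} \setminus Q_{8c}(t_0,x_0) \subset \left\{(\rho,z) \in \fR^{d+1} : z \in \cA(\rho,t_0,x_0)\right\},
\]
which I would verify as follows. If $(\rho,z) \notin Q_{8c}(t_0,x_0)$, then either $|\rho - t_0| \geq (8c)^2$, in which case $|\rho - t_0|^{1/2} \geq 8c$ and so $z \in \cA(\rho,t_0,x_0)$ automatically, or $|z - x_0| \geq 8c$, in which case again $z \in \cA(\rho,t_0,x_0)$. Since the integrand is nonnegative, restricting the integration domain in the conclusion of Lemma \ref{hor lem} (with $r=1$) from $\{(\rho,z) : z \in \cA(\rho,t_0,x_0)\}$ to the smaller set $\fR^{d+1} \setminus Q_{8c}(t_0,x_0)$ can only decrease the integral, which yields the stated bound.

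Because the heavy lifting has already been performed in Lemma \ref{hor lem}, there is no real obstacle here; the corollary is essentially a restatement of the lemma in the more geometric language of parabolic cylinders, which is the form needed for a subsequent Calder\'on--Zygmund/BMO-type argument. If anything, the only point to be slightly careful about is checking that $r=1$ is an admissible value in Lemma \ref{hor lem} (which it is, since the lemma is stated for $r \in [1,\infty)$) and that the constant indeed depends only on $d$, $\kappa$, $K$ (which follows from the dependence $N(d,\kappa,K,r,|\gamma|)$ in Lemma \ref{hor lem} upon fixing $r=1$ and $|\gamma|=2$).
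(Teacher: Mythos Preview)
Your proof is correct and follows exactly the paper's approach: the paper simply observes the inclusion $\fR^{d+1}\setminus Q_{8c}(t_0,x_0)\subset\{(\rho,z):|\rho-t_0|^{1/2}+|z-x_0|\geq 8c\}$ and then says both corollaries are particular cases of Lemma~\ref{hor lem}. Your choice $r=1$, $|\gamma|=2$ and your verification of the geometric inclusion are precisely what is intended.
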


\begin{corollary}[Stochastic H\"ormander's condition]
					\label{cor hor 2}
There exists a positive constant $N=N(d, \kappa, K)$ such that 
\begin{align*}
\int_{  -\infty }^\infty  \left[ \int_{(\rho, z) \in \fR^{d+1} \setminus Q_{8c}(t_0,x_0)  } \left|  p_x(t,\rho,x-z) -    p_x (s,\rho,y-z) \right| dz \right]^{2} d\rho 
\leq N
\end{align*}
for all $c \in (0,\infty)$, $(t_0,x_0) \in (0,\infty) \times \fR^d$, and  $(t,x), (s,y) \in Q_c(t_0,x_0)$.
\end{corollary}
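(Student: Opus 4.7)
The plan is to observe that Corollary \ref{cor hor 2} is the specialization of Lemma \ref{hor lem} in which the multi-index $\gamma$ is chosen so that $|\gamma| = 1$ (because we are looking at $p_x$) and the outer exponent is $r = 2$. These choices satisfy the hypothesis $r|\gamma| = 2$ of Lemma \ref{hor lem}, so the lemma applies directly to each component of $p_x$, and summing over the $d$ components of the gradient absorbs a dimensional factor into the constant $N = N(d,\kappa,K)$.

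The only remaining point is a domain-of-integration comparison: I need to check that, for each fixed $\rho$, the $z$-slice of $\fR^{d+1}\setminus Q_{8c}(t_0,x_0)$ is contained in the set
$$
\cA(\rho,t_0,x_0) = \left\{ z \in \fR^d : |\rho-t_0|^{1/2} + |z-x_0| \geq 8c\right\}
$$
that appears in Lemma \ref{hor lem}. This is an elementary case split: if $|\rho-t_0| \geq (8c)^2$ then $|\rho-t_0|^{1/2}\geq 8c$, so every $z\in\fR^d$ lies in $\cA(\rho,t_0,x_0)$; if instead $|\rho-t_0| < (8c)^2$, then $(\rho,z)\notin Q_{8c}(t_0,x_0)$ forces $|z-x_0|\geq 8c$, which again puts $z$ into $\cA(\rho,t_0,x_0)$. (The paper even records this inclusion explicitly right before the statement of the corollary.)

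Combining these two observations, the inner integral in Corollary \ref{cor hor 2} is pointwise in $\rho$ dominated by the inner integral in Lemma \ref{hor lem} (over the larger set $\cA(\rho,t_0,x_0)$), and the outer $d\rho$-integrals agree, so the desired bound follows immediately with a constant of the same dependence $N = N(d,\kappa,K)$. There is no genuine obstacle here; Lemma \ref{hor lem} was stated in precisely the generality needed to contain both H\"ormander-type estimates (Corollaries \ref{cor hor 1} and \ref{cor hor 2}) as special cases, the deterministic version corresponding to $(|\gamma|,r) = (2,1)$ and the stochastic version to $(|\gamma|,r) = (1,2)$.
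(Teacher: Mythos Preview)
Your proposal is correct and matches the paper's own approach exactly: the paper states just before the corollaries that the inclusion $\fR^{d+1}\setminus Q_{8c}(t_0,x_0)\subset\{(\rho,z):|\rho-t_0|^{1/2}+|z-x_0|\geq 8c\}$ makes both Corollary~\ref{cor hor 1} and Corollary~\ref{cor hor 2} immediate special cases of Lemma~\ref{hor lem}, with the stochastic version corresponding precisely to $(|\gamma|,r)=(1,2)$.
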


\begin{remark}
							\label{rmk 20200129}
Assume that our coefficients $a^{ij}(\omega,t)$ are random, i.e.  
\begin{align*}
\kappa |\xi|^2 \leq a^{ij}(\omega,t) \xi^i \xi^j \leq K |\xi|^2 \qquad  \forall (\omega, t,\xi) \in \Omega \times [0,\infty) \times \fR^d.
\end{align*}
Then applying Corollary \ref{cor hor 1} for each $\omega$, we have
\begin{align}
									\label{20200126}
\sup_{\omega \in \Omega} \int_{   \fR^{d+1} \setminus Q_{8c}(t_0,x_0)  } | p_{xx}(\omega, t,\rho,x-z) -  p_{xx}(\omega, s,\rho,y-z)| dz d\rho \leq N
\end{align}
for all $c \in (0,\infty)$, $(t_0,x_0) \in (0,\infty) \times \fR^d$, and  $(t,x), (s,y) \in Q_c(t_0,x_0)$.
Moreover, our random kernel $p(\omega,t,\rho,x)$ satisfies \eqref{ker est 1} and \eqref{ker est 2} uniformly for $\omega$, i.e. 
\begin{align*}
\left|D_x^\gamma p(\omega,t,\rho,x)  \right| \leq N \left(  (t-\rho)^{-d/2 -\frac{\gamma}{2}} \right) \exp\left( -c_0 (t-\rho)^{-1}|x|^2\right) 
\end{align*}
and
\begin{align*}
\left| D_t D_x^\gamma p(\omega, t,\rho,x)  \right| \leq N \left(  (t-\rho)^{-\frac{d}{2} -\frac{\gamma}{2} -1} \right) \exp\left( -c_0 (t-\rho)^{-1}|x|^2\right)
\end{align*}
for all $\omega \in \Omega,~ t> \rho, ~ x \in \fR^d$.
Thus following the proof of  Lemma \ref{hor lem}, we have 
\begin{align*}
&\left|(D^\gamma_x p)(\omega, t,\rho,x-z) -   (D^\gamma_x p)(\omega, s,\rho,y-z) \right| \\
& \leq N 1_{\rho<s} \left| |t-s| |s-\rho|^{-d/2-|\gamma|/2 -1} \exp\left( -c_0 |s-\rho|^{-1} |y-z|^{2} \right) \right| \\
& + N1_{ s\leq \rho <t} \left|  |t-\rho|^{-d/2-|\gamma|/2} \exp\left( -c_0 |t-\rho|^{-1} |y-z|^{2} \right) \right| \\
&+  N 1_{\rho<t} \int_0^1 |x-y| |t-\rho|^{-d/2-(|\gamma|+1)/2} \exp\left( -c_0 |t-\rho|^{-1} \left(|\theta x +(1-\theta) y-z|   \right)^{2} \right)  d\theta.
\end{align*}
By using these $\omega$-uniform estimates, one can easily prove that
\begin{align}
							\label{2020012602}
 \int_{   \fR^{d+1} \setminus Q_{8c}(t_0,x_0)  } \sup_{\omega \in \Omega} | p_{xx}(\omega,t,\rho,x-z) -  p_{xx}(\omega,s,\rho,y-z)| dz d\rho \leq N,
\end{align}
which is obviously better than \eqref{20200126}.
\end{remark}

\mysection{A maximal regularity moment estimate for second-order PDEs without stochastic noises}

In this section, we handle a random equation without stochastic noises. 
We study the following equation 
\begin{align}
						\label{det ran eqn}
				 du= \left( \bar a^{ij}(\omega, t)u_{x^ix^j}+ f(\omega)  \right)dt , \quad t \in (0,T); \quad u(0,\cdot)=0. 
\end{align}
Here we present the random parameter $\omega$ to emphasize that our coefficients $\bar a^{ij}$ and free data $f$ are random in this section.

Let $T \in (0,\infty]$. We introduce nice  subspaces of $\bH_{p,r}^\gamma(T)$ and $\tilde \bH_{p,r}^\gamma(T,l_2)$ which will be used in approximations.
We denote by $\bH_c^\infty( T )$ the space of all stochastic processes $f$ such that 
$$
f(t,x)= \sum_{i=1}^{j}1_{A_i}(\omega) 1_{B_i} (t)f^{i}(x),
$$
where $j \in \bN$, $f^{i} \in C_c^\infty(\fR^d)$, $A_i \in \rF$, and $B_i \in \cB((0,T))$ with $|B_i| < \infty$. 
Moreover, we denote by $\tilde \bH_c^\infty(T, l_2)$ the space of all stochastic processes $g=(g^1,g^2, \ldots)$ such that $g^k=0$ for all large $k$ and each $g^k$ is  of the type
$$
g^k(t,x)= \sum_{i=1}^{j(k)}1_{(\tau_{i-1},\tau_i]}(t) g^{ik}(x),
$$
where  $j(k) \in \bN$, $g^{ik} \in C_c^\infty(\fR^d)$, and $\tau_i$ are bounded stopping times with $\tau_i \leq T$ on the probability space.

Obviously, for all $ 0 < T_1 \leq T_2 \leq \infty$, $p,r \in (0,\infty]$, and $\gamma \in \bZ_+$, we have
\begin{align*}
\bH_c^\infty(T_1) \subset \bH_c^\infty(T_2) \subset \bH_c^\infty(\infty) \subset \bH_{p,r}^\gamma(\infty)
\end{align*}
and
\begin{align*}
\tilde \bH_c^\infty(T_1,l_2) \subset \tilde \bH_c^\infty(T_2,l_2) \subset  \tilde \bH_c^\infty(\infty,l_2) \subset \tilde \bH_{p,r}^\gamma(\infty,l_2).
\end{align*}
\begin{lemma}
						\label{lem dense}
\begin{enumerate}[(i)]
Let  $T \in (0,\infty]$, $p,r \in (1,\infty)$, and $\gamma \in \bZ_+$.
\item $\bH_c^\infty(T)$ is dense in $ \bH_{p,r}^\gamma(T)$.
\item $\tilde \bH_c^\infty(T,l_2)$  is dense in $\tilde \bH_{p,r}^\gamma(T,l_2)$.
\end{enumerate}

\end{lemma}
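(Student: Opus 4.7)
The overall strategy is to reduce to the case $\gamma=0$ via spatial mollification, and then use standard Bochner space density together with, for part (ii), the classical fact that elementary predictable processes are dense in predictable $L_p$ spaces.

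For the reduction from $\gamma \geq 1$ to $\gamma=0$, fix a standard nonnegative radial mollifier $\phi \in C_c^\infty(\fR^d)$ with $\int \phi = 1$ and set $\phi_\varepsilon(x) = \varepsilon^{-d}\phi(x/\varepsilon)$. Given $u \in \bH_{p,r}^\gamma(T)$, define $u^{(\varepsilon)}(\omega,t,x) := (u(\omega,t,\cdot)\ast\phi_\varepsilon)(x)$. Since $D^\alpha u^{(\varepsilon)} = (D^\alpha u)\ast \phi_\varepsilon$, Minkowski's integral inequality applied in the outer $L_p(dt\,dx)\text{-}L_r(dP)$ norm shows that $u^{(\varepsilon)} \to u$ in $\bH_{p,r}^\gamma(T)$ as $\varepsilon \downarrow 0$. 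Moreover convolution in $x$ is performed slice-by-slice in $(\omega,t)$, so predictability is preserved, giving the same reduction for $\tilde\bH_{p,r}^\gamma(T,l_2)$. A further spatial cut-off $u^{(\varepsilon)}\cdot \chi_R$ with $\chi_R \in C_c^\infty(\fR^d)$ a smooth bump equal to $1$ on $B_R(0)$ converges to $u^{(\varepsilon)}$ as $R\to\infty$ by dominated convergence, and time-truncation $1_{\{1/n < t < T - 1/n\}} u^{(\varepsilon)}$ similarly converges to $u^{(\varepsilon)}$. Thus it suffices to approximate bounded, compactly-supported-in-$x$, time-truncated elements of $\bL_{p,r}(T)$ (resp.\ $\tilde\bL_{p,r}(T,l_2)$) by elements of $\bH_c^\infty(T)$ (resp.\ $\tilde\bH_c^\infty(T,l_2)$).

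For part (i), note that $\bL_{p,r}(T) = L_p((0,T)\times\fR^d ; L_r(\Omega,\rF,P))$, a Bochner space over the target Banach space $L_r(\Omega,\rF,P)$. Simple Bochner functions $\sum_{i} 1_{C_i}(t,x)\,\xi_i$ with $C_i \in \cB((0,T))\times\cB(\fR^d)$ of finite measure and $\xi_i \in L_r(\Omega,\rF,P)$ are dense. Each $\xi_i$ can in turn be approximated in $L_r(\Omega)$ by $\rF$-simple functions $\sum_j c_{ij}\,1_{A_{ij}}$ with $A_{ij}\in\rF$, and each $C_i$ can be approximated in Lebesgue measure by a finite union of product cylinders $B_i \times U_i$ with $B_i \in \cB((0,T))$, $|B_i|<\infty$, and $U_i$ relatively compact. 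A final mollification and smooth cut-off replaces $1_{U_i}(x)$ by a $C_c^\infty(\fR^d)$-function, yielding an approximation by the required finite sums in $\bH_c^\infty(T)$.

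For part (ii), the additional constraint is predictability. Here $\tilde\bL_{p,r}(T,l_2) = L_p((0,T)\times\fR^d, \cP\otimes\cB(\fR^d) ; L_r(\Omega; l_2))$. Truncating the $l_2$-tail reduces to processes with $g^k \equiv 0$ for $k > N$, so the problem is finite-dimensional in $l_2$. For each fixed spatial test function $\psi \in C_c^\infty(\fR^d)$, the standard density result in stochastic analysis asserts that simple predictable processes of the form $\sum_i 1_{(\tau_{i-1},\tau_i]}(t)\,\eta_i$, with $\tau_i$ bounded stopping times and $\eta_i$ bounded $\rF_{\tau_{i-1}}$-measurable scalars, are dense in $L_p(\Omega\times(0,T),\cP;L_r(\Omega))$. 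Combining this with the product structure coming from the Bochner-simple approximation above (first approximating the $L_r(\Omega;l_2)$-valued predictable process by $\cP$-simple sums $\sum_i 1_{E_i}(\omega,t)\,h_i(x)$ with $h_i \in C_c^\infty(\fR^d;l_2)$, then replacing each predictable set $E_i$ by a finite disjoint union of stochastic intervals $(\tau_{i-1},\tau_i]$) gives the desired approximation by elements of $\tilde\bH_c^\infty(T,l_2)$.

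The main obstacle is the last step of part (ii): one must verify that an arbitrary predictable set can be approximated, in the mixed $L_p(dt)\text{-}L_r(dP)$ norm, by finite unions of stochastic intervals $(\tau_{i-1},\tau_i]$ with $\tau_i$ bounded stopping times, while maintaining the factorized form with a deterministic $g^{ik}(x)$. This is handled by the monotone class theorem together with the fact that the collection of finite disjoint unions of such stochastic intervals generates $\cP$ and forms an algebra whose induced $L_p$-closure is the full predictable $L_p$ space; see \cite[Theorem 1.3.21]{hytonen2016analysis} or the corresponding arguments in \cite{Krylov1999}.
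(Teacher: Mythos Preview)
Your constructive approach is correct but differs from the paper's proof, which argues by duality rather than direct approximation. Since $\bH_{p,r}^\gamma(T)$ and $\tilde\bH_{p,r}^\gamma(T,l_2)$ are reflexive with explicit dual representations (Remark~\ref{re banach}(ii)), the paper invokes Hahn--Banach together with Riesz's representation, following Krylov \cite[Theorem~3.10]{Krylov1999}: if the nice class were not dense, some nonzero functional in the dual would annihilate it, and one checks directly that any such annihilating functional must vanish. Your direct route (mollify in $x$, cut off, reduce to Bochner-simple functions, and for (ii) approximate predictable sets by finite unions of stochastic intervals via a monotone-class argument) yields explicit approximants, which can be advantageous when quantitative control is needed, but is longer and requires verifying the mixed-norm approximation at each stage. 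One notational slip to correct: writing $\tilde\bL_{p,r}(T,l_2) = L_p((0,T)\times\fR^d, \cP\otimes\cB(\fR^d); L_r(\Omega;l_2))$ is ill-formed since $\cP$ is a $\sigma$-algebra on $\Omega\times(0,T)$, not on $(0,T)\times\fR^d$; what you are actually using is that $\cP\times\cB(\fR^d)$-simple functions $\sum_i 1_{E_i\times U_i}\,c_i$ (with $E_i\in\cP$, $U_i$ bounded Borel, $c_i\in l_2$) are dense, and that each $E_i\in\cP$ can be approximated in $L_1(P\times dt)$, hence in the mixed $L_p((0,T);L_r(\Omega))$ norm by boundedness of indicators and H\"older's inequality, by finite disjoint unions of stochastic intervals.
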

\begin{proof}
It is well-known that $\tilde \bH^\infty_c(T,l_2) $ is dense in  $\tilde \bH^\gamma_p(T,l_2)$  for all $p \in (1,\infty)$ and $\gamma \in \bZ_+$ (see \cite[Theorem 3.10]{Krylov1999}).
Recall that  $\tilde \bH^\gamma_{p}(T)$, $\tilde \bH^\gamma_{p}(T,l_2)$, $\tilde \bH^\gamma_{p,r}(T)$, $\tilde \bH^\gamma_{p,r}(T,l_2)$ are reflexive Banach spaces 
 for all $T \in (0,\infty]$, $p,r \in (1,\infty)$ and $\gamma \in \bZ_+$ (Remark \ref{re banach}).
Therefore following the idea of the proof of \cite[Theorem 3.10]{Krylov1999} (an application of Riesz's theorem), 
one can also easily check that $\bH_c^\infty(T)$ and $\tilde \bH_c^\infty(T,l_2)$  are dense in $\bH_{r,p}^\gamma(T)$ and $\tilde \bH_{r,p}^\gamma(T,l_2)$ 
for all $p,r \in (1,\infty)$ and $\gamma \in \bZ_+$,  respectively.  The lemma is proved. 
\end{proof}

\begin{theorem}
					\label{thm 3-1}
Let $p \in (0,\infty]$, $r \in (1,\infty)$, and $T \in (0,\infty)$.
Assume that the coefficients $\bar a^{ij}(t)$ are nonnegative symmetric matrix-valued $\rF \times \cB((0,\infty))$-measurable and satisfy the following ellipticity condition
\begin{align*}
\kappa |\xi|^2 \leq \bar a^{ij}(\omega, t) \xi^i \xi^j \leq K |\xi|^2 \qquad  \forall (\omega, t,\xi) \in \Omega \times [0,\infty) \times \fR^d.
\end{align*}
Then for any $f \in L_p\left(\Omega, \rF ,  P; L_r\left( (0,T), \cB((0,T)) ;L_r \right) \right)$, there exists a unique solution $u$ to equation \eqref{det ran eqn} such that
\begin{align}
					\label{2019100802}
\|u\|_{L_p\left(\Omega, \rF ,  P; L_r\left( (0,T), \cB((0,T)) ;L_r \right) \right)}
\leq N_1 \|f\|_{L_p\left(\Omega, \rF ,  P; L_r\left( (0,T), \cB((0,T)) ;L_r \right) \right)},
\end{align}
and
\begin{align}
					\label{2019100802-2}
\|u_{xx}\|_{L_p\left(\Omega, \rF ,  P; L_r\left( (0,T), \cB((0,T)) ;L_r \right) \right)}
\leq N_2 \|f\|_{L_p\left(\Omega, \rF ,  P; L_r\left( (0,T), \cB((0,T)) ;L_r \right) \right)},
\end{align}
where $N_1=N_1(d,p,r,\kappa, K,T)$ and $N_2=N_2(d,p,r,\kappa, K)$. 
\end{theorem}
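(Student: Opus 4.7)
My plan is to establish estimates \eqref{2019100802}--\eqref{2019100802-2} pathwise in $\omega$ using the kernel representation from Section~3, and then to take the $L_p(\Omega)$-norm; since the pathwise constants will turn out to be independent of $\omega$, any $p\in(0,\infty]$ is admissible.

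For each fixed $\omega$, let $p(\omega,t,\rho,x)$ denote the kernel \eqref{ker def} associated with $a(\cdot)=\bar a(\omega,\cdot)$, and set
\[
u(\omega,t,x):=\int_0^t\!\!\int_{\fR^d} p(\omega,t,\rho,x-y)\,f(\omega,\rho,y)\,dy\,d\rho.
\]
The identities $\partial_t p=\bar a^{ij}(\omega,t)\,p_{x^ix^j}$ and $p(\omega,\rho+,\rho,\cdot)=\delta_0$ imply that $u(\omega,\cdot,\cdot)$ solves \eqref{det ran eqn}. Thus the problem reduces to the following deterministic statement: for any non-random coefficients $a(t)$ with ellipticity constants $(\kappa,K)$, the operators
\[
\cT_0 h(t,x):=\int_0^t\!\!\int_{\fR^d} p(t,\rho,x-y)\,h(\rho,y)\,dy\,d\rho,
\]
\[
\cT_2 h(t,x):=\int_0^t\!\!\int_{\fR^d} p_{xx}(t,\rho,x-y)\,h(\rho,y)\,dy\,d\rho
\]
are bounded on $L_r((0,T)\times\fR^d)$ for every $r\in(1,\infty)$, with operator norms $N(T)$ and $N(d,r,\kappa,K)$ respectively.

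The bound for $\cT_0$ is elementary: $p(t,\rho,\cdot)$ is a Gaussian probability density, so $\|p(t,\rho,\cdot)\|_{L_1}=1$ and Minkowski gives $\|\cT_0 h(t,\cdot)\|_{L_r}\le\int_0^t\|h(\rho,\cdot)\|_{L_r}\,d\rho$; H\"older in $\rho$ combined with Fubini then yields $\|\cT_0 h\|_{L_r((0,T);L_r)}\le T\,\|h\|_{L_r((0,T);L_r)}$. The bound for $\cT_2$ is a singular-integral argument. For the $L_2$-bound, I take the Fourier transform in $x$: since $A_{t\rho}^{ij}\xi^i\xi^j\ge\kappa(t-\rho)|\xi|^2$, the Volterra kernel $|\xi|^2 e^{-A_{t\rho}^{ij}\xi^i\xi^j}$ satisfies
\[
\sup_{t,\xi}\int_0^t|\xi|^2 e^{-A_{t\rho}^{ij}\xi^i\xi^j}\,d\rho\le\kappa^{-1},\qquad
\sup_{\rho,\xi}\int_\rho^\infty|\xi|^2 e^{-A_{t\rho}^{ij}\xi^i\xi^j}\,dt\le\kappa^{-1},
\]
so Schur's test applied frequency-by-frequency together with Plancherel produce $\|\cT_2 h\|_{L_2((0,\infty)\times\fR^d)}\le\kappa^{-1}\|h\|_{L_2}$. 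Next, Corollary~\ref{cor hor 1} supplies the parabolic H\"ormander condition for the kernel $p_{xx}$ relative to the cylinders $Q_c(t_0,x_0)$. Combining the $L_2$-bound with H\"ormander's condition, the Calder\'on--Zygmund theorem on the space of homogeneous type $((0,\infty)\times\fR^d,\text{parabolic metric},dt\,dx)$ yields weak type $(1,1)$ for $\cT_2$, and Marcinkiewicz interpolation together with duality extend the bound to $L_r$ for every $r\in(1,\infty)$ with a constant depending only on $d,r,\kappa,K$.

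Since each pathwise estimate above is uniform in $\omega$, raising to the $p/r$-power and integrating in $\omega$ (or taking essential supremum when $p=\infty$) yields \eqref{2019100802}--\eqref{2019100802-2}. Uniqueness then follows from \eqref{2019100802} applied to the difference of two solutions driven by $f\equiv 0$. The main obstacle I foresee is the $L_2$-step: the $t$-dependence of $\bar a$ (and its measurable dependence on $\omega$) rules out the usual Fourier-multiplier argument, so one must instead view $\cT_2$ as a Volterra operator in time at each fixed frequency, where the lower bound on $A_{t\rho}$ lets Schur's test absorb the non-convolution structure cleanly; the H\"ormander condition, which would otherwise require a delicate scaling argument, is already packaged in Corollary~\ref{cor hor 1}.
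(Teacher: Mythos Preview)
Your approach coincides with the paper's: fix $\omega$, apply the deterministic $L_r$-estimate (with constants depending only on $d,r,\kappa,K,T$ and hence independent of $\omega$), and then take the $L_{p/r}(\Omega)$-(quasi)norm; the paper simply cites this deterministic estimate as known (\cite{Krylov2008,kim2015parabolicps}) rather than reproving it. One caveat in your Calder\'on--Zygmund sketch: Corollary~\ref{cor hor 1} records regularity of $p_{xx}$ in the \emph{output} variable $(t,x)$ integrated over $(\rho,z)$, which yields $L_\infty\!\to\!BMO$ for $\cT_2$ (hence $r\ge 2$ by interpolation) rather than weak $(1,1)$; to reach $r\in(1,2)$ you also need the dual H\"ormander condition with the roles of $(t,x)$ and $(\rho,z)$ swapped, which follows by the same proof as Lemma~\ref{hor lem} once one observes the $\rho$-derivative analogue of \eqref{ker est 2}.
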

\begin{proof}
Note that 
$$
\|f\|^p_{L_p\left(\Omega, \rF ,  P; L_r\left( (0,T), \cB((0,T)) ;L_r \right) \right)}
= \bE \left[  \left(\int_0^T \int_{\fR^d} |f(\omega,t, x)|^r dx dt\right)^{p/r}\right] < \infty
$$
and thus
$f\in L_r( [0,T] \times \fR^d)$ $ (a.s.)$.
Therefore for almost every $\omega \in \Omega$, by the deterministic classical result (cf \cite{Krylov2008,kim2015parabolicps}),  there exists a unique solution $u(\omega,t,x)$ to \eqref{det ran eqn} such that
\begin{align}
						\label{2019100803}
u(\omega,t,x) = \int_0^t \int_{\fR^d} p(\omega,s,x-y)f(\omega,s,y) dyds,
\end{align}
\begin{align}
						\label{2019100801}
\int_0^T \|u(\omega, t,\cdot)\|^r_{H_r^2} dt \leq N_1 \int_0^T \|f(\omega,t,\cdot)\|^r_{L_r} dt ,
\end{align}
and
\begin{align}
						\label{2019100801-2}
\int_0^T \|u_{xx}(\omega, t,\cdot)\|^r_{L_r} dt \leq N_2 \int_0^T \|f(\omega,t,\cdot)\|^r_{L_r} dt ,
\end{align}
where $N_1=N_1(d,p,r,\kappa, K,T)$ and $N_2=N_2(d,p,r,\kappa, K)$. 
Thus taking the $L_{p/r}(\Omega)$-seminorm to the both sides of  \eqref{2019100801} and \eqref{2019100801-2},  we have \eqref{2019100802} and  \eqref{2019100802-2}, respectively.
\end{proof}
\begin{remark}
Let $u$ be a solution to 
 \begin{align}
							\label{2020010701}
				 du= \left( \bar a^{ij}(\omega, t)u_{x^ix^j}+ f(\omega)  \right)dt , \quad t \in (0,\infty); \quad u(0,\cdot)=0.
\end{align}
Assume that  $f \in L_p\left(\Omega, \rF ,  P; L_r\left( (0,\infty), \cB((0,\infty)) ;L_r \right) \right)$ and 
$$
\|u\|_{L_p\left(\Omega, \rF ,  P; L_r\left( (0,T), \cB((0,T)) ;L_r \right) \right)} 
+\|u_{xx}\|_{L_p\left(\Omega, \rF ,  P; L_r\left( (0,T), \cB((0,T)) ;L_r \right) \right)} <\infty
$$
for all $T \in (0,\infty)$ with $r \in (1,\infty)$ and $p \in (0,\infty)$.
Then by \eqref{2019100802-2},
\begin{align}
						\label{2019100810-2}
\bE \left[  \left(\int_0^\infty \|u_{xx}(t,\cdot)\|^r_{L_r} dt  \right)^{p/r}\right] \leq N_2 \bE \left[ \left(\int_0^\infty \|f(t,\cdot)\|^r_{L_r} dt \right)^{p/r} \right]
\end{align}
since the constant $N_2$ in \eqref{2019100802-2} is independent of $T$.
\end{remark}

\begin{theorem}
				\label{thm 3-2}
Let $1<r \leq p < \infty$. 
Assume that the coefficients $a^{ij}(t)$ are $\rF \times \cB((0,\infty))$-measurable and satisfy the following ellipticity condition
\begin{align*}
\kappa |\xi|^2 \leq \bar a^{ij}(\omega, t) \xi^i \xi^j \leq K |\xi|^2 \qquad  \forall (\omega, t,\xi) \in \Omega \times [0,\infty) \times \fR^d.
\end{align*}
Then for any $f \in \bL_{p,r}(T)$, there exists a unique solution $u \in \bH_{p,r}^2(T)$ to equation \eqref{det ran eqn} such that
\begin{align}
								\label{2020011001}
\|u\|_{\bH_{p,r}^2(T)}\leq N_1 \|f\|_{\bL_{p,r}(T)}.
\end{align}
and
\begin{align}
								\label{2020011001-2}
\|u_{xx}\|_{\bH_{p,r}(T)}\leq N_2 \|f\|_{\bL_{p,r}(T)},
\end{align}
where $N_1=N_1(d,p,r,\kappa,K,T)$ and $N_2=N_2(d,p,r,\kappa,K)$.
\end{theorem}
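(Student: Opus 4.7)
The plan is to first establish the estimates \eqref{2020011001}--\eqref{2020011001-2} on the dense class $\bH_c^\infty(T)$ from Lemma \ref{lem dense}, where a pathwise solution is provided by Theorem \ref{thm 3-1}, and then to extend by closedness to $f \in \bL_{p,r}(T)$. For $f \in \bH_c^\infty(T)$ one has $f(\omega,\cdot,\cdot) \in L_r((0,T) \times \fR^d)$ for every $\omega$, so Theorem \ref{thm 3-1} applied pathwise (with exponent $r$) yields a unique solution $u(\omega,t,x)$ represented by convolution of $f$ against the random kernel $p(\omega,t,s,x-y)$ defined in \eqref{ker def}. The bound \eqref{2020011001} on $u$ itself follows routinely: insert the uniform Gaussian bound \eqref{ker est 1} into the representation, take the $L_r(\Omega)$-norm inside via Minkowski's integral inequality, and use Young's inequality together with $t \in (0,T)$ to obtain $\|u\|_{\bL_{p,r}(T)} \leq NT\|f\|_{\bL_{p,r}(T)}$.

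The serious work is \eqref{2020011001-2}. Set $h(t,x) := \|u_{xx}(\cdot,t,x)\|_{L_r(\Omega)}$ and $g(t,x) := \|f(\cdot,t,x)\|_{L_r(\Omega)}$, so the target reads $\|h\|_{L_p(\fR^{d+1})} \leq N\|g\|_{L_p(\fR^{d+1})}$. The case $p=r$ is immediate from Theorem \ref{thm 3-1} applied pathwise together with Fubini. For $p>r$ the strategy is a sharp-function / Fefferman--Stein argument. Fix a parabolic cylinder $Q_c(t_0,x_0)$ and split $f=f_1+f_2$ with $f_1 = f \cdot \mathbf{1}_{Q_{8c}(t_0,x_0)}$; let $u_{xx}^{(i)}$ denote the piece of $u_{xx}$ associated with $f_i$. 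Choosing the centering constant $c_* = \|u_{xx}^{(2)}(\cdot,t_0,x_0)\|_{L_r(\Omega)}$ and applying the reverse triangle inequality in $L_r(\Omega)$,
\begin{align*}
|h(t,x)-c_*| \leq \|u_{xx}^{(1)}(\cdot,t,x)\|_{L_r(\Omega)} + \|u_{xx}^{(2)}(\cdot,t,x)-u_{xx}^{(2)}(\cdot,t_0,x_0)\|_{L_r(\Omega)}.
\end{align*}
The average over $Q_c$ of the first summand is controlled by $N(\mathcal{M}(g^r)(t_0,x_0))^{1/r}$ by Jensen's inequality and the pathwise $L_r$-bound, where $\mathcal{M}$ is the parabolic Hardy--Littlewood maximal operator. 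For the second summand, Minkowski's integral inequality extracts $\sup_\omega|p_{xx}(\omega,t,s,x-y)-p_{xx}(\omega,t_0,s,x_0-y)|$ acting against $g(s,y)$; the $\omega$-uniform H\"ormander estimate \eqref{2020012602} combined with a dyadic annular decomposition then bounds this by $N\mathcal{M}g(t_0,x_0)$. Hence $h^\#(t_0,x_0) \leq N(\mathcal M(g^r)(t_0,x_0))^{1/r} + N\mathcal{M}g(t_0,x_0)$, and the Fefferman--Stein inequality (whose a priori hypothesis $h \in L_{p_0}$ for some finite $p_0$ is satisfied for $f \in \bH_c^\infty(T)$ because of the compactly supported and bounded character of such $f$) together with the Hardy--Littlewood maximal theorem (requiring $p/r \geq 1$ and $p > 1$) yields $\|h\|_{L_p} \leq N\|g\|_{L_p}$, which is \eqref{2020011001-2}.

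Uniqueness follows because the difference of two solutions in $\bH_{p,r}^2(T)$ satisfies the homogeneous equation pathwise almost surely, and Theorem \ref{thm 3-1} with $p=r$ forces the difference to vanish. For general $f \in \bL_{p,r}(T)$, Lemma \ref{lem dense} supplies an approximating sequence $f_n \in \bH_c^\infty(T)$; by the already-established estimates and linearity, the corresponding $u_n$ are Cauchy in $\bH_{p,r}^2(T)$, and their limit $u$ solves \eqref{det ran eqn} in the distributional sense of Definition \ref{def sol-2} with the inherited bounds \eqref{2020011001}--\eqref{2020011001-2}. The main obstacle is the sharp-function step: the $\omega$-uniform H\"ormander condition \eqref{2020012602} (rather than the merely pointwise-in-$\omega$ version \eqref{20200126}) is essential in order to treat $\omega$ as a passive index via Minkowski. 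Moreover, because $f \mapsto h$ is only sublinear and not quasilinear (since $h$ is the $L_r(\Omega)$-norm of a linear image of $f$), the extrapolation from $p=r$ to $p>r$ cannot invoke the classical Marcinkiewicz theorem verbatim; one needs the variant for non-quasilinear operators alluded to in the introduction.
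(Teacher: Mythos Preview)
Your overall strategy is correct and, in the extrapolation step, takes a somewhat different route from the paper. Both proofs start from the pathwise representation \eqref{2019100803}, reduce to $f\in\bH_c^\infty(T)$ via Lemma~\ref{lem dense}, obtain the zeroth-order bound from Lemma~\ref{lem20200110} (your Minkowski/Young argument is exactly this), and close by approximation. The difference is in how one passes from $p=r$ to $p>r$ for $u_{xx}$. The paper packages the sharp-function bound as Corollary~\ref{2020010610}, namely $(\cG_r[f_1+f_2])^\sharp \le 2\cM(\cG_r[f_1]) + N\|f_2\|_{\bL_{\infty,r}}$, and then feeds this into the custom interpolation Lemma~\ref{ext lem} with a \emph{level-set} splitting $f=f_{1,\lambda}+f_{2,\lambda}$. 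You instead aim for the pointwise bound $h^\sharp\le N(\cM(g^r))^{1/r}+N\cM g$ via a \emph{geometric} local/far splitting and then apply Fefferman--Stein and Hardy--Littlewood directly. Your route is more elementary in that it bypasses Lemma~\ref{ext lem} entirely.

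Two points, however, deserve correction. First, the bound on the far piece by $N\cM g(t_0,x_0)$ does \emph{not} follow from the integrated H\"ormander condition \eqref{2020012602} alone; that estimate only yields $N\|g\|_{L_\infty}$, which is precisely why the paper needs Lemma~\ref{ext lem}. To get the maximal-function bound via dyadic annuli you must use the $\omega$-uniform \emph{pointwise} kernel-difference estimates recorded in Remark~\ref{rmk 20200129} (the displayed inequality just above \eqref{2020012602}, coming from \eqref{ker est 1}--\eqref{ker est 2}), which give geometric decay $\lesssim 2^{-k}$ on each annulus $Q_{2^{k+4}c}\setminus Q_{2^{k+3}c}$. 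With that correction your argument goes through. Second, your closing remark is confused: the map $f\mapsto h=\cG_r[f]$ \emph{is} sublinear and hence quasilinear (the paper says so explicitly just before Lemma~\ref{ext lem}); the non-quasilinearity concern in the paper is about $f\mapsto(\cG_r[f])^\sharp$, and it is relevant only for the paper's level-set approach, not for yours. Once you have $h^\sharp\le N(\cM(g^r))^{1/r}+N\cM g$ pointwise, Fefferman--Stein applies directly and no Marcinkiewicz-type argument is needed at all.
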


The proof of the theorem will be given in Section \ref{pf thm 3-2}.

\mysection{A boundedness of integral operators }

There are close relations between boundedness of integral operators and estimates for solutions to \eqref{main eqn} since solutions can be represented by integral operators. 
We show the boundedness of many integral operators which play important roles to obtain the maximal regularity moment estimate.

In this section, we handle both the deterministic coefficients $ a^{ij}(t)$ on $(0, \infty) $ and random coefficients $\bar a^{ij}(\omega,t)$ on $\Omega \times (0, \infty) $ satisfying
\begin{align*}
\kappa |\xi|^2 \leq  a^{ij}(t) \leq K |\xi|^2 \qquad  \forall (t,\xi) \in [0,\infty) \times \fR^d
\end{align*}
and
\begin{align}
				\label{ran ellip}
\kappa |\xi|^2 \leq \bar a^{ij}(\omega,t) \leq K |\xi|^2 \qquad  \forall (\omega, t,\xi) \in \Omega \times [0,\infty) \times \fR^d.
\end{align}
We assume that $ a^{ij}(t)$ is nonnegative symmetric matrix-valued $\cB((0,\infty))$-measurable and $\bar a^{ij}(\omega,t)$ is $\rF \times \cB((0,\infty))$-measurable.
Recall
$$
p(t,\rho,x) = 1_{\rho<t} (4\pi)^{-d/2} (\det B_{t \rho} )^{1/2} \exp ( - (B_{t \rho} x, x)/4 ),
$$
where
$$
a(t)=(a^{ij}(t)), \quad A_{ t \rho}= \int_\rho^t a(\eta) d\eta, \quad B_{t \rho}= A^{-1}_{t \rho}, \quad \sigma_{t \rho}=A^{1/2}_{t \rho}.
$$
For random coefficients $a^{ij}(\omega,t)$, we similarly define
\begin{align}
					\label{ran ker def}
\bar p(\omega,t,\rho,x) = 1_{\rho<t} (4\pi)^{-d/2} (\det B_{\omega,t \rho} )^{1/2} \exp ( - (B_{\omega,t \rho} x, x)/4 ),
\end{align}
where
$$
\bar a(\omega,t)=(\bar a^{ij}(\omega, t)), \quad A_{ \omega, t \rho}= \int_\rho^t \bar a(\omega, \eta) d\eta, \quad B_{\omega,t \rho}= A^{-1}_{\omega, t \rho}, \quad \sigma_{\omega, t \rho}=A^{1/2}_{\omega, t \rho}.
$$
Note that fixing $\omega \in \Omega$, we have kernel estimates for  $\bar p(\omega, t,\rho,x)$ from \eqref{ker est 1} and \eqref{ker est 2}.
Moreover these kernel estimates hold uniformly for all $\omega \in \Omega$ since \eqref{ran ellip} holds uniformly for $\omega$. 

Next we introduce an integral operator with a random kernel and a stochastic integral operator with a non-random kernel. 

For $f \in \bL_{p,r}(T)$ and  $g \in \bL_{p,r}(T,l_2)$, we denote
$$
\cT f(t,x) = \int_0^t \int_{\fR^d} \bar p(\omega, t,\rho,x-y)f(\rho,y) dy d\rho
$$
and
\begin{align}
							\label{2020012010}
\bT f(t,x) = \int_0^t \int_{\fR^d} p(t,\rho,x-y)g^k(\rho,y) dy dw^k_\rho.
\end{align}
\begin{remark}
							\label{rmk 20200123}
Note that for each $\omega,t,x$, the function  $\bar p(\omega,t,\rho,x)$ is not $\rF_\rho$-adapted (actually it is $\rF_{t}$-adapted)  even if we give the predictable assumption on $a^{ij}(\omega,t)$.
Thus if we consider the random kernel  $\bar p(\omega,t,\rho,x-y)$ in \eqref{2020012010}, then the operator $\bT$ is not well-defined since  
the predictability of the stochastic process $p(\omega, t,\rho,x-y)g^k(\rho,y)$ is not guaranteed even though $g$ is a very nice stochastic process. 
In other words,  for each $t,\rho,x$, there is no guarantee that $\int_{\fR^d} \bar p(\omega,t,\rho,x-y)g^k(\rho,y) dy $ is $\rF_\rho$-adapted. 
Therefore $\int_{\fR^d} \bar p(\omega,t,\rho,x-y)g^k(\rho,y) dy$ is not stochastic integrable. 
\end{remark}

We show that $\cT$ is a bounded operator on $\bL_{p,r}(T)$ for all $r,p \in [1,\infty]$ first. 
\begin{lemma}
							\label{lem20200110}
Let $T \in (0,\infty)$ and $p,r \in [1,\infty]$. 
Then  there exists a positive constant $N(d,p,\kappa, K,T)$ such that
$$
\|\cT f\|_{\bL_{p,r}(T)}\leq N \|f\|_{\bL_{p,r}(T)} \qquad \forall f \in \bL_{p,r}(T).
$$
\end{lemma}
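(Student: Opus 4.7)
The plan is to reduce the operator $\cT$ to a purely deterministic convolution against a Gaussian kernel and then invoke Young's inequality. The key ingredient is the uniform-in-$\omega$ pointwise Gaussian bound on $\bar p$ coming from estimate \eqref{ker est 1} with $\gamma = 0$: since the ellipticity constants $\kappa, K$ are independent of $\omega$, one has
$$|\bar p(\omega, t, \rho, x)| \leq q(t-\rho, x) := N\,(t-\rho)^{-d/2}\exp\bigl(-c_0(t-\rho)^{-1}|x|^2\bigr)\,1_{\rho<t}$$
for all $\omega \in \Omega$, with $N = N(d,\kappa,K)$ and $c_0 = c_0(d,\kappa)$ (compare Remark \ref{rmk 20200129}).

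Taking absolute values inside the definition of $\cT f$ and then applying Minkowski's integral inequality with respect to the $L_r(\Omega)$ norm (legitimate for every $r\in[1,\infty]$) to pull the norm inside the space-time integrals, I would obtain
$$\|\cT f(\cdot,t,x)\|_{L_r(\Omega)} \leq \int_0^t\int_{\fR^d} q(t-\rho,x-y)\,F(\rho,y)\,dy\,d\rho, \qquad F(\rho,y) := \|f(\cdot,\rho,y)\|_{L_r(\Omega)}.$$
By definition $\|F\|_{L_p((0,T)\times\fR^d)} = \|f\|_{\bL_{p,r}(T)}$, so it suffices to bound the right-hand side, viewed as a convolution, in $L_p((0,T)\times\fR^d)$ by a constant multiple of $\|F\|_{L_p((0,T)\times\fR^d)}$.

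Extending $q$ and $F$ by zero outside $(0,T)\times\fR^d$, the right-hand side is a genuine convolution on $\fR^{d+1}$. Young's inequality (in the form $L_1 * L_p \to L_p$, valid for every $p\in[1,\infty]$) then yields
$$\|\cT f\|_{\bL_{p,r}(T)} \leq \|q\|_{L_1((0,T)\times\fR^d)}\,\|F\|_{L_p((0,T)\times\fR^d)},$$
and a direct Gaussian computation $\int_{\fR^d}\exp(-c_0 s^{-1}|z|^2)\,dz = (\pi s/c_0)^{d/2}$ gives $\|q\|_{L_1} = N(\pi/c_0)^{d/2}T$, finishing the proof. I do not anticipate a genuine obstacle here: this is the ``easy'' half of the program that isolates the kernel's $L_1$-integrability from any finer oscillation structure. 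The endpoint cases $p,r \in \{1,\infty\}$ are absorbed by the standard interpretations of Minkowski's and Young's inequalities with essential suprema, and, crucially, no stochastic integration is involved, so the non-$\rF_\rho$-adaptedness of $\bar p(\omega,t,\rho,\cdot)$ highlighted in Remark \ref{rmk 20200123} causes no trouble for $\cT$.
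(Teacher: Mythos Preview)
Your proof is correct and follows essentially the same route as the paper: both arguments use the uniform-in-$\omega$ Gaussian bound \eqref{ker est 1} to dominate $\bar p$ by a deterministic kernel, apply Minkowski's inequality to push the $L_r(\Omega)$ norm inside the space--time integral, and then exploit the $L_1((0,T)\times\fR^d)$-integrability of the Gaussian to conclude. The only cosmetic difference is that you package the final step as Young's convolution inequality while the paper writes out the corresponding Minkowski inequality by hand after a change of variables.
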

\begin{proof}
Since the proof of the case $p=\infty$ or $r=\infty$ is easier, we only prove the lemma with the additional assumption $ p,r < \infty$.
By the generalized Minkowski inequality and \eqref{ker est 1}, we have
\begin{align*}
&\|\cT f\|^p_{\bL_{p,r}(T)} \\
&=\int_0^T \int_{\fR^d} \left(\bE \left[  \left|\int_0^t \int_{\fR^d} \bar  p(\omega,t,\rho , y)  f(\rho,x-y) dy d\rho \right|^r \right] \right)^{p/r}dx dt  \\
&\leq \int_0^T \int_{\fR^d}  \left[  \left|\int_0^t \int_{\fR^d}  \bE \left[| \bar p(\omega,t,\rho , y)  f(\rho, x-y)|^r \right]^{1/r} dy d\rho \right| \right]^{p}dx dt \\
&\leq \int_0^T \int_{\fR^d}  \bigg[  \bigg|\int_0^t \int_{\fR^d}  (t-\rho)^{-d/2 }\exp\left( -c_0 (t-\rho)^{-1}|y|^2\right)  \\
&\qquad \qquad \qquad \qquad \qquad \qquad \qquad \qquad \qquad \qquad  \times \bE \left[|f(\rho, x-y)|^r \right]^{1/r} dy d\rho \bigg| \bigg]^{p}dx dt  \\
&=\int_0^T \int_{\fR^d}   \left[  \left|\int_0^t \int_{\fR^d}  \rho^{-d/2 }\exp\left( -c_0 \rho^{-1}|y|^2\right)   \bE \left[|f(t-\rho, x-y)|^r \right]^{1/r} dy d\rho \right| \right]^{p}dx dt  \\
&\leq  \left[ \int_0^t \int_{\fR^d}  \rho^{-d/2 }\exp\left( -c_0 \rho^{-1}|y|^2\right)  dy d\rho  \right]^{p} \|f\|_{\bL_{p,r}(T)}^p \\
&\leq N \|f\|_{\bL_{p,r}(T)}^p.
\end{align*}
The lemma is proved. 
\end{proof}
Similarly $\bT$ is a bounded operator from  $\tilde \bL_{p,r}(T,l)$ to $ \bL_{p,r}(T)$ with some restrictions on the range of $p$ and $r$. 
\begin{lemma}
						\label{lem2020011010}
Let $T \in (0,\infty)$, $p \in [2,\infty]$, and $r \in [2,\infty)$. 
Then  there exists a positive constant $N(d,p,\kappa, K,T)$ such that
\begin{align}
						\label{20200114}
\|\bT g\|_{\bL_{p,r}(T)}
\leq N \|g\|_{\bL_{p,r}(T)} \quad \forall g \in \tilde \bL_{p,r}(T,l_2).
\end{align}
\end{lemma}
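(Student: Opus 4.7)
The plan is to exploit the deterministic nature of the kernel $p(t,\rho,\cdot)$ so that the stochastic convolution can be controlled pathwise via the Burkholder-Davis-Gundy (BDG) inequality, and then to iterate Minkowski's integral inequality three times in order to pull deterministic $L_p$-norms through the expectation. As pointed out in Remark~\ref{rmk 20200123}, predictability of the integrand $\int p(t,\rho,x-y)g^k(\rho,y)\,dy$ (for fixed $t,x$) is exactly what the deterministic kernel buys us, so the stochastic integral is well-defined and BDG applies with exponent $r \geq 2$.

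First I would fix $(t,x) \in (0,T) \times \fR^d$ and write, by BDG,
\begin{align*}
\bE|\bT g(t,x)|^r \leq N_r\,\bE\Biggl[\Bigl(\int_0^t \Bigl|\int_{\fR^d} p(t,\rho,x-y)g(\rho,y)\,dy\Bigr|_{l_2}^{2}\,d\rho\Bigr)^{r/2}\Biggr].
\end{align*}
Since $r/2 \geq 1$, Minkowski's integral inequality in $L_{r/2}(\Omega)$ moves the expectation inside the $d\rho$-integral. A second Minkowski, this time in $L_r(\Omega;l_2)$, moves it inside the $dy$-integral, producing the pathwise bound
\begin{align*}
\bE[|\bT g(t,x)|^r]^{1/r} \leq N\Bigl(\int_0^t\bigl([p(t,\rho,\cdot) \ast G(\rho,\cdot)](x)\bigr)^{2}\,d\rho\Bigr)^{1/2},
\end{align*}
where $G(\rho,y) := (\bE|g(\rho,y)|_{l_2}^r)^{1/r}$. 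Next I take the $L_p(dx)$-norm and apply Minkowski a third time (using $p/2 \geq 1$) to commute the $L_{p/2}(dx)$-norm past the $d\rho$-integral. Since $p(t,\rho,\cdot)$ is a probability density, Young's convolution inequality reduces the inner $L_p(dx)$-norm to $\|G(\rho,\cdot)\|_{L_p}$; H\"older's inequality in $\rho$ (using $p \geq 2$) followed by Fubini in $(t,\rho)$ on $(0,T)^2$ then yields
\begin{align*}
\|\bT g\|_{\bL_{p,r}(T)}^p \leq N\,T^{p/2}\int_0^T\|G(\rho,\cdot)\|_{L_p}^{p}\,d\rho = N\,T^{p/2}\|g\|_{\bL_{p,r}(T,l_2)}^p,
\end{align*}
which is the desired bound. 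The endpoint $p = \infty$ is handled separately by bounding $G(\rho,y)$ pointwise by $\|g\|_{\bL_{\infty,r}(T,l_2)}$ before the $d\rho$-integration and using $\int p(t,\rho,x-y)\,dy = 1$.

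The main structural point, and essentially the only place where the two-sided hypothesis $2 \leq r$, $2 \leq p$ is used, is that all three Minkowski swaps must go in the favorable direction. The one place where the argument could genuinely fail is the BDG step, which requires the stochastic integrand to be predictable; this is where the determinism of $p$ is essential, and if one tried to replace $p$ by the random kernel $\bar p$ the method would break down at that point, consistent with Remark~\ref{rmk 20200123}. Everything else is routine Gaussian-convolution analysis, with the $T$-dependence arising solely from the H\"older step in $\rho$.
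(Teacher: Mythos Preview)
Your proof is correct and follows essentially the same route as the paper's: BDG to pass to the square function, two Minkowski swaps to push the expectation through the $d\rho$- and $dy$-integrals, a H\"older step in $\rho$ (this is where the $T$-dependence enters), and finally the $L_1$-bound on the kernel. The only cosmetic difference is that you invoke Young's convolution inequality via $\int p(t,\rho,\cdot)\,dy=1$, whereas the paper applies the pointwise Gaussian bound \eqref{ker est 1} and then integrates; your packaging is slightly cleaner but the content is identical.
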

\begin{proof}
As in the proof of the previous lemma, we only prove the more difficult case $ p < \infty$.
Let $g \in  \tilde \bL_{p,r}(T,l_2)$.
By the Burkholder-Davis-Gundy inequality, 
\begin{align*}
&\|\bT g\|^p_{\bL_{p,r}(T)} \\
&=\int_0^T \int_{\fR^d} \left(\bE \left[  \left|\int_0^t \int_{\fR^d} p(t,\rho , y)  g^k(\rho,x-y) dy dw^k_\rho \right|^r \right] \right)^{p/r}dx dt  \\
&\leq N \int_0^T \int_{\fR^d}  \left(\bE \left[  \left( \int_0^t \left| \int_{\fR^d} p(t,\rho , y)  g(\rho,x-y) dy\right|_{l_2}^2 d\rho \right)^{r/2}  \right] \right)^{p/r}dx dt \\
\end{align*}
Moreover, by the generalized Minkowski inequality,  the H\"older inequality, and the Fubini theorem, the above term is less than or equal to 
\begin{align*}
&N \int_0^T \int_{\fR^d}  \left(\int_0^t  \left( \bE \left[   \left| \int_{\fR^d} p(t,\rho , y)  g(\rho,x-y) dy\right|_{l_2}^r  \right]  \right)^{2/r} d\rho \right)^{p/2}dx dt \\
&\leq N \int_0^T \int_{\fR^d}  \left(\int_0^t  \left(  \int_{\fR^d}  \left( \bE \left[   \left| p(t,\rho , y)  g(\rho,x-y)\right|_{l_2}^r \right]  \right)^{1/r}  dy  \right)^{2} d\rho \right)^{p/2}dx dt \\
&\leq N \int_0^T \int_{\fR^d}  \int_0^t  \left(  \int_{\fR^d}   \left( \bE \left[   \left|p(t,\rho , y)  g(\rho,x-y)\right|_{l_2}^r \right]  \right)^{1/r}  dy  \right)^p d\rho dx dt.
\end{align*}
Applying \eqref{ker est 1}  and the Minkowski inequality, we show that the above term is less than or equal to 
\begin{align*}
&N\int_0^T  \int_0^T 1_{0<\rho <t} \Bigg( \int_{\fR^d}    (t-\rho)^{-d/2 }\exp\left( -c_0 (t-\rho)^{-1}|y|^2\right)  \\
& \qquad \qquad \qquad \qquad \qquad \qquad \qquad  \left( \int_{\fR^d}    \left( \bE \left[   \left|g(\rho,x-y)\right|_{l_2}^r \right]  \right)^{p/r}   dx  \right)^{1/p}  dy  \Bigg)^p d\rho dt \\
&\leq N\int_0^T  \int_0^T  \Bigg( \int_{\fR^d}     \rho^{-d/2 }\exp\left( -c_0 \rho^{-1}|y|^2\right)   \\
& \qquad \qquad \qquad \qquad \qquad \left( \int_{\fR^d}   \left( 1_{0<t-\rho}\bE \left[   \left|g(t-\rho,x-y)\right|_{l_2}^r \right]  \right)^{p/r}   dx  \right)^{1/p}  dy  \Bigg)^p dt d\rho.
\end{align*}
Finally, Minkowski's inequality and the following $\rho$-uniform inequality
$$
\int_{\fR^d} \rho^{-d/2 }\exp\left( -c_0 \rho^{-1}|y|^2\right) dy \leq N(d,\kappa,K)
$$ 
imply that the last term in the above inequalities is controlled by 
\begin{align*}
& N\int_0^T \Bigg(  \int_{\fR^d}     \rho^{-d/2 }\exp\left( -c_0 \rho^{-1}|y|^2\right)     \\
& \qquad \qquad \qquad \qquad  \Bigg(\int_0^T\int_{\fR^d}   \left( 1_{0<t-\rho}\bE \left[   \left|g(t-\rho,x-y)\right|_{l_2}^r \right]  \right)^{p/r}   dx  dt \Bigg)^{1/p}  dy  \Bigg)^{p} d\rho  \\
&\leq N\int_0^T \left(  \int_{\fR^d} \rho^{-d/2 }\exp\left( -c_0 \rho^{-1}|y|^2\right)  dy  \right)^{p} d\rho   \|g\|^p_{\bL_{p,r}(T,l_2)}\\
&\leq N  \|g\|^p_{\bL_{p,r}(T,l_2)}.
\end{align*}
The lemma is proved. 
\end{proof}
\begin{remark}
The Burkholder-Davis-Gundy inequality gives the equivalence of two norms
\begin{align*}
\left\|\int_0^t \int_{\fR^d} p(t,\rho , y)  g^k(\rho,x-y) dy dw^k \right\|_{L_r(\Omega)}
\end{align*}
and
\begin{align*}
\left\| \left(\int_0^t  \left|\int_{\fR^d} p(t,\rho , y)  g^k(\rho,x-y) dy\right|_{l_2}^2 d\rho \right)^{1/2} \right\|_{L_r(\Omega)}
\end{align*}
for all $ r \in (0,\infty)$. 
Since the proof of Lemma \ref{lem2020011010} heavily depends on this equivalence, \eqref{20200114} is not expected if $r=\infty$.
Moreover if $r \in (0,2)$, then we cannot apply the generalized Minkowski inequality for the exponent $r/2$. 
\end{remark}

Before introducing some singular integral operators related to our maximal moment estimates, we present analytic tools to control singularities first. 
BMO (bounded mean oscillation) estimates have been known very important tools to treat singularities of integral operators (cf. \cite{grafakos2008classical,grafakos2009modern,Stein1993}). 
Thus we briefly review the definition of a BMO space.
For a locally integrable function $h$ on $\fR^{d+1}$, we define the BMO semi-norm of $h$ on $\fR^{d+1}$ as follows:
$$
\|h\|_{BMO} := \sup_{Q}  \aint_Q |h(\rho,z) - h_Q| d\rho dz:= \sup_{Q} \frac{1}{|Q|} \int_Q |h(\rho,z) - h_Q| d\rho dz,
$$
where
$$
h_Q:= \aint_{Q} f(\rho,z) d\rho dz := \frac{1}{|Q|} \int_Q f(r,z)d\rho dz
$$
and the sup is taken over all cylinders $Q$ of the type
\begin{align*}
Q
&:=Q_c(t_0,x_0) \\
&:= (t_0 -c^2, t_0+c^2) \times B_c(x_0) \\
&:= (t_0 -c^2, t_0+c^2) \times \{ x \in \fR^d : |x-x_0| < c\}, \quad c>0, ~(t_0,x_0) \in \fR^{d+1}.
\end{align*}
Moreover, we define the Feffereman-Stein sharp function and Hardy-Littlewood maximal function related to these cylinders. 
For $(t,x) \in \fR^{d+1}$, define
$$
h^\sharp  (t,x)= \sup_{Q} \frac{1}{|Q|} \int_Q |h(\rho,z) - h_Q| d\rho dz
$$
and
$$
\cM h (t,x) =\sup_{Q} \frac{1}{|Q|} \int_Q |h(\rho,z)| d\rho dz,
$$
the sup is taken over all cylinders $Q$ containing $(t,x)$. 
It is well-known (\cite{Stein1993}) that for all $p \in (1,\infty)$,  the $L_p$-norms of $h$, $h^\sharp$, and $\cM h$ are equivalent. In other words, for any $p \in (1,\infty)$, there exist positive constants $N_1(d,p)$, $N_2(d,p)$, $N_3(d,p)$ such that
\begin{align}
					\label{fs thm}
\|h\|_{L_p( \fR^{d+1})}  \leq N_1\|h^\sharp\|_{L_p( \fR^{d+1})}  \leq   N_2\|\cM h\|_{L_p( \fR^{d+1})} \leq N_3\|h\|_{L_p( \fR^{d+1})}.
\end{align}

Finally we introduce our singular integral operators which are not linear (but sublinear). 
For $f \in  \bH_c^\infty(\infty)$, we denote
\begin{align}
						\label{2019092501}
\cG_r [f](t,x) :=\left(\bE \left[  \left|\int_{0}^\infty \int_{\fR^d} \bar p_{xx}(\omega,t,\rho , z)  f(\rho,x-z) dz d\rho \right|^r \right] \right)^{1/r}
\end{align}
and
\begin{align}
						\notag
G_r [f](t,x,s,y) := \bigg(\bE \bigg[  \bigg|\int_{0}^\infty \int_{\fR^d} \bigg( &\bar p_{xx}(\omega,t,\rho ,z)  f(\rho,x-z)  \\
&- \bar p_{xx}(\omega, s,\rho ,z)  f(\rho,y-z) \bigg) dz d\rho \bigg|^r \bigg] \bigg)^{1/r}.
						\label{2019092701}
\end{align}
Similarly,  for $g \in \tilde \bH_c^\infty(\infty, l_2)$, we denote 
\begin{align}
							\label{2020010640}
\rG_r [g](t,x) :=\left(\bE \left[  \left( \int_{0}^\infty  \left| \int_{\fR^d} p_{x}(t,\rho , z)  g(\rho,x-z) dz \right|_{l_2}^2 d\rho \right)^{r/2} \right] \right)^{1/r}
\end{align}
and
\begin{align}
						\notag						
\bG_r [g](t,x,s,y) := \Bigg(\bE \Bigg[  \bigg|\int_{0}^\infty \bigg| \int_{\fR^d} \bigg( &p_{x}(t,\rho ,z)  g(\rho,x-z)  \\
							\label{2020010641}
&- p_{x}(s,\rho ,z)  g(\rho,y-z) \bigg) dz   \bigg|_{l_2}^2 d\rho \bigg|^{r/2} \Bigg] \Bigg)^{1/r}.
\end{align}
We emphasize  repeatedly that $\cG_r$ and $G_r$ are defined with random kernels but $\rG_r$ and $\bG_r$ are defined with deterministic kernels. 
\begin{remark}
						\label{rmk 3.6}
\begin{enumerate}[(i)]
\item Let $r \in [1,\infty)$. Then for each $(\omega,t,x) \in \Omega \times (0,\infty) \times \fR^{d}$, \eqref{2019092501} is well-defined as an iterated integral since
$ \bar p(\omega, t,\cdot, \cdot)$ is integrable on $(0,t) \times \fR^d$  for each $\omega$ and $t$ even though the kernel $ \bar p_{x^ix^j}(\omega, t,\cdot,\cdot)$ is not integrable on $(0,t) \times \fR^d$ for all $t$ and $\omega$.  
Indeed, by the integration by parts, the generalized Minkowski inequality, and \eqref{ker est 1},
\begin{align*}
&\cG_r [ f](t,x) \\
&=\left(\bE \left[  \left|\int_{0}^\infty \int_{\fR^d} \bar p(\omega,t,\rho , z)  f_{xx}(\rho,x-z) dz d\rho \right|^r \right]\right)^{1/r} \\
&\leq  \int_0^t \int_{\fR^d}  (t-\rho)^{-d/2 }\exp\left( -c_0 (t-\rho)^{-1}|z|^2 \right) \left(\bE \left[  \left|f_{x^ix^j}(\rho,x-z) \right|^r \right]\right)^{1/r} dz d\rho  \\
&\leq  \int_0^t \int_{\fR^d}(t-\rho)^{-d/2 }\exp\left( -c_0 (t-\rho)^{-1}|z|^2 \right) dz d\rho  \sup_{ (\rho,z) \in(0,t)\times \fR^d}   \left(\bE \left[  \left|f_{xx}(\rho,z) \right|^r \right]\right)^{1/r} \\
&<\infty.
\end{align*}
Similarly,  for $r \in [2,\infty)$,  we have
\begin{align*}
&\rG_r [g](t,x)  \\
&=\left(\bE \left[  \left( \int_{0}^\infty  \left|\int_{\fR^d} p(t,\rho , z)  g_x(\rho,x-z) dz \right|_{l_2}^2 d\rho \right)^{r/2} \right] \right)^{1/r} \\
&\leq \left(  \int_{0}^\infty  \left( \bE \left[ \left|\int_{\fR^d} p(t,\rho , z)  g_x(\rho,x-z) dz \right|_{l_2}^r   \right]  \right)^{2/r} d\rho \right)^{1/2} \\
&\leq \left(  \int_{0}^t  \left( \int_{\fR^d}  \left( (t-\rho)^{-d/2 }\exp\left( -c_0 (t-\rho)^{-1}|z|^2 \right)  \bE \left[  \left| g_x(\rho,x-z)   \right|_{l_2}^r   \right]   \right)^{1/r} dz  \right)^{2} d\rho \right)^{1/2} \\
&\leq \left(  \int_{0}^t  \left( \int_{\fR^d}   (t-\rho)^{-d/2 }\exp\left( -c_0 (t-\rho)^{-1}|z|^2 \right)   dz  \right)^{2} d\rho \right)^{1/2}  \\
&\quad \qquad \qquad \qquad  \qquad \qquad \qquad \qquad \qquad \qquad  \times \sup_{(\rho,z) \in (0,t) \times \fR^d}\left( \bE \left[  \left| g_x(\rho,x-z)   \right|_{l_2}^r  \right]  \right)^{1/r} \\
&<\infty.
\end{align*}
Thus  \eqref{2020010640} is well-defined.  As shown in the above inequalities \eqref{2020010640} is well-defined even if we assume the kernel is random.
However, if the kernel is random, then  the following It\^o's isometry 
\begin{align*}
|\rG_2 [g](t,x)|^2  \approx
\bE \left[  \left| \int_{0}^\infty  \left[\int_{\fR^d} \bar p_{x}(\omega, t,\rho , z)  g(\rho,x-z) dz   \right] dw_\rho^k \right|^{2} \right]
\end{align*}
does not hold since $ \int_{\fR^d}\bar p_{x}(\omega,t,\rho , z)  g(\rho,x-z) dz $ is not it\^o integrable (cf. Remark \ref{rmk 20200123}). 

\item For all $f_1, f_2 \in  \bH_c^\infty(\infty)$ and $(t,x), (s,y) \in (0,\infty) \times \fR^{d}$, applying Minkowski's inequality, we have
\begin{align}
						\notag
G_r[f_1+f_2](t,x,s,y) 
&\leq G_r[f_1](t,x,s,y)+ G_r[f_2](t,x,s,y) \\
						\label{2019092801}
&\leq \cG_r [f_1](t,x) + \cG_r[f_1](s,y) + G_r[f_2](t,x,s,y) \\
							\notag
&\leq \cG_r [f_1](t,x) + \cG_r[f_1](s,y) +  \cG_r [f_2](t,x) + \cG_r[f_2](s,y).
\end{align}
Thus \eqref{2019092701} is  well-defined.
Similarly, for all $g_1, g_2 \in \tilde \bH_c^\infty(\infty, l_2)$,
\begin{align}
						\notag
\bG_r[g_1+g_2](t,x,s,y) 
&\leq \bG_r[g_1](t,x,s,y)+ \bG_r[g_2](t,x,s,y) \\
							\label{20200108}
&\leq \rG_r [g_1](t,x) + \rG_r[g_1](s,y) + \bG_r[g_2](t,x,s,y) \\
							\notag
&\leq \rG_r [g_1](t,x) + \rG_r[g_1](s,y) +  \rG_r [f_2](t,x) + \rG_r[g_2](s,y).
\end{align}
Hence \eqref{2020010641} is well-defied.
\end{enumerate}
\end{remark}

In the following theorems, we show that there exist  continuous extensions of $\cG_r$ and $\rG_r$ on $\bL_r(\infty)$ and $\tilde \bL_r(\infty,l_2)$, respectively.  
We use the same notation $\cG_r$ and $\rG_r$ to denote these extensions in Theorem \ref{thm 5-2} and Theorem \ref{thm 5-4} by slightly abusing the notation, respectively.
\begin{theorem}
					\label{thm 5-2}
Let $r \in (1,\infty)$. Then there exists a continuous extension of $\cG_r $ on $\bL_{r}(\infty)$ such that
\begin{align}
							\label{2020010650}
\| \cG_r[f]\|_{L_r((0,\infty) \times \fR^{d})} \leq N \|f\|_{\bL_{r}(\infty)} \qquad \forall f  \in \bL_{r}(\infty),
\end{align}
where  $N=N(d,r,\kappa,K)$.
\end{theorem}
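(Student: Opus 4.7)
The plan is to recognize that $\cG_r[f]$ is nothing but a packaging of the deterministic maximal regularity estimate of Theorem \ref{thm 3-1}, viewed pathwise. For each fixed $\omega \in \Omega$, the iterated integral in \eqref{2019092501} is precisely $u_{xx}(\omega, t, x)$, where $u(\omega, t, x) = \int_0^t \int_{\fR^d} \bar p(\omega, t, \rho, x-z) f(\omega, \rho, z)\, dz\, d\rho$ is the classical solution to \eqref{det ran eqn} with the frozen coefficient process $\bar a^{ij}(\omega, \cdot)$ and source $f(\omega, \cdot, \cdot)$. Consequently $|\cG_r[f](t,x)|^r = \bE |u_{xx}(\omega,t,x)|^r$.

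I would first establish the estimate on the dense subclass $f \in \bH_c^\infty(\infty)$, where all integrations are justified by part (i) of Remark \ref{rmk 3.6}. An application of Fubini's theorem gives
$$
\|\cG_r[f]\|_{L_r((0,\infty) \times \fR^d)}^r = \int_0^\infty \int_{\fR^d} \bE |u_{xx}(\omega,t,x)|^r\, dx\, dt = \bE \left[ \int_0^\infty \int_{\fR^d} |u_{xx}(\omega,t,x)|^r\, dx\, dt \right].
$$
Invoking Theorem \ref{thm 3-1} with $p = r$, together with the remark following it, which extends the estimate to $T = \infty$ thanks to the $T$-independence of the constant $N_2$, yields
$$
\bE \left[ \int_0^\infty \|u_{xx}(t, \cdot)\|_{L_r}^r\, dt \right] \leq N(d, r, \kappa, K)\, \bE \left[ \int_0^\infty \|f(t, \cdot)\|_{L_r}^r\, dt \right] = N \|f\|_{\bL_r(\infty)}^r,
$$
which is exactly \eqref{2020010650} on the dense subclass.

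The final step is to extend $\cG_r$ continuously to all of $\bL_r(\infty)$. Since the map $f \mapsto u_{xx}$ is linear in $f$ for each $\omega$, the reverse triangle inequality for the $L_r(\Omega)$-norm furnishes
$$
|\cG_r[f_1](t,x) - \cG_r[f_2](t,x)| \leq \cG_r[f_1 - f_2](t,x),
$$
so that $\cG_r$ is sublinear and Lipschitz as a map from $\bH_c^\infty(\infty)$ into $L_r((0,\infty) \times \fR^d)$. Combined with the density $\bH_c^\infty(\infty) \subset \bL_r(\infty)$ from Lemma \ref{lem dense}, this produces a unique continuous extension, and the bound \eqref{2020010650} passes to the limit.

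The only real obstacle is conceptual: disentangling the definition to see that $\cG_r[f](t,x)$ equals $(\bE|u_{xx}|^r)^{1/r}$ for a pathwise deterministic PDE solution. Once this identification is made, the estimate is immediate from the already-established deterministic theory, and no harmonic-analytic machinery (H\"ormander condition, Fefferman-Stein sharp function, Marcinkiewicz interpolation) is needed here; those tools are reserved for the genuinely stochastic analogue involving $\rG_r$ in Theorem \ref{thm 5-4}.
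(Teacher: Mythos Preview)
Your proof is correct and matches the paper's own argument essentially step for step: reduce to $f\in\bH_c^\infty(\infty)$ by density and the sublinearity inequality $|\cG_r[f_1]-\cG_r[f_2]|\leq\cG_r[f_1-f_2]$, identify $|\cG_r[f]|^r$ with $\bE|u_{xx}|^r$ for the pathwise solution, and invoke \eqref{2019100810-2} (the $T=\infty$ consequence of Theorem~\ref{thm 3-1}). One small inaccuracy in your closing commentary: the harmonic-analytic machinery is not used for Theorem~\ref{thm 5-4} either (that proof likewise reduces to known $L_r$-theory via BDG); those tools enter only in Theorems~\ref{thm 5} and~\ref{thm 5-3}.
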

\begin{proof}
Recall that $\bH_c^\infty(\infty)$ is dense in $ \bL_{r}(\infty)$ (Lemma \ref{lem dense}) and observe that  
\begin{align*}
\| \cG_r[f_1] - \cG_r[f_2]\|_{L_r((0,\infty) \times \fR^{d})} \leq \| \cG_r[f_1-f_2]\|_{L_r((0,\infty) \times \fR^{d})}  \qquad \forall f_1,f_2 \in \bH_c^\infty(\infty).
\end{align*}
Thus  it is sufficient to show \eqref{2020010650} for all $f \in \bH_c^\infty(\infty)$ due to the canonical extension of the sublinear operator.
\eqref{2020010650} can be obtained from parabolic BMO estimates (cf. \cite{kim2015parabolicps}). 
Here is an another method to prove this estimate on the basis of solvability of equation \eqref{2020010701}.
Let $f \in \bH_c^\infty(\infty)$. Since $f$ is nice enough, it is easy to check that the solution $u(t,x)$ to \eqref{2020010701} is given by
$$
u(t,x) = \int_{0}^\infty \int_{\fR^d} \bar p(\omega,t,\rho , z)  f(\rho,x-z) dz d\rho.
$$
Obviously,
$$
\bE |u_{xx}(t,x)|^r = |\cG_r(t,x)|^r.
$$
By \eqref{2019100810-2},
\begin{align*}
 \int_0^\infty \int_{\fR^d} |\cG_r f(t,x)|^r dt dx
 &=\bE \int_0^\infty \int_{\fR^d} |u_{xx}(t,x)|^r dt dx \\
&\leq N(d,r,\kappa,K) \bE \int_0^\infty \int_{\fR^d} |f|^r dt dx.
\end{align*}
Therefore we have \eqref{2020010650}. The theorem is proved. 
\end{proof}

\begin{theorem}
					\label{thm 5-4}
Let $r \in [2,\infty)$.
Then there exists a continuous extension of $\rG_r $ on $\tilde \bL_{r}(\infty,l_2)$ such that
\begin{align}
							\label{2020010702}
\| \rG_r[g]\|_{L_p((0,\infty) \times \fR^{d}) } \leq N \|g\|_{\bL_{r}(\infty,l_2)} \qquad \forall g \in \tilde \bL_r(\infty,l_2).
\end{align}
\end{theorem}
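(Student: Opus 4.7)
The plan is to parallel the BMO/Fefferman--Stein argument that underlies Theorem \ref{thm 5-2}, but with the ordinary H\"ormander condition (Corollary \ref{cor hor 1}) replaced by its stochastic counterpart (Corollary \ref{cor hor 2}). Reading the stated estimate with the natural mixed-norm interpretation $\|\rG_r[g]\|_{L_p((0,\infty)\times\fR^d)}\leq N\|g\|_{\bL_{p,r}(\infty,l_2)}$ for $2\leq r\leq p<\infty$ (the bare $\bL_r$ on the right-hand side of \eqref{2020010702} only giving a sensible scaling in the endpoint case $p=r$), the scheme is as follows.

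First, I would reduce to a dense subspace and establish the endpoint $p=r$. By Lemma \ref{lem dense}(ii) and the sublinearity recorded in Remark \ref{rmk 3.6}(ii), it suffices to prove the bound for $g\in\tilde\bH_c^\infty(\infty,l_2)$. For such $g$ form the stochastic convolution
\[
v(t,x)=\int_0^t\int_{\fR^d}p(t,\rho,y)g^k(\rho,x-y)\,dy\,dw^k_\rho;
\]
the deterministic kernel $p$ (contrast Remark \ref{rmk 20200123}) guarantees $\rF_\rho$-predictability of the integrand and hence a bona fide It\^o integral, and $v$ solves the pure-noise SPDE in the sense of Definition \ref{def sol-2}. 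Differentiating under the stochastic integral and applying the Burkholder--Davis--Gundy inequality (valid since $r\geq 2$) yields, pointwise in $(t,x)$,
\[
\bE|v_x(t,x)|^r \;\asymp\; \bE\!\left[\left(\int_0^\infty\!\Bigl|\!\int_{\fR^d}p_x(t,\rho,y)g(\rho,x-y)\,dy\Bigr|_{l_2}^{2}d\rho\right)^{r/2}\right]=(\rG_r[g](t,x))^{r}.
\]
Krylov's classical $L_r$-theory (with $p=r$) applied to $v$ then gives the case $p=r$ of \eqref{2020010702} after Fubini.

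Second, I would upgrade this $L_r$ bound to a sharp-function bound. For a cylinder $Q=Q_c(t_0,x_0)$ and $(t,x),(s,y)\in Q$, split $g=g_1+g_2$ with $g_1=g\,\mathbf{1}_{Q_{8c}}$. The near-field piece is dominated, via Remark \ref{rmk 3.6}(ii), by $\rG_r[g_1](t,x)+\rG_r[g_1](s,y)$; the endpoint $L_r$ bound of Step~1 together with H\"older converts its $Q$-average into a Hardy--Littlewood maximal function of $|g|_{l_2}^{r}$. The far-field piece is controlled pointwise by Corollary \ref{cor hor 2}: one interchanges the $L^{2}_{\rho}$ structure coming from BDG with the $L_r(\Omega)$ norm using generalized Minkowski (legal because $r\geq 2$), and then applies the stochastic H\"ormander condition on $\{(\rho,z):|\rho-t_0|^{1/2}+|z-x_0|\geq 8c\}$. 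Summing gives
\[
(\rG_r[g])^{\sharp}(t_0,x_0)\;\leq\;N\,\bigl(\cM(|g|_{l_2}^{r})(t_0,x_0)\bigr)^{1/r}.
\]
Finally, for $p\geq r$ apply the sharp-function inequality \eqref{fs thm} at exponent $p$ and the $L_{p/r}$-boundedness of $\cM$ to conclude $\|\rG_r[g]\|_{L_p}\leq N\|(\rG_r[g])^{\sharp}\|_{L_p}\leq N\|g\|_{\bL_{p,r}(\infty,l_2)}$; density then yields the continuous extension.

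The main obstacle is the sharp-function step. The $L^{2}$-in-$\rho$ norm built into $\rG_r$ via BDG means one cannot directly invoke an $L^{1}$-type H\"ormander estimate on the kernel: one must first commute the $L^{2}_{\rho}$ temporal norm past the $L_r(\Omega)$ expectation by generalized Minkowski before Corollary \ref{cor hor 2} even applies. This is precisely the reason the ``stochastic'' variant of H\"ormander's condition was proved in Section 3 rather than the ordinary one, and executing the interchange of norms carefully (while keeping track of the $\omega$-uniformity lost by moving $\bE$ inside the $L^{2}_{\rho}$) is the delicate technical point of the argument.
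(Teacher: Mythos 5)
The statement of Theorem~\ref{thm 5-4} contains a typo: by scaling and by comparison with Theorem~\ref{thm 5-2}, the $L_p$ on the left of \eqref{2020010702} must be $L_r$; the theorem is only the endpoint $p=r$ estimate, the mixed-norm version being Theorem~\ref{thm 5-3}. Your Step~1 proves exactly this endpoint and coincides with the paper's proof: form the stochastic convolution $v=\bT g$ (well-defined because the kernel $p$ is deterministic, cf.\ Remark~\ref{rmk 20200123}), observe via Burkholder--Davis--Gundy that $\bE|v_x(t,x)|^r \asymp (\rG_r[g](t,x))^r$, and invoke Krylov's $L_r$-theory \eqref{2020010711} for the pure-noise SPDE \eqref{2020010703}. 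So the part of the proposal that actually addresses the stated theorem is correct and matches the paper.

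Steps~2--3 overshoot the statement and attempt Theorem~\ref{thm 5-3}; there the proposal has a genuine gap. You claim the sharp-function inequality
$(\rG_r[g])^{\sharp}(t_0,x_0)\leq N\,\bigl(\cM(|g|_{l_2}^{r})(t_0,x_0)\bigr)^{1/r}$.
The near-field piece does give $\bigl(\cM(\bE|g|_{l_2}^{r})\bigr)^{1/r}$ by H\"older and the endpoint bound, but the far-field piece does not. Applying Corollary~\ref{cor hor 2} to $g_2=g\,\mathbf{1}_{Q_{8c}^c}$, after the generalized Minkowski interchange you pull a $\sup_{(\rho,z)}\bigl(\bE|g(\rho,z)|_{l_2}^r\bigr)^{1/r}$ out of the integral; this is exactly Lemma~\ref{lem2020010804}, whose conclusion is $N\|g_2\|_{\bL_{\infty,r}(\infty,l_2)}$ --- a global $L_\infty$ bound, not $\bigl(\cM(|g|_{l_2}^r)(t_0,x_0)\bigr)^{1/r}$. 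To convert the far-field into a localized maximal-function bound you would need a decaying, annulus-by-annulus refinement of the stochastic H\"ormander condition, which neither you nor the paper proves. The paper's own remark before Lemma~\ref{ext lem} acknowledges precisely this: $g\mapsto(\rG_r[g])^\sharp$ fails to be quasilinear, so the classical sharp-function route breaks down, and the paper instead splits $g$ at a threshold $\lambda$ and runs the good-$\lambda$ argument of Lemma~\ref{ext lem}, combining $\cM(\rG_r[g_1])$ (near field) with the $L_\infty$ bound (far field). Your proposal either needs that interpolation lemma or needs to supply the missing annular decay; as written, the asserted pointwise sharp-function bound does not follow from the cited ingredients. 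None of this affects the correctness of Step~1, which is all Theorem~\ref{thm 5-4} requires.
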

\begin{proof}
As mentioned in the proof of Theorem \ref{thm 5-2}, it is sufficient to show \eqref{2020010702} for all $g \in \tilde \bH_c^\infty(\infty,l_2)$. 
By the Burkholder-Davis-Gundy inequality, it is easy to check that 
\begin{align}
								\label{20200115}
|\rG_r [g](t,x)|^r  \approx
\bE \left[  \left| \int_{0}^\infty  \left[\int_{\fR^d} p_{x}(t,\rho , z)  g(\rho,x-z) dz   \right] dw_\rho^k \right|^{r} \right],
\end{align}
that is,  there exists a positive constant $N=N(d,r)$ such that
\begin{align*}
N^{-1} |\rG_r[g] (t,x)|^r \leq \bE\left[\left|\int_0^t \left[ \int_{\fR^d}p_x(t,s,x-y)g^k(s,y)dy \right]dw^k_s \right|^r\right]
\leq N |\rG_r[g](t,x)|^r.
\end{align*}
Thus \eqref{2020010702} can be obtained from the $L_r$-boundedness of stochastic singular integral operator (see \cite{kim2016lpso}).
This is also obtained from Krylov's $L_p$-theory (\cite{Krylov1999}) to the following stochastic PDEs 
 \begin{align}
						\label{2020010703}
				 du= \left( a^{ij}(t)u_{x^ix^j}+ g^k w_t^k  \right)dt , \quad t \in (0,\infty); \quad u(0,\cdot)=0. 
\end{align}
Since $g \in \tilde \bH_c^\infty(\infty,l_2)$, the solution  $u$ to \eqref{2020010703} is given by
\begin{align}
						\label{2020010710}
u(t,x) = \int_{0}^\infty  \int_{\fR^d} p(t,\rho , z)  g(\rho,x-z) dz  dw_\rho^k 
\end{align}
and satisfy
\begin{align}
						\label{2020010711}
\bE  \int_0^\infty \int_{\fR^d} |u_x(t,x)|^r dt dx \leq N \bE  \int_0^\infty \int_{\fR^d} |g(t,x)|_{l_2}^r dt dx
\end{align}
(cf. \cite{Krylov1999,kim2019sharp}).
Finally \eqref{20200115}, \eqref{2020010710}, and \eqref{2020010711} clearly imply \eqref{2020010702}.
\end{proof}

From the definitions of $\cG_r[f]$ and $\rG_r[g]$, for all $f \in \bH_c^\infty(\infty)$ and $\tilde g \in \bH_c^\infty(\infty,l_2)$, we have
$$
\cG_r[f](t,x) = \rG_r[g](t,x) = 0  \qquad \forall (t,x) \in (-\infty,0] \times \fR^d.
$$
Thus we may assume that the extensions $\cG_r[f](t,x)$ and $\rG_r[g](t,x)$ are defined on $\fR^{d+1}$ for all $f \in \bL_r(\infty)$ and $g \in \tilde \bL_r(\infty,l_2)$  by setting
$$
\cG_r[f](t,x) = \rG_r[g](t,x) = 0  \qquad \forall (t,x) \in  (-\infty,0] \times \fR^d.
$$
Moreover, we may assume that $f$ and $g$ are defined on $\Omega \times \fR^{d+1}$ for all $f \in \bL_r(\infty)$ and $g \in \tilde  \bL_r(\infty,l_2)$ by considering the trivial extensions $1_{(0,\infty)}(t)f(t,x)$ and $1_{(0,\infty)}(t)g(t,x)$. 
Furthermore, if $f \in \bH_r^2(\infty)$, then $\cG_r[f](t,x)$ can be understood pointwisely $(a.e.)$ as an iterated integral (cf. Remark \ref{rmk 3.6} (i)), i.e.
\begin{align}
\cG_r [f](t,x) 
									\notag
&=\left(\bE \left[  \left|\int_{0}^\infty \int_{\fR^d} \bar p(\omega,t,\rho , z)  f_{xx}(\rho,x-z) dz d\rho \right|^r \right] \right)^{1/r} \\
									\label{20200127}
&=\left(\bE \left[  \left|\int_{0}^\infty  \left[\int_{\fR^d} \bar p_{xx}(\omega,t,\rho , z)  f(\rho,x-z) dz \right] d\rho \right|^r \right] \right)^{1/r}.
\end{align}
Similarly, if $g \in \bH_r^1(\infty,l_2)$, then 
\begin{align}
\bG_r [g](t,x) 
									\notag
&=\left(\bE \left[  \left|\int_{0}^\infty  \left|\int_{\fR^d} p(t,\rho , z)  g_{x}(\rho,x-z) dz \right|^2_{l_2} d\rho \right|^{r/2} \right] \right)^{1/r} \\
									\label{202001272}
&=\left(\bE \left[  \left|\int_{0}^\infty  \left| \left[\int_{\fR^d} p_{x}(t,\rho , z)  g(\rho,x-z) dz \right] \right|_{l_2} d\rho \right|^{r/2} \right] \right)^{1/r}.
\end{align}
\begin{theorem}
					\label{thm 5}
Let $1<r \leq p <\infty$.
Then there exists a positive constant $N(d,p,r,\kappa,K)$ such that
$$
\| \cG_r[f]\|_{L_p((0,\infty) \times \fR^{d}) } \leq N \|f\|_{\bL_{p,r}(\infty)} \qquad \forall f \in \bL_{r}(\infty) \cap \bL_{ \infty,r}(\infty).
$$
\end{theorem}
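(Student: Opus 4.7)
The plan is to establish the pointwise sharp-function bound
\[
(\cG_r[f])^\sharp(t_0,x_0)\leq N\bigl(\cM[F^r]\bigr)^{1/r}(t_0,x_0), \qquad F(t,x):=\bigl(\bE|f(t,x)|^r\bigr)^{1/r},
\]
and then to conclude via \eqref{fs thm} together with the Hardy--Littlewood maximal theorem applied to $F^r$ on $L_{p/r}(\fR^{d+1})$. The case $p=r$ is already Theorem \ref{thm 5-2}, so I assume $p>r$. By Lemma \ref{lem dense} and the canonical sublinear extension used in the proof of Theorem \ref{thm 5-2}, I may work with $f\in\bH_c^\infty(\infty)$; the assumption $f\in\bL_{\infty,r}(\infty)$ is used only to guarantee $\cG_r[f]\in L_p(\fR^{d+1})$ a priori, so that Fefferman--Stein is available. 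Observe that $\|F\|_{L_p(\fR^{d+1})}=\|f\|_{\bL_{p,r}(\infty)}$.

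Fix a cylinder $Q=Q_c(\tilde t_0,\tilde x_0)$ containing $(t_0,x_0)$ and split $f=f_1+f_2$ with $f_1=f\,1_{Q_{8c}(\tilde t_0,\tilde x_0)}$. Since $\cG_r[f](t,x)=\|\int \bar p_{xx}(\omega,t,\rho,z)f(\rho,\cdot-z)\,dzd\rho\|_{L_r(\Omega)}$, the reverse triangle inequality for $L_r(\Omega)$ together with Remark \ref{rmk 3.6}(ii) gives
\[
|\cG_r[f](t,x)-\cG_r[f](s,y)|\leq \cG_r[f_1](t,x)+\cG_r[f_1](s,y)+G_r[f_2](t,x,s,y),
\]
whence
\[
\aint_Q|\cG_r[f]-(\cG_r[f])_Q|\leq 2\aint_Q\cG_r[f_1]+\aint_Q\aint_Q G_r[f_2](t,x,s,y)\,dsdy\,dtdx.
\]
For the $f_1$-contribution, H\"older's inequality and Theorem \ref{thm 5-2} applied to $f_1$ yield
\[
\aint_Q\cG_r[f_1]\leq\Bigl(\aint_Q\cG_r[f_1]^r\Bigr)^{1/r}\leq\frac{N}{|Q|^{1/r}}\Bigl(\int_{Q_{8c}}F^r\Bigr)^{1/r}\leq N\bigl(\cM[F^r]\bigr)^{1/r}(t_0,x_0).
\]

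The core of the argument is the $G_r[f_2]$ term. Minkowski's integral inequality (applied inside the $L_r(\Omega)$ norm) gives
\[
G_r[f_2](t,x,s,y)\leq\int_{\fR^{d+1}\setminus Q_{8c}}\sup_{\omega\in\Omega}|\bar p_{xx}(\omega,t,\rho,x-w)-\bar p_{xx}(\omega,s,\rho,y-w)|\,F(\rho,w)\,dwd\rho.
\]
I decompose $\fR^{d+1}\setminus Q_{8c}$ into dyadic shells $A_k=Q_{2^{k+1}\cdot 8c}\setminus Q_{2^k\cdot 8c}$ ($k\geq 0$) and combine two ingredients on each shell. First, by rerunning the proof of Lemma \ref{hor lem} with radius $2^k\cdot 8c$ in place of $8c$ and tracking the $c$-dependence of the resulting constants (the bounds in that proof are of the form $|t-s|^r(Rc)^{-r(|\gamma|+2)+2}+|x-y|^r(Rc)^{-r(|\gamma|+1)+2}$, which for $|\gamma|=2$ and $r=1$ produces the decay factor $1/R$), one obtains the refined H\"ormander estimate
\[
\int_{\fR^{d+1}\setminus Q_{2^k\cdot 8c}}\sup_\omega|\bar p_{xx}(\omega,t,\rho,x-w)-\bar p_{xx}(\omega,s,\rho,y-w)|\,dwd\rho\leq N\cdot 2^{-k}
\]
for $(t,x),(s,y)\in Q$. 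Second, the Gaussian bound \eqref{ker est 1}, uniform in $\omega$ by Remark \ref{rmk 20200129}, gives on $A_k$ the pointwise control $\sup_\omega|\bar p_{xx}(\omega,t,\rho,x-w)-\bar p_{xx}(\omega,s,\rho,y-w)|\leq N(2^kc)^{-(d+2)}$. Using these to estimate the $L_{r'}$-norm of the kernel difference on $A_k$, while $\int_{A_k}F^r\leq N(2^kc)^{d+2}\cM[F^r](t_0,x_0)$ and $|A_k|\leq N(2^kc)^{d+2}$, H\"older's inequality with exponents $r,r'$ on each shell makes the $c$-powers cancel thanks to $1/r+1/r'=1$, leaving
\[
G_r[f_2](t,x,s,y)\leq N\sum_{k\geq 0}2^{-k/r'}\bigl(\cM[F^r]\bigr)^{1/r}(t_0,x_0)\leq N\bigl(\cM[F^r]\bigr)^{1/r}(t_0,x_0),
\]
the geometric series converging precisely because $r>1$. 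Taking the supremum over $Q\ni(t_0,x_0)$ yields the sharp-function estimate; then \eqref{fs thm} and the Hardy--Littlewood maximal theorem on $L_{p/r}(\fR^{d+1})$ deliver
\[
\|\cG_r[f]\|_{L_p}\leq N\|(\cG_r[f])^\sharp\|_{L_p}\leq N\|(\cM[F^r])^{1/r}\|_{L_p}=N\|\cM[F^r]\|_{L_{p/r}}^{1/r}\leq N\|F\|_{L_p}=N\|f\|_{\bL_{p,r}(\infty)}.
\]

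The main obstacle is the refined H\"ormander estimate (a): extracting the $2^{-k}$ decay, rather than the merely uniform $O(1)$ bound of Corollary \ref{cor hor 1}, requires carefully tracking the radius-dependence of every constant appearing in the proof of Lemma \ref{hor lem}; the subsequent dyadic estimate then depends on the clean cancellation of powers of $2^kc$ between the kernel-difference $L_{r'}$-bound, the volume of the shell, and the $F^r$-average, which works only because $1/r+1/r'=1$.
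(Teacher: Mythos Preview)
Your argument is correct, but it follows a genuinely different route from the paper's.

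The paper does \emph{not} prove a pointwise sharp-function bound of the form $(\cG_r[f])^\sharp\leq N(\cM[F^r])^{1/r}$. Instead it establishes only the weaker two-term estimate
\[
(\cG_r[f_1+f_2])^\sharp(t_1,x_1)\leq 2\,\cM(\cG_r[f_1])(t_1,x_1)+N\|f_2\|_{\bL_{\infty,r}(\infty)}
\]
(Corollary \ref{2020010610}), which for the far piece $f_2$ uses just the \emph{uniform} H\"ormander bound of Corollary \ref{cor hor 1} and \eqref{2020012602}---no dyadic shells, no $2^{-k}$ decay. The passage from this estimate to the $L_p$ bound is then carried out by the tailor-made interpolation Lemma \ref{ext lem}: one splits $f=f_{1,\delta\lambda}+f_{2,\delta\lambda}$ along the level set $\{\|f\|_{L_r(\Omega)}>\delta\lambda\}$, absorbs the $L_\infty$ term, and integrates in $\lambda$ (a good-$\lambda$ style argument).

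Your approach trades that abstract interpolation lemma for more explicit kernel work: you strengthen Corollary \ref{cor hor 1} to a quantitative version with $2^{-k}$ decay on dyadic complements (which does follow, as you say, by tracking the radius in Lemma \ref{hor lem}; note that the middle ``$1_{s\leq\rho<t}$'' term there decays even faster, like $(c/R)^2$, since the $\rho$-range stays of length $|t-s|\leq 2c^2$), combine this with the pointwise Gaussian size bound on shells, and interpolate to $L_{r'}$ via $\|K\|_{L_{r'}}^{r'}\leq\|K\|_{L_\infty}^{r'-1}\|K\|_{L_1}$. This yields the stronger pointwise inequality and then the $L_p$ bound directly from Fefferman--Stein plus the maximal theorem on $L_{p/r}$. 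What the paper gains is modularity: Lemma \ref{ext lem} is formulated abstractly and reused verbatim for $\rG_r$ in Theorem \ref{thm 5-3}. What your route gains is a sharper pointwise conclusion and avoidance of the nonstandard interpolation lemma, at the cost of the shell computation. One small remark: to invoke \eqref{fs thm} you need some a priori integrability of $\cG_r[f]$; you have $\cG_r[f]\in L_r(\fR^{d+1})$ from Theorem \ref{thm 5-2}, which suffices for the Fefferman--Stein inequality to upgrade to $L_p$ once $(\cG_r[f])^\sharp\in L_p$.
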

\begin{theorem}
					\label{thm 5-3}
Let $2 \leq r \leq p < \infty$.
Then there exists a positive constant $N(d,p,r,\kappa,K)$ such that
$$
\| \rG_r[g]\|_{L_p((0,\infty) \times \fR^{d}) } \leq N \|g\|_{\bL_{p,r}(\infty,l_2)} \qquad \forall g \in \tilde \bL_{r}(\infty,l_2) \cap \tilde \bL_{\infty,r}(\infty,l_2).
$$
\end{theorem}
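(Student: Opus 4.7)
The plan is to establish a pointwise sharp function bound and then conclude via the Fefferman-Stein and Hardy-Littlewood maximal theorems. Let $G(\rho,z) := \||g(\rho,z)|_{l_2}\|_{L_r(\Omega)}$, so that $\|G\|_{L_p(\fR^{d+1})} = \|g\|_{\bL_{p,r}(\infty,l_2)}$. I claim the pointwise inequality
$$
(\rG_r[g])^\sharp(t,x) \leq N\,[\cM(G^r)(t,x)]^{1/r}.
$$
Granted this, \eqref{fs thm} followed by the Hardy-Littlewood bound $\|\cM(G^r)\|_{L_{p/r}} \leq N\|G^r\|_{L_{p/r}}$, valid since $p/r>1$, yields
$$
\|\rG_r[g]\|_{L_p(\fR^{d+1})} \leq N\|(\rG_r[g])^\sharp\|_{L_p} \leq N\|\cM(G^r)\|_{L_{p/r}}^{1/r} \leq N\|g\|_{\bL_{p,r}(\infty,l_2)}.
$$
The degenerate case $p=r$ reduces to Theorem \ref{thm 5-4}.

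For the sharp function bound, I first reduce to $g\in \tilde\bH_c^\infty(\infty,l_2)$ by density (Lemma \ref{lem dense}), using the sublinearity inequality \eqref{20200108} together with Theorem \ref{thm 5-4}. Fix a cylinder $Q=Q_c(t_0,x_0)$ containing $(t,x)$, and split $g=g_1+g_2$ with $g_1:=g\cdot 1_{Q_{8c}(t_0,x_0)}$. Sublinearity gives
\begin{align*}
\frac{1}{|Q|^2}\iint_{Q\times Q}&|\rG_r[g](t_1,x_1)-\rG_r[g](t_2,x_2)|\,dt_1 dx_1 dt_2 dx_2 \\
&\leq \frac{2}{|Q|}\int_Q \rG_r[g_1]+\sup_{(t_1,x_1),(t_2,x_2)\in Q}|\rG_r[g_2](t_1,x_1)-\rG_r[g_2](t_2,x_2)|.
\end{align*}
The first term is handled by Theorem \ref{thm 5-4}: Jensen's inequality and $|Q_{8c}|\leq 8^{d+2}|Q|$ give $\frac{1}{|Q|}\int_Q \rG_r[g_1]\leq N[\cM(G^r)(t,x)]^{1/r}$.

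For the outer oscillation, write $\rG_r[g_2](t,x)=\|F[g_2](t,x)\|_{L_r(\Omega)}$ with
$$
F[g](t,x):=\left(\int_0^\infty \left|\int_{\fR^d} p_x(t,\rho,x-w)g(\rho,w)\,dw\right|_{l_2}^2 d\rho\right)^{1/2},
$$
cf.\ \eqref{202001272}. The reverse triangle inequality in $L_r(\Omega)$, the triangle inequality in $L^2(d\rho;l_2)$, Minkowski's inequality in the form $\|\cdot\|_{L_r(\Omega;L^2(d\rho))}\leq \|\cdot\|_{L^2(d\rho;L_r(\Omega))}$ -- which requires $r\geq 2$ -- and the generalized Minkowski inequality in $w$ combine to yield
$$
|\rG_r[g_2](t_1,x_1)-\rG_r[g_2](t_2,x_2)|^2 \leq \int_{-\infty}^\infty \left[\int_{\{w:(\rho,w)\notin Q_{8c}(t_0,x_0)\}} K(\rho,w)\,G(\rho,w)\,dw\right]^2 d\rho,
$$
where $K(\rho,w):=|p_x(t_1,\rho,x_1-w)-p_x(t_2,\rho,x_2-w)|$. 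Corollary \ref{cor hor 2} gives the unweighted bound $\int(\int K\,dw)^2\,d\rho\leq N$; the task is to carry the weight $G$ and produce $\cM(G^r)$.

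This weighted estimate is the main obstacle. The plan is to decompose $\fR^{d+1}\setminus Q_{8c}(t_0,x_0)$ dyadically into annuli $E_k:=Q_{2^{k+3}c}(t_0,x_0)\setminus Q_{2^{k+2}c}(t_0,x_0)$, $k\geq 0$. On $E_k$, the mean value theorem applied to $p_x$, together with the kernel estimates \eqref{ker est 1}--\eqref{ker est 2} and the Gaussian-to-polynomial conversion $e^{-c_0 s^2}\leq C_m s^{-m}$, gives the uniform bound $K(\rho,w)\leq N c(2^k c)^{-d-2}$. H\"older's inequality in $w$ with exponents $(r/(r-1),r)$, followed by H\"older in $\rho$ over an interval of length $\approx (2^k c)^2$ with exponents $(r/(r-2),r/2)$ (again using $r\geq 2$), together with $\int_{Q_{2^{k+3}c}}G^r\leq |Q_{2^{k+3}c}|\cdot \cM(G^r)(t,x)$, produce the clean scaling
$$
\int_{E_k}\left[\int K G\,dw\right]^2 d\rho \leq N\,c^2(2^k c)^{-2(d+2)/r-2}\left(\int_{Q_{2^{k+3}c}}G^r\right)^{2/r} \leq N\cdot 2^{-2k}\,[\cM(G^r)(t,x)]^{2/r}.
$$
Summing the geometric series in $k$ and taking the square root completes the sharp function estimate, whence the theorem.
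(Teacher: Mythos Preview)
Your argument is essentially correct but follows a genuinely different route from the paper. The paper does \emph{not} prove the strong pointwise bound $(\rG_r[g])^\sharp \leq N[\cM(G^r)]^{1/r}$; instead it establishes the weaker estimate of Corollary~\ref{2020010621},
\[
(\rG_r[g_1+g_2])^\sharp \leq 2\cM(\rG_r[g_1]) + N\|g_2\|_{\bL_{\infty,r}(\infty,l_2)},
\]
valid for \emph{any} splitting $g=g_1+g_2$, and then feeds this into the abstract interpolation Lemma~\ref{ext lem}, which chooses $g_1,g_2$ via the \emph{level-set} decomposition $g_1=g\,1_{\{\|g\|_{L_r(\Omega;l_2)}>\delta\lambda\}}$. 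Your approach instead fixes the \emph{geometric} cutoff $g_1=g\,1_{Q_{8c}}$ and carries out a dyadic annular estimate on the far piece. The payoff of your route is a sharper pointwise inequality and no need for Lemma~\ref{ext lem}; the payoff of the paper's route is modularity (the same Lemma~\ref{ext lem} handles both $\cG_r$ and $\rG_r$) and avoidance of the dyadic computation, relying only on the $L_\infty$ endpoint from Lemma~\ref{lem2020010805} and Lemma~\ref{lem2020010804}.

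One point to tighten: the claimed uniform bound $K(\rho,w)\leq N c(2^kc)^{-d-2}$ on $E_k$ does not follow from the mean value theorem alone, because $p_x(t,\rho,\cdot)$ carries the indicator $1_{\rho<t}$ and is therefore discontinuous in $t$ at $t=\rho$. When $\rho$ lies between $t_1$ and $t_2$ (which forces $|\rho-t_0|<c^2$ and hence $|w-x_0|>2^{k+2}c$ on $E_k$), you must bound $|p_x(t_1,\rho,x_1-w)|$ directly rather than a difference; this is exactly the $1_{s\leq\rho<t}$ term the paper isolates in the proof of Lemma~\ref{hor lem}. Here $s:=t_1-\rho\in(0,2c^2]$ while $|x_1-w|\gtrsim 2^k c$, so the Gaussian factor gives $|p_x|\leq N s^{-(d+1)/2}\exp(-c_0(2^kc)^2/s)\leq N c^{-(d+1)}e^{-c_0'2^{2k}}$, which after your H\"older steps yields a contribution decaying exponentially in $k$ --- better than the $2^{-2k}$ from the main piece. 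So the gap is purely cosmetic, but you should flag this split explicitly.
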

The proofs of Theorem \ref{thm 5} and Theorem \ref{thm 5-3} will be given in the last of this section.
To prove these theorems, we need the following preliminaries.  
Recall the cylinders $Q_c(t_0,x_0) := \left(t_0 - c^2, t_0+ c^2\right) \times B_c(x_0)$ in $\fR^{d+1}$ with $(t_0,x_0) \in \fR^{d+1}$ and $c>0$.  
\begin{lemma}
						\label{lem20200108}
Let $r \in [1,\infty)$, $c \in (0,\infty)$, $(t_0,x_0) \in \fR^{d+1}$, and $(t_1, x_1) \in Q_c(t_0,x_0)$. Then for all $f_1, f_2 \in \bL_r(\infty)$, 
\begin{align}
							\notag
& \aint_{Q_c(t_0,x_0)} \aint_{Q_c(t_0,x_0)} | \cG_r \left[ f_1 +f_2 \right](t,x) - \cG_r \left[f_1+f_2 \right](s,y) | dtdxdsdy \\
							\label{2019092802}
&\qquad \leq 2 \cM ( \cG_r [f_1] ) (t_1,x_1)  + \aint_{Q_c(t_0,x_0)} \aint_{Q_c(t_0,x_0)} G_r[f_2](t,x,s,y) dtdxdsdy. 
\end{align}
\end{lemma}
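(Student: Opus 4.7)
The plan is to combine three ingredients: a reverse triangle inequality for the $L_r(\Omega)$-norm, the sublinearity relations of $G_r$ and $\cG_r$ already recorded in Remark~\ref{rmk 3.6}(ii), and the trivial bound of averages over cylinders by the Hardy--Littlewood maximal function.

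First, observe that $\cG_r[h](t,x)$ is, by construction, the $L_r(\Omega)$-norm of the random variable
$$X_h(t,x) := \int_0^\infty\!\!\int_{\fR^d} \bar p_{xx}(\omega,t,\rho,z)\, h(\rho,x-z)\,dz\,d\rho,$$
whereas $G_r[h](t,x,s,y)$ is the $L_r(\Omega)$-norm of the difference $X_h(t,x)-X_h(s,y)$. The reverse triangle inequality $\bigl|\|A\|-\|B\|\bigr|\leq \|A-B\|$ therefore yields the pointwise bound
$$\bigl|\cG_r[h](t,x) - \cG_r[h](s,y)\bigr| \leq G_r[h](t,x,s,y).$$

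Next, I would apply this with $h=f_1+f_2$ and then invoke the two chained inequalities from \eqref{2019092801} in Remark~\ref{rmk 3.6}(ii), namely $G_r[f_1+f_2]\leq G_r[f_1]+G_r[f_2]$ and $G_r[f_1](t,x,s,y)\leq \cG_r[f_1](t,x)+\cG_r[f_1](s,y)$, to obtain the key pointwise estimate
$$\bigl|\cG_r[f_1+f_2](t,x) - \cG_r[f_1+f_2](s,y)\bigr| \leq \cG_r[f_1](t,x) + \cG_r[f_1](s,y) + G_r[f_2](t,x,s,y).$$
Taking the double average $\aint_{Q_c(t_0,x_0)}\aint_{Q_c(t_0,x_0)}(\cdot)\,dtdxdsdy$ of both sides, the first two terms on the right reduce to the single average $\aint_{Q_c(t_0,x_0)}\cG_r[f_1]$; since $(t_1,x_1)\in Q_c(t_0,x_0)$ and $\cM$ is defined as the supremum over all cylinders containing the base point, each such single average is bounded above by $\cM(\cG_r[f_1])(t_1,x_1)$. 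Summing the two contributions yields exactly $2\cM(\cG_r[f_1])(t_1,x_1)$, while the $G_r[f_2]$ term passes through the averaging unchanged, producing \eqref{2019092802}.

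I do not foresee a serious obstacle: the lemma is a bookkeeping step that repackages the triangle inequality in the form required by the forthcoming Fefferman--Stein sharp-function argument. The only mild subtlety is that $\cG_r$ and $G_r$ were originally defined on the dense subspace $\bH_c^\infty(\infty)$ and only continuously extended to $\bL_r(\infty)$ via Theorem~\ref{thm 5-2}. However, the reverse triangle inequality on $L_r(\Omega)$-norms and the sublinearity in \eqref{2019092801} are preserved under these continuous extensions, so no additional argument is needed beyond citing continuity.
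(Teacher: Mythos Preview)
Your proposal is correct and follows essentially the same approach as the paper: the reverse triangle inequality for the $L_r(\Omega)$-norm, the sublinearity relations from \eqref{2019092801}, and the bound of cylinder averages by the maximal function are exactly the ingredients the paper uses. The only difference is that the paper makes the passage from smooth data to general $f_1,f_2\in\bL_r(\infty)$ explicit via Sobolev mollifiers (its Step~2), whereas you appeal to continuity of the extensions; note that Theorem~\ref{thm 5-2} only extends $\cG_r$, not $G_r$, so the mollification argument is what actually carries the pointwise inequality to the limit.
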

\begin{proof}
Let $f_1, f_2 \in \bL_r(\infty)$.

\vspace{2mm}
{\bf Step 1} We additionally assume that $f_1, f_2 \in \bH_r^2(\infty)$. 
\vspace{2mm}

Let $(t,x), (s,y) \in Q_c(t_0,x_0)$. Due to \eqref{20200127}, $(i=1,2)$
\begin{align*}
\cG_r [f_i](t,x) 
&=\left(\bE \left[  \left|\int_{0}^\infty  \left[\int_{\fR^d} \bar p_{xx}(\omega,t,\rho , z)  f_i(\rho,x-z) dz \right] d\rho \right|^r \right] \right)^{1/r}.
\end{align*}
Then by Minkowski's inequality and \eqref{2019092801},
\begin{align*}
&| \cG_r \left[ f_1 +f_2 \right](t,x) - \cG_r \left[f_1+f_2 \right](s,y) | \\
&\leq |G_r[f_1+f_2](t,x,s,y)| \\
&\leq \cG_r[f_1](t,x) + \cG_r[f_1](s,y) + G_r[f_2](t,x,s,y).
\end{align*}
Taking the mean average  to both sides above, we have
\begin{align*}
& \aint_{Q_c(t_0,x_0)} \aint_{Q_c(t_0,x_0)} | \cG_r \left[ f_1 +f_2 \right](t,x) - \cG_r \left[f_1+f_2 \right](s,y) | dtdxdsdy \\
&\qquad \leq 2 \aint_{Q_c(t_0,x_0)} f_1(t,x)dtdx  + \aint_{Q_c(t_0,x_0)} \aint_{Q_c(t_0,x_0)} G_r[f_2](t,x,s,y) dtdxdsdy .
\end{align*}
Since it is obvious that
$$
 \aint_{Q_c(t_0,x_0)} f_1(t,x)dtdx \leq \cM( \cG_r[f_1]) (t_1,x_1),
 $$
we have \eqref{2019092802}.

\vspace{2mm}
{\bf Step 2} (General case) $f_1, f_2 \in \bL_r(\infty)$.
\vspace{2mm}

We use the Sobolev mollifiers and approximations. Choose a nonnegative $\phi \in C_c^\infty(\fR^d)$ so that
$$
\int_{\fR^d}\phi(x)dx = 1.
$$
For $\varepsilon>0$, define
$$
\phi^\varepsilon (x) = \frac{1}{\varepsilon^d} \phi( x/\varepsilon)
$$
and $(i=1,2)$ 
$$
f_i^{\varepsilon}(t,x) :=  f_i(t,\cdot) \ast \phi^\varepsilon (\cdot) (x) := \int_{\fR^d} f(t,y) \phi(x-y)dy. 
$$
On the basis of properties of Sobolev mollifiers, it is easy to check that 
$$
\|f_i^\varepsilon - f_i\|_{\bL_r(\infty)} \to 0 \quad \text{as} \quad \varepsilon \downarrow 0.
$$
Moreover, \eqref{fs thm} guarantees the continuity of the operator $ f \to \cM f$ on $\bL_r(\infty)$.
Remind that  \eqref{2019092802} holds with $f_1^\varepsilon $ and $f_2^\varepsilon$ for all $\varepsilon>0$  by Step 1. 
Finally taking $\varepsilon \downarrow 0$ we have \eqref{2019092802} for $f_1, f_2 \in \bL_r(\infty)$. 
The lemma is proved. 
\end{proof}

\begin{lemma}
Let $r \in [2,\infty)$, $c \in (0,\infty)$, $(t_0,x_0) \in \fR^{d+1}$, and $(t_1, x_1) \in Q_c(t_0,x_0)$. 
Then for all  $g_1, g_2 \in \tilde \bL_r(\infty,l_2)$, we have
\begin{align*}
& \aint_{Q_c(t_0,x_0)} \aint_{Q_c(t_0,x_0)} | \rG_r \left[ g_1 +g_2 \right](t,x) - \rG_r \left[g_1+g_2 \right](s,y) | dtdxdsdy \\
&\qquad \leq 2 \cM ( \rG_r [g_1] ) (t_1,x_1)  + \aint_{Q_c(t_0,x_0)} \aint_{Q_c(t_0,x_0)} \bG_r g_2(t,x,s,y) dtdxdsdy 
\end{align*}
\end{lemma}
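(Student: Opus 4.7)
The plan is to follow the two-step structure of the proof of Lemma~\ref{lem20200108} verbatim, replacing $\cG_r,G_r$ by their stochastic counterparts $\rG_r,\bG_r$ and invoking the subadditivity chain \eqref{20200108} from Remark~\ref{rmk 3.6}(ii) in place of \eqref{2019092801}. The kernel-regularity needed on the $\rG_r$ side is one $x$-derivative rather than two, so the auxiliary regular class in Step~1 will be $\tilde\bH_r^1(\infty,l_2)$ rather than $\bH_r^2(\infty)$.

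First I would handle a regular case by assuming $g_1,g_2\in\tilde\bH_r^1(\infty,l_2)$ so that, via \eqref{202001272}, each $\rG_r[g_i](t,x)$ is represented pointwise as the $L_r(\Omega)$-norm of the $L_2((0,\infty);l_2)$-norm of $A_i(t,x,\rho):=\int_{\fR^d}p_x(t,\rho,z)g_i(\rho,x-z)\,dz$. Applying the reverse triangle inequality first in $L_r(\Omega)$ and then in $L_2((0,\infty);l_2)$ gives the pointwise bound
\[
|\rG_r[g_1+g_2](t,x)-\rG_r[g_1+g_2](s,y)|\leq \bG_r[g_1+g_2](t,x,s,y),
\]
and \eqref{20200108} bounds this further by $\rG_r[g_1](t,x)+\rG_r[g_1](s,y)+\bG_r[g_2](t,x,s,y)$. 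Averaging over $Q_c(t_0,x_0)\times Q_c(t_0,x_0)$ and using the elementary inequality $\aint_{Q_c(t_0,x_0)}\rG_r[g_1](t,x)\,dt\,dx\leq\cM(\rG_r[g_1])(t_1,x_1)$ (valid since $(t_1,x_1)\in Q_c(t_0,x_0)$) yields the claimed bound in the regular case.

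For arbitrary $g_1,g_2\in\tilde\bL_r(\infty,l_2)$ I would mollify in the spatial variable only, as in Step~2 of Lemma~\ref{lem20200108}: with $\phi\in C_c^\infty(\fR^d)$ nonnegative and $\int\phi=1$, set $g_i^\varepsilon(t,x):=g_i(t,\cdot)*\phi^\varepsilon(x)$. Spatial convolution preserves $\cP\times\cB(\fR^d)$-measurability, so $g_i^\varepsilon\in\tilde\bH_r^1(\infty,l_2)$, and standard estimates give $g_i^\varepsilon\to g_i$ in $\tilde\bL_r(\infty,l_2)$. Theorem~\ref{thm 5-4} promotes this to $\rG_r[g_i^\varepsilon]\to\rG_r[g_i]$ in $L_r((0,\infty)\times\fR^d)$; \eqref{fs thm} transfers the convergence to $\cM(\rG_r[g_1^\varepsilon])$; and the pointwise bound $|\bG_r[g_2^\varepsilon]-\bG_r[g_2]|(t,x,s,y)\leq\rG_r[g_2-g_2^\varepsilon](t,x)+\rG_r[g_2-g_2^\varepsilon](s,y)$, obtained from \eqref{20200108}, together with Theorem~\ref{thm 5-4}, yields convergence of the averaged $\bG_r$-term. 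Sending $\varepsilon\downarrow 0$ in the Step~1 inequality concludes the proof.

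The main obstacle, such as it is, is that the mollification must preserve the predictability built into $\tilde\bL_r(\infty,l_2)$, which forbids smoothing in time. Spatial mollification suffices precisely because the kernel $p_x(t,\rho,z)$ is deterministic and the predictable structure of $g$ enters only through adaptedness in $\rho$, which convolution with $\phi^\varepsilon$ leaves intact; this same rigidity in the time variable is what forces Step~1 to proceed via the iterated-integral representation \eqref{202001272}, and hence to assume some initial spatial regularity on $g_i$.
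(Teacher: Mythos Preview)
Your proposal is correct and takes essentially the same approach as the paper: the paper's proof simply says ``follow the proof of Lemma~\ref{lem20200108}, using \eqref{20200108} in place of \eqref{2019092801},'' and you have carried this out in full, with the appropriate substitutions ($\tilde\bH_r^1(\infty,l_2)$ for $\bH_r^2(\infty)$, \eqref{202001272} for \eqref{20200127}, Theorem~\ref{thm 5-4} for Theorem~\ref{thm 5-2}). Your remarks on predictability under spatial mollification and on the convergence of the $\bG_r$-term are more explicit than the paper's, but add nothing beyond what the paper's one-line proof intends.
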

\begin{proof}
The proof of this lemma is almost identical to that of Lemma \ref{lem20200108}.
We only mention that \eqref{20200108} is used in place of \eqref{2019092801}.

\end{proof}

\begin{lemma}
						\label{lem2020010802}
Let $r \in (1,\infty)$, $c \in (0,\infty)$, $(t_0,x_0) \in \fR^{d+1}$, and $f \in   \bL_r(\infty) \cap \bL_{\infty,r}(\infty)$. 
Assume that $f$ vanishes outside of $Q_{8c}(t_0,x_0)$ $(a.s.)$.
Then there exists a constant $N=N(d,r,\kappa,K)$ such that
\begin{align}
							\label{2019111901}
 \aint_{Q_c(t_0,x_0)}| \cG_r[f](t,x)| dtdx 
 \leq N\| f \|_{\bL_{\infty,r}(\infty)}.
\end{align}
\end{lemma}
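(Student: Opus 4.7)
The plan is to combine a H\"older step with the global $L_r$-bound from Theorem~\ref{thm 5-2}, and then exploit the compact support of $f$ together with the essential supremum norm in $\bL_{\infty,r}(\infty)$. In particular, no H\"ormander-type cancellation (Corollary~\ref{cor hor 1}) is needed here — only the ``solid'' estimate — since the output is being integrated over the same cylinder where the input is concentrated.

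First I would apply Jensen's (or H\"older's) inequality in the $(t,x)$ variable to lift the averaged $L_1$-norm on $Q_c(t_0,x_0)$ to an averaged $L_r$-norm:
\begin{align*}
\aint_{Q_c(t_0,x_0)} |\cG_r[f](t,x)|\, dt\, dx
\leq \left( \aint_{Q_c(t_0,x_0)} |\cG_r[f](t,x)|^r\, dt\, dx \right)^{1/r}
\leq \frac{\|\cG_r[f]\|_{L_r(\fR^{d+1})}}{|Q_c(t_0,x_0)|^{1/r}}.
\end{align*}
Next I would invoke Theorem~\ref{thm 5-2}, which provides the continuous extension of $\cG_r$ and the bound $\|\cG_r[f]\|_{L_r(\fR^{d+1})}\leq N(d,r,\kappa,K)\|f\|_{\bL_r(\infty)}$, to replace the left-hand side estimate by $N|Q_c(t_0,x_0)|^{-1/r}\|f\|_{\bL_r(\infty)}$.

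The final step uses the hypothesis that $f$ vanishes outside $Q_{8c}(t_0,x_0)$ almost surely. This gives
\begin{align*}
\|f\|_{\bL_r(\infty)}^r
= \int_{Q_{8c}(t_0,x_0)} \bE\bigl[|f(t,x)|^r\bigr]\, dt\, dx
\leq |Q_{8c}(t_0,x_0)|\,\|f\|_{\bL_{\infty,r}(\infty)}^r.
\end{align*}
Combining the three estimates yields
\begin{align*}
\aint_{Q_c(t_0,x_0)} |\cG_r[f](t,x)|\, dt\, dx
\leq N\left(\frac{|Q_{8c}(t_0,x_0)|}{|Q_c(t_0,x_0)|}\right)^{1/r}\|f\|_{\bL_{\infty,r}(\infty)},
\end{align*}
and since the ratio $|Q_{8c}|/|Q_c|=8^{d+2}$ is a dimensional constant depending only on $d$, we absorb it into $N$ and obtain \eqref{2019111901}.

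There is no real obstacle here; the argument is a short scaling computation. The only subtle point is making sure Theorem~\ref{thm 5-2} is applicable, which requires $f\in\bL_r(\infty)$ — this is given — and that $\cG_r[f]$ is interpreted via the continuous extension of the operator (so that the bound $\|\cG_r[f]\|_{L_r}\le N\|f\|_{\bL_r(\infty)}$ is literally the content of Theorem~\ref{thm 5-2}). The H\"older inequality step legitimately requires $r\ge 1$, which is automatic from the hypothesis $r\in(1,\infty)$.
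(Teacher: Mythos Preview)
Your proof is correct and follows essentially the same route as the paper: Jensen's inequality to pass from the $L_1$-average to the $L_r$-average on $Q_c$, then Theorem~\ref{thm 5-2} for the global $L_r$-bound on $\cG_r[f]$, and finally the support assumption to convert $\|f\|_{\bL_r(\infty)}$ into $|Q_{8c}|^{1/r}\|f\|_{\bL_{\infty,r}(\infty)}$. The paper's own argument is line-for-line the same, including the observation that the ratio $|Q_{8c}|/|Q_c|$ is a dimensional constant.
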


\begin{proof}
By Theorem \ref{thm 5-2}, we have
$$
\int_0^\infty \int_{\fR^d}  |\cG_r[f] (t,x)|^r dtdx \leq N  \int_0^\infty \int_{\fR^d} \bE |f(t,x)|^r dtdx.
$$
Therefore recalling Jensen's inequality and the vanishing assumption on $f$, we get
\begin{align*}
 \aint_{Q_c(t_0,x_0)}| \cG_r \left[ f \right](t,x)| dtdx 
&\leq  \left(\aint_{Q_c(t_0,x_0)} | \cG_r \left[ f \right](t,x)|^r dtdx \right)^{1/r} \\
& \leq N \left( \frac{1}{|Q_c(t_0,x_0) | } \int_0^\infty \int_{\fR^d} \bE| f(t,x)|^r dtdx \right)^{1/r} \\
& \leq N \left( \frac{1}{|Q_c(t_0,x_0) | } \int_{Q_{8c}(t_0,x_0)} \bE| f(t,x)|^r dtdx \right)^{1/r} \\
&\leq N  \sup_{(t,x) \in (0,\infty) \times \fR^d} \left(\bE| f(t,x)|^r\right)^{1/r}.
\end{align*}
The lemma is proved. 
\end{proof}

\begin{lemma}
							\label{lem2020010805}
Let $r \in [2,\infty)$, $c \in (0,\infty)$, $(t_0,x_0) \in \fR^{d+1}$, and $g \in   \tilde \bL_r(\infty,l_2) \cap \tilde \bL_{\infty,r}(\infty,l_2)$. 
Assume that $g$ vanishes outside of $Q_{8c}(t_0,x_0)$ $(a.s.)$.
Then there exists a constant $N=N(d,r,\kappa,K)$ such that
\begin{align*}
 \aint_{Q_c(t_0,x_0)}| \rG_r[g](t,x)| dtdx 
 \leq \| g \|_{\bL_{\infty,r}(\infty,l_2)}.
\end{align*}
\end{lemma}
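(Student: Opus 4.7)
The plan is to mimic the proof of Lemma \ref{lem2020010802} verbatim, just replacing the role of $\cG_r$, $f$, and Theorem \ref{thm 5-2} with $\rG_r$, $g$, and Theorem \ref{thm 5-4}. The key enabling fact is that Theorem \ref{thm 5-4} supplies the $L_r$-boundedness estimate $\|\rG_r[g]\|_{L_r((0,\infty)\times \fR^d)} \leq N \|g\|_{\bL_r(\infty,l_2)}$, which is the exact analog (for $r\in[2,\infty)$) of the bound used for $\cG_r$ in the preceding lemma.

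First I would apply Jensen's inequality to promote the $L_1$ average on $Q_c(t_0,x_0)$ to the $L_r$ average:
\begin{equation*}
\aint_{Q_c(t_0,x_0)}|\rG_r[g](t,x)|\,dtdx \leq \left(\aint_{Q_c(t_0,x_0)}|\rG_r[g](t,x)|^r\,dtdx\right)^{1/r}.
\end{equation*}
Next I would use that $g$ vanishes outside $Q_{8c}(t_0,x_0)$ almost surely, so Theorem \ref{thm 5-4} applied to $g$ gives
\begin{equation*}
\int_0^\infty\int_{\fR^d}|\rG_r[g](t,x)|^r\,dtdx \leq N\int_0^\infty\int_{\fR^d}\bE|g(t,x)|_{l_2}^r\,dtdx = N\int_{Q_{8c}(t_0,x_0)}\bE|g(t,x)|_{l_2}^r\,dtdx.
\end{equation*}

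Chaining the two displays and observing that $|Q_{8c}(t_0,x_0)| = 8^{d+2}|Q_c(t_0,x_0)|$, I get
\begin{equation*}
\aint_{Q_c(t_0,x_0)}|\rG_r[g](t,x)|\,dtdx \leq N\left(\frac{1}{|Q_c(t_0,x_0)|}\int_{Q_{8c}(t_0,x_0)}\bE|g(t,x)|_{l_2}^r\,dtdx\right)^{1/r},
\end{equation*}
and bounding $\bE|g(t,x)|_{l_2}^r$ by its essential supremum in $(t,x)$ immediately yields
\begin{equation*}
\aint_{Q_c(t_0,x_0)}|\rG_r[g](t,x)|\,dtdx \leq N\sup_{(t,x)\in(0,\infty)\times\fR^d}\left(\bE|g(t,x)|_{l_2}^r\right)^{1/r} = N\|g\|_{\bL_{\infty,r}(\infty,l_2)}.
\end{equation*}

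There is no real obstacle here; the only thing worth being careful about is making sure Theorem \ref{thm 5-4} is applicable, which requires $r\in[2,\infty)$ and $g\in\tilde\bL_r(\infty,l_2)$, both of which are part of the hypotheses. The restriction $r\geq 2$ (as opposed to $r>1$ in Lemma \ref{lem2020010802}) comes from the use of the Burkholder--Davis--Gundy inequality in Theorem \ref{thm 5-4} and is inherited automatically.
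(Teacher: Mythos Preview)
Your proposal is correct and is exactly the approach the paper takes: the paper's own proof simply states that the argument is identical to that of Lemma~\ref{lem2020010802}, with Theorem~\ref{thm 5-4} used in place of Theorem~\ref{thm 5-2}. Your write-up in fact spells out those steps in slightly more detail than the paper does.
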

\begin{proof}
The proof of this lemma is almost identical to that of Lemma \ref{lem2020010802}.
We only mention that Theorem \ref{thm 5-4} is used in place of Theorem \ref{thm 5-2}.
\end{proof}

\begin{lemma}
						\label{lem2020010803}
Let $r \in (1,\infty)$, $c \in (0,\infty)$, $(t_0,x_0) \in \fR^{d+1}$, and $f \in   \bL_r(\infty) \cap \bL_{\infty,r}(\infty)$. 
Assume that $f$ vanishes on  $Q_{8c}(t_0,x_0)$ $(a.s.)$.
Then there exists a constant $N=N(d,r,\kappa,K)$ 
\begin{align}
							\label{2019110501}
 \aint_{Q_c(t_0,x_0)}   \aint_{Q_c(t_0,x_0)}| G_r[f](t,x,s,y) | dt  dx ds dy 
  \leq N\| f \|_{\bL_{\infty,r}(\infty)}.
\end{align}
\end{lemma}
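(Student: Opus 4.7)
The plan is to parallel the proof of Lemma~\ref{lem2020010802}, replacing the diagonal operator $\cG_r$ by the off-diagonal $G_r$ and exploiting the \emph{cancellation} between the two kernel evaluations through the $\omega$-uniform H\"ormander-type tail estimate \eqref{2020012602} of Remark~\ref{rmk 20200129}, which here plays for $G_r$ the role that Theorem~\ref{thm 5-2} played for $\cG_r$ in the previous lemma.

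Fix arbitrary $(t,x), (s,y) \in Q_c(t_0,x_0)$. Performing the change of variables $w = x - z$ in the first integral and $w = y - z$ in the second integral defining $G_r[f](t,x,s,y)$ rewrites the random quantity inside the $L_r(\Omega)$-norm as
\begin{equation*}
\int_0^\infty \int_{\fR^d} \bigl(\bar p_{xx}(\omega,t,\rho,x-w) - \bar p_{xx}(\omega,s,\rho,y-w)\bigr) f(\rho,w) \, dw \, d\rho,
\end{equation*}
and the vanishing hypothesis restricts the effective $(\rho,w)$-domain to $\fR^{d+1} \setminus Q_{8c}(t_0,x_0)$. Combining the triangle inequality pointwise in $\omega$, generalized Minkowski in $L_r(\Omega)$, and domination of the random kernel factor by its essential supremum in $\omega$ then yields
\begin{equation*}
G_r[f](t,x,s,y) \leq \int_{\fR^{d+1} \setminus Q_{8c}(t_0,x_0)} \sup_{\omega \in \Omega} \bigl|\bar p_{xx}(\omega,t,\rho,x-w) - \bar p_{xx}(\omega,s,\rho,y-w)\bigr| \bigl(\bE|f(\rho,w)|^r\bigr)^{1/r} dw \, d\rho.
\end{equation*}
Dominating $(\bE|f(\rho,w)|^r)^{1/r}$ by $\|f\|_{\bL_{\infty,r}(\infty)}$ and invoking \eqref{2020012602} produces the pointwise estimate $G_r[f](t,x,s,y) \leq N(d,r,\kappa,K) \|f\|_{\bL_{\infty,r}(\infty)}$, uniform in $(t,x),(s,y) \in Q_c(t_0,x_0)$, whereupon averaging over both copies of the cylinder gives \eqref{2019110501}.

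For general $f \in \bL_r(\infty) \cap \bL_{\infty,r}(\infty)$ the pointwise manipulation of the integrand needs justification, so I would first run the argument on spatial mollifications $f^\varepsilon = f \ast \phi^\varepsilon$ --- which still vanish on the slightly shrunk cylinder $Q_{8c - \varepsilon}(t_0,x_0)$ --- and then pass $\varepsilon \downarrow 0$ using the continuity of $\cG_r$ on $\bL_r(\infty)$ from Theorem~\ref{thm 5-2} together with the subadditive control \eqref{2019092801} that dominates $G_r$ by two copies of $\cG_r$. I do not foresee any substantive obstacle: the main analytic input \eqref{2020012602} is already in hand, and the proof reduces to the essentially algebraic combination outlined above.
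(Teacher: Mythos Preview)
Your proposal is correct and follows essentially the same route as the paper: mollify to justify the pointwise integral representation, apply Minkowski in $L_r(\Omega)$ together with the vanishing hypothesis to reduce $G_r[f](t,x,s,y)$ to the tail integral of $\sup_\omega|\bar p_{xx}(\omega,t,\rho,x-w)-\bar p_{xx}(\omega,s,\rho,y-w)|$ times $\|f\|_{\bL_{\infty,r}(\infty)}$, and then invoke \eqref{2020012602}. The only cosmetic difference is that the paper writes the change of variables implicitly and reduces to $f\in\bH^1_r(\infty)\cap\bL_{\infty,r}(\infty)$ in one line, whereas you spell out the substitution $w=x-z$, $w=y-z$ and the $\varepsilon$-shrinkage of the support more explicitly.
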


\begin{proof}
By using the Sobolev mollifiers used in the proof of Lemma \ref{lem20200108} with the observation that
$\| f^\varepsilon \|_{\bL_{\infty,r}(\infty)} \leq \| f \|_{\bL_{\infty,r}(\infty)}$ for all $\varepsilon>0$,
we may assume that $f \in   \bH^1_r(\infty) \cap \bL_{\infty,r}(\infty)$.
Let $(t,x), (s,y) \in Q_c(t_0,x_0)$. Then by Minkowski's inequality and the vanishing assumption on $f$,
\begin{align*}
&G_r[f](t,x,s,y) \\
&\leq  \left[\int_{ \fR^{d+1} \setminus Q_{8c}(t_0,x_0)}  \sup_{\omega \in \Omega} \left| \bar p_{xx}(\omega,t,\rho , x-z)  - \bar p_{xx}(\omega,s,\rho , y-z)  \right|   dz d\rho \right]\\
&\qquad \times  \sup_{ (\rho,z) \in \fR^{d+1} }  \left(\bE \left[  \left|f(\rho,z)  \right|^r \right] \right)^{1/r} .
\end{align*}
Therefore by \eqref{2020012602} and taking the mean average, we obtain \eqref{2019110501}.
\end{proof}

\begin{corollary}
						\label{2020010610}
Let $r \in (1,\infty)$, $(t_1,x_1) \in \fR^{d+1}$,  and $f_1,f_2 \in   \bL_r(\infty) \cap \bL_{\infty,r}(\infty)$. 
Then there exists a positive constant $N=N(d,\kappa,K,r)$ such that 
\begin{align}
 (\cG_r [f_1+f_2])^\sharp(t_1,x_1) 
						\label{2019111910}	
\leq  2 \cM ( \cG_r [f_1] ) (t_1,x_1) + N\| f_2 \|_{\bL_{\infty,r}(\infty)}
\end{align}
\end{corollary}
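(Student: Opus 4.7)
The plan is a direct sharp-function reduction: combine the difference estimate of Lemma \ref{lem20200108} with the two localization lemmas, Lemma \ref{lem2020010802} and Lemma \ref{lem2020010803}, via an inside/outside splitting of $f_2$ relative to the cylinder in the BMO supremum. Fix an arbitrary cylinder $Q = Q_c(t_0, x_0)$ containing $(t_1, x_1)$. Writing $h = \cG_r[f_1+f_2]$, the standard identity $h(t,x) - h_Q = \aint_Q (h(t,x) - h(s,y))dsdy$ yields
$$\aint_Q |h - h_Q|dtdx \le \aint_Q \aint_Q |h(t,x) - h(s,y)|dtdxdsdy.$$
Applying Lemma \ref{lem20200108} to the right-hand side immediately produces
$$\aint_Q |h - h_Q|dtdx \le 2\,\cM(\cG_r[f_1])(t_1,x_1) + \aint_Q \aint_Q G_r[f_2](t,x,s,y)\,dtdxdsdy.$$

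To control the remaining double average I would split $f_2 = f_2^{\mathrm{in}} + f_2^{\mathrm{out}}$, where $f_2^{\mathrm{in}} := f_2 \cdot 1_{Q_{8c}(t_0,x_0)}$ and $f_2^{\mathrm{out}} := f_2 \cdot 1_{\fR^{d+1}\setminus Q_{8c}(t_0,x_0)}$. By the sublinearity observed in Remark \ref{rmk 3.6}(ii),
$$G_r[f_2](t,x,s,y) \le \cG_r[f_2^{\mathrm{in}}](t,x) + \cG_r[f_2^{\mathrm{in}}](s,y) + G_r[f_2^{\mathrm{out}}](t,x,s,y).$$
Lemma \ref{lem2020010802} applies to $f_2^{\mathrm{in}}$ (it vanishes outside $Q_{8c}(t_0,x_0)$) and bounds each of the first two averages over $Q$ by $N\|f_2^{\mathrm{in}}\|_{\bL_{\infty,r}(\infty)}$; Lemma \ref{lem2020010803} applies to $f_2^{\mathrm{out}}$ (it vanishes on $Q_{8c}(t_0,x_0)$) and bounds the double average of the third term by $N\|f_2^{\mathrm{out}}\|_{\bL_{\infty,r}(\infty)}$. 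Since both $\|f_2^{\mathrm{in}}\|_{\bL_{\infty,r}(\infty)}$ and $\|f_2^{\mathrm{out}}\|_{\bL_{\infty,r}(\infty)}$ are dominated by $\|f_2\|_{\bL_{\infty,r}(\infty)}$, summing yields
$$\aint_Q \aint_Q G_r[f_2](t,x,s,y)\,dtdxdsdy \le N\|f_2\|_{\bL_{\infty,r}(\infty)}.$$

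Combining the two displayed estimates gives
$$\aint_Q |\cG_r[f_1+f_2] - (\cG_r[f_1+f_2])_Q|\,dtdx \le 2\,\cM(\cG_r[f_1])(t_1,x_1) + N\|f_2\|_{\bL_{\infty,r}(\infty)},$$
and taking the supremum over all cylinders $Q$ containing $(t_1,x_1)$ delivers \eqref{2019111910}. There is no real obstacle here: the preceding three lemmas were tailored to this splitting, so the only care needed is to make sure the cylinder radius in the sharp maximal function matches the $Q_{8c}$-radius used in the localization lemmas, which it does by construction.
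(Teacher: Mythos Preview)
Your proof is correct and follows essentially the same approach as the paper: apply Lemma \ref{lem20200108} to reduce to bounding the double average of $G_r[f_2]$, then split $f_2$ into its parts inside and outside $Q_{8c}(t_0,x_0)$ and invoke Lemmas \ref{lem2020010802} and \ref{lem2020010803} via the sublinearity \eqref{2019092801}. The only cosmetic difference is that you spell out the standard inequality $\aint_Q |h-h_Q|\le \aint_Q\aint_Q|h(t,x)-h(s,y)|\,dtdxdsdy$ explicitly, whereas the paper leaves it implicit when citing \eqref{2019092802}.
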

\begin{proof}
We may assume that $f_1,f_2 \in   \bH^1_r(\infty) \cap \bL_{\infty,r}(\infty)$.
Let $Q_c(t_0,x_0)$ be a cylinder containing $(t_1,x_1)$. 
Due to \eqref{2019092802}, it is sufficient to show that 
$$
 \aint_{Q_c(t_0,x_0)} \aint_{Q_c(t_0,x_0)} G_r [f_2](t,x,s,y) dtdxdsdy 
 \leq N\| f_2 \|_{\bL_{\infty,r}(\infty)}.
$$
Put 
$$
f_{2,1} :=  1_{ Q_{8c}(t_0,x_0) } \cdot f_2  \quad \text{and} \quad f_{2,2} :=  (1-1_{ Q_{8c}(t_0,x_0) } ) \cdot f_2 .
$$
Then by \eqref{2019092801}, \eqref{2019111901}, and \eqref{2019110501},
\begin{align*}
& \aint_{Q_c(t_0,x_0)} \aint_{Q_c(t_0,x_0)} G_r[f_2](t,x,s,y) dtdxdsdy   \\
&\leq \aint_{Q_c(t_0,x_0)} \cG_r [f_{2,1}](t,x) dt dx + \aint_{Q_c(t_0,x_0)} \cG_r [f_{2,1}](s,y)  dsdy \\
&\qquad \qquad \qquad \qquad + \aint_{Q_c(t_0,x_0)} \aint_{Q_c(t_0,x_0)} G_r[f_{2,2}](t,x,s,y) dt dx ds dy \\
& \leq N \left( \| f_{2,1} \|_{\bL_{\infty,r}(\infty)} +  \| f_{2,2} \|_{\bL_{\infty,r}(\infty)} \right) \\
& \leq N \| f_{2} \|_{\bL_{\infty,r}(\infty)}.
\end{align*}
The corollary is proved. 
\end{proof}

\begin{lemma}
							\label{lem2020010804}
Let $r \in [2,\infty)$, $c \in (0,\infty)$, $(t_0,x_0) \in \fR^{d+1}$, and $g \in   \tilde \bL_r(\infty,l_2) \cap \tilde \bL_{\infty,r}(\infty,l_2)$. 
Assume that $g$ vanishes on  $Q_{8c}(t_0,x_0)$ $(a.s.)$.
Then there exists a constant $N=N(d,r,\kappa,K)$ 
\begin{align*}
 \aint_{Q_c(t_0,x_0)}   \aint_{Q_c(t_0,x_0)}| \bG_r[g](t,x,s,y) | dt  dx ds dy 
  \leq N\| g \|_{\bL_{\infty,r}(\infty,l_2)}.
\end{align*}
\end{lemma}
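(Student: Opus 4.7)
The plan is to follow the template of Lemma~\ref{lem2020010803} with two essential modifications: the stochastic operator $\bG_r[g]$ carries an extra layer of structure (an $l_2$-norm and an $L^2(d\rho)$-norm inside the $\omega$-expectation), and the appropriate kernel regularity estimate is the Stochastic H\"ormander condition (Corollary~\ref{cor hor 2}) rather than \eqref{2020012602}. The condition $r\geq 2$ will play a crucial role, since we need $r/2\geq 1$ in order to apply Minkowski on the time integral.

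First I would approximate $g$ by its Sobolev mollifications (as in the proof of Lemma~\ref{lem20200108}) to reduce to the case $g \in \tilde\bH^1_r(\infty, l_2) \cap \tilde\bL_{\infty,r}(\infty, l_2)$, for which the representation \eqref{202001272} gives $\bG_r[g]$ pointwise. After changing variables $z\mapsto x-z$ and $z\mapsto y-z$ in the inner integrals and exploiting the vanishing hypothesis on $g$, one is left with
\begin{align*}
\bG_r[g](t,x,s,y)
= \left(\bE \left[ \left(\int_0^\infty \left|\int_{\fR^d} 1_{(\rho,z)\notin Q_{8c}(t_0,x_0)}\, K(\rho,z)\, g(\rho,z)\, dz \right|_{l_2}^2 d\rho \right)^{r/2} \right]\right)^{1/r},
\end{align*}
where $K(\rho,z):=p_x(t,\rho,x-z)-p_x(s,\rho,y-z)$ is a \emph{deterministic} kernel difference.

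The heart of the argument is a triple application of Minkowski's integral inequality, carried out in the following order: (i) Minkowski for the $l_2$-norm pulls it inside the $dz$ integral, giving $|\!\int K g\, dz|_{l_2}\leq \int |K|\,|g|_{l_2}\, dz$; (ii) Minkowski in $L^{r/2}(\Omega)$, which is valid precisely because $r/2\geq 1$, pulls the $d\rho$ integral outside the expectation, producing
\[
\bG_r[g](t,x,s,y)^2 \leq \int_0^\infty \left(\bE|I(\rho)|_{l_2}^r\right)^{2/r} d\rho,
\qquad I^k(\rho) := \int_{\fR^d} 1_{(\rho,z)\notin Q_{8c}}\, K(\rho,z)\, g^k(\rho,z)\, dz;
\]
(iii) Minkowski in $L^r(\Omega)$ pulls the remaining $dz$ integral outside the expectation. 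Controlling $\||g(\rho,z)|_{l_2}\|_{L^r(\Omega)}$ uniformly by $\|g\|_{\bL_{\infty,r}(\infty,l_2)}$ reduces the whole estimate to
\[
\int_0^\infty \left(\int_{(\rho,z)\notin Q_{8c}(t_0,x_0)} |p_x(t,\rho,x-z)-p_x(s,\rho,y-z)|\, dz\right)^2 d\rho \leq N,
\]
which is exactly the content of Corollary~\ref{cor hor 2}. Averaging the resulting pointwise bound $\bG_r[g](t,x,s,y)\leq N\|g\|_{\bL_{\infty,r}(\infty,l_2)}$ over $(t,x),(s,y)\in Q_c(t_0,x_0)$ completes the proof. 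The main obstacle is simply orchestrating these three nested Minkowski steps in the correct order — expectation, $l_2$, $L^2(d\rho)$ — since any alternative ordering would either reverse the inequality or violate the $r/2\geq 1$ hypothesis; this also clarifies why the assumption $r\geq 2$ cannot be weakened in this lemma.
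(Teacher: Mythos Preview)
Your proposal is correct and follows exactly the approach the paper intends: the paper's own proof is a one-line reference to Lemma~\ref{lem2020010803}, replacing \eqref{2020012602} by Corollary~\ref{cor hor 2}, and what you have written is precisely the unpacking of that reference, including the mollification reduction, the Minkowski chain (with the $r/2\geq 1$ step being the reason the hypothesis $r\geq 2$ enters), and the observation that the kernel $p_x$ here is deterministic so no $\sup_\omega$ is needed.
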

\begin{proof}
The proof of this lemma is almost identical to that of Lemma \ref{lem2020010803}.
We only mention that Corollary \ref{cor hor 2} is used in place of \eqref{2020012602}.
\end{proof}

\begin{corollary}
						\label{2020010621}
Let $r \in [2,\infty)$, $(t_1,x_1) \in \fR^{d+1}$, and $g_1, g_2 \in  \tilde  \bL_r(\infty) \cap \tilde \bL_{\infty,r}(\infty, l_2)$.
Then there exists a positive constant $N=N(d,\kappa,K,r)$ such that 
\begin{align*}
 (\rG_r [g_1+g_2])^\sharp(t_1,x_1) 
\leq  2 \cM ( \rG_r [g_1] ) (t_1,x_1) + N\| g_2 \|_{\bL_{\infty,r}(\infty,l_2)}
\end{align*}
\end{corollary}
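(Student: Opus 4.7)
The plan is to mirror the proof of Corollary \ref{2020010610} verbatim, with the stochastic analogues of the lemmas in place of their deterministic counterparts. Specifically, I would first pick an arbitrary cylinder $Q_c(t_0,x_0)$ containing $(t_1,x_1)$ and aim to bound
\[
\aint_{Q_c(t_0,x_0)} \aint_{Q_c(t_0,x_0)} \big| \rG_r [g_1+g_2](t,x) - \rG_r [g_1+g_2](s,y) \big| \, dt\,dx\,ds\,dy
\]
uniformly in $Q_c(t_0,x_0) \ni (t_1,x_1)$, since taking the supremum over such cylinders recovers the sharp function.

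By the lemma preceding Lemma \ref{lem2020010805} (the stochastic version of Lemma \ref{lem20200108}), this mean oscillation is already bounded by
\[
2 \cM ( \rG_r [g_1] ) (t_1,x_1) + \aint_{Q_c(t_0,x_0)} \aint_{Q_c(t_0,x_0)} \bG_r [g_2](t,x,s,y) \, dt\,dx\,ds\,dy,
\]
so the remaining task is to control the mean-value of $\bG_r[g_2]$ over $Q_c(t_0,x_0) \times Q_c(t_0,x_0)$ by $N \|g_2\|_{\bL_{\infty,r}(\infty,l_2)}$. To this end, I would split $g_2$ into a local and far-away piece,
\[
g_{2,1} := 1_{Q_{8c}(t_0,x_0)} \cdot g_2, \qquad g_{2,2} := (1 - 1_{Q_{8c}(t_0,x_0)}) \cdot g_2,
\]
and apply \eqref{20200108} to replace $\bG_r[g_2]$ by $\rG_r[g_{2,1}](t,x) + \rG_r[g_{2,1}](s,y) + \bG_r[g_{2,2}](t,x,s,y)$.

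The local piece $g_{2,1}$ vanishes outside $Q_{8c}(t_0,x_0)$, so Lemma \ref{lem2020010805} yields
\[
\aint_{Q_c(t_0,x_0)} \rG_r[g_{2,1}](t,x) \, dt\,dx \leq N \|g_{2,1}\|_{\bL_{\infty,r}(\infty,l_2)} \leq N \|g_2\|_{\bL_{\infty,r}(\infty,l_2)},
\]
and the far piece $g_{2,2}$ vanishes on $Q_{8c}(t_0,x_0)$, so Lemma \ref{lem2020010804} gives the matching bound
\[
\aint_{Q_c(t_0,x_0)} \aint_{Q_c(t_0,x_0)} \bG_r[g_{2,2}](t,x,s,y) \, dt\,dx\,ds\,dy \leq N \|g_2\|_{\bL_{\infty,r}(\infty,l_2)}.
\]
Summing these and taking the supremum over all cylinders $Q_c(t_0,x_0) \ni (t_1,x_1)$ produces the desired estimate.

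Since every ingredient has already been isolated in the preceding lemmas, there is no real obstacle here; the only mild subtlety is making sure that the splitting $g_2 = g_{2,1} + g_{2,2}$ preserves predictability so that $\rG_r[g_{2,1}]$ and $\bG_r[g_{2,2}]$ are meaningful in the sense extended in Theorem \ref{thm 5-4}. Since $1_{Q_{8c}(t_0,x_0)}$ is deterministic and $g_2$ is predictable, both $g_{2,1}$ and $g_{2,2}$ are predictable, and the membership $g_{2,i} \in \tilde \bL_r(\infty,l_2) \cap \tilde \bL_{\infty,r}(\infty,l_2)$ follows from $g_2$ being in the same class; so this subtlety is resolved immediately.
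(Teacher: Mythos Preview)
Your proposal is correct and matches the paper's approach exactly: the paper's proof simply reads ``Follow the proof of Corollary \ref{2020010610} with Lemmas \ref{lem2020010805} and \ref{lem2020010804},'' and your write-up spells out precisely that substitution. The additional remark on predictability of the pieces $g_{2,1}$, $g_{2,2}$ is a reasonable sanity check, though the paper does not mention it.
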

\begin{proof}
Follow the proof of Corollary \ref{2020010610} with Lemmas \ref{lem2020010805} and \ref{lem2020010804}.
\end{proof}
To handle the boundedness of $\cG_r$ and $\rG_r$ simultaneously, we  introduce a Banach space valued operator. 
Let $\rB$ be a Banach space and $\bL$ be a subspace of $L_r(\fR^{d+1} ; \rB) $. Recall that any operator $\cL$ from $\bL$ to $L_r( \fR^{d+1})$ is called quasilinear operator if there exists a positive constant $N_0$ such that for all $c \in \fR$ and $f_1,f_2 \in \bL$, we have
$$
\cL (cf_1) = c \cL f_1,
$$
$$
\cL ( f_1 + f_2) \leq N_0 \left(\cL f_1 + \cL  f_2 \right).
$$
Note that both operators $f \mapsto (\cG_r[f])^\sharp$ and $g \mapsto (\rG_r[g])^\sharp$ are not quasilinear even though
$f \mapsto \cG_r[f] $ and $g \mapsto \rG_r[g]$ are.  
Thus the classical Marcinkiewicz interpolation theorem cannot be applied directly and we need the following variant of the interpolation theorem.
We emphasize that the operator $\cL$ appearing in the following lemma does not have to be quasilinear. 
\begin{lemma}
						\label{ext lem}
Let $1 < r \leq p <\infty$, $\rB$ be a Banach space, $\bL$ be a subspace of $L_r(\fR^{d+1} ; \rB) $,  and 
$\cL$ be a bounded operator from $\bL$ to $L_r( \fR^{d+1})$, i.e. 
 there exists a positive constant $N$ such that
\begin{align}
							\label{2020010602}
\| \cL f \|_{L_r( ( \fR^{d+1})} \leq N_1 \| f \|_{ L_r(  \fR^{d+1} ;\rB) } 
\end{align}
for all $ f \in  \bL$.
Assume that there exists a positive constant $N_1$ such that for all $(t,x) \in \fR^{d+1}$, $\lambda>0$,  and $f \in \bL$,
\begin{align}
							\label{20200106}
 (\cL [f])^\sharp(t,x) 
\leq   N_2 \left(\cM ( \cL [f_{1,\lambda}] ) (t,x) + \|f_{2,\lambda} \|_{L_\infty((0,\infty) \times \fR^d; \rB)} \right),
\end{align}
where
$$
f_{1,\lambda}(t,x) =  f(t,x) 1_{ \{ \| f\|_{\rB}  > \lambda \} } (t,x)
$$
and
$$
f_{2,\lambda}(t,x) =  f(t,x) 1_{ \{ \| f\|_{\rB}   \leq  \lambda \} } (t,x).
$$
Then there exists a constant $N=N(d,p,N_1,N_2)$ such that 
\begin{align}
							\label{2020010630}
\| \cL f \|_{L_p( ( \fR^{d+1})} \leq N \| f \|_{ L_p( (0,\infty) \times \fR^d ;\rB) }
\end{align}
for all $f \in   \bL$.
\end{lemma}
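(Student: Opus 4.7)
The plan is to run a Marcinkiewicz-type argument that never invokes quasilinearity of $\cL$. The reason such an argument is available is that hypothesis \eqref{20200106} already has the splitting of $f$ built into it — which is precisely the role quasilinearity plays in the classical interpolation — so we can push everything through at the level of distribution functions of $(\cL f)^\sharp$ and the Hardy--Littlewood maximal function.

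First I would reduce the desired $L_p$ estimate \eqref{2020010630} to an $L_p$ bound on the sharp function $(\cL f)^\sharp$ by invoking the Fefferman--Stein inequality \eqref{fs thm}. When $p=r$ the conclusion is already contained in \eqref{2020010602}, so I assume $p>r$ from now on. The key step is then a level-set inclusion: fixing $\alpha>0$ and plugging $\lambda=\alpha$ into \eqref{20200106}, and using that $\|f_{2,\alpha}\|_{L_\infty(\fR^{d+1};\rB)}\leq \alpha$ by the very definition of $f_{2,\alpha}$, I obtain
\begin{align*}
(\cL f)^\sharp(t,x) \leq N_2 \cM(\cL f_{1,\alpha})(t,x) + N_2 \alpha,
\end{align*}
from which
\begin{align*}
\{(t,x)\in\fR^{d+1} : (\cL f)^\sharp(t,x) > 2 N_2 \alpha\} \subset \{(t,x)\in\fR^{d+1} : \cM(\cL f_{1,\alpha})(t,x)>\alpha\}.
\end{align*}

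Next I would combine the weak-type $(r,r)$ inequality for $\cM$ with the $L_r$ boundedness \eqref{2020010602} to estimate the right-hand level set by $N\alpha^{-r}\|\cL f_{1,\alpha}\|_{L_r(\fR^{d+1})}^r \leq N N_1^r \alpha^{-r}\|f_{1,\alpha}\|_{L_r(\fR^{d+1};\rB)}^r$. Writing out $\|(\cL f)^\sharp\|_{L_p(\fR^{d+1})}^p$ via the layer-cake formula, substituting the above distribution-function bound (with the change of variable $\alpha\mapsto 2N_2\alpha$), interchanging integrals by Fubini, and computing the inner $\alpha$-integral
\begin{align*}
\int_0^{\|f(t,x)\|_\rB} \alpha^{p-r-1}\,d\alpha = \frac{\|f(t,x)\|_\rB^{p-r}}{p-r}
\end{align*}
(which converges precisely because $p>r$), I arrive at $\|(\cL f)^\sharp\|_{L_p(\fR^{d+1})} \leq N(d,p,N_1,N_2)\|f\|_{L_p(\fR^{d+1};\rB)}$. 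Feeding this back through Fefferman--Stein yields \eqref{2020010630}.

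The main difficulty here is more conceptual than technical: the classical Marcinkiewicz argument uses quasilinearity only to produce an estimate of the form $\cL(f_{1,\lambda}+f_{2,\lambda}) \lesssim \cL f_{1,\lambda} + \cL f_{2,\lambda}$, and \eqref{20200106} supplies an analogous splitting at the level of $(\cL f)^\sharp$. Once that is recognized, the proof is a routine distribution-function calculation, and at no point is any sub- or quasi-linear structure of $\cL$ itself ever used.
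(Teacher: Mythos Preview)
Your proposal is correct and follows essentially the same route as the paper: a Marcinkiewicz-type layer-cake argument on $(\cL f)^\sharp$, using the built-in splitting from \eqref{20200106} together with $\|f_{2,\lambda}\|_{L_\infty}\le\lambda$, the weak/strong $(r,r)$ bound for $\cM$, \eqref{2020010602}, Fubini, and finally Fefferman--Stein \eqref{fs thm}. The only cosmetic difference is that the paper introduces an auxiliary parameter $\delta$ (truncating at $\delta\lambda$ and choosing $N_2\delta<\tfrac12$) where you instead look at the level set $\{(\cL f)^\sharp>2N_2\alpha\}$; these are equivalent reparametrizations of the same argument.
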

\begin{proof}
If $p=r$, then the result is trivial. So we assume $r < p$.
Let $f \in \bL$.
For each $\lambda>0$ and $\delta>0$, we decompose $f$ into
$$
f_{1, \delta \lambda}(t,x) =  f(t,x) 1_{ \{ \| f\|_{\rB}  > \delta \lambda \} } (t,x)
$$
and
$$
f_{2, \delta \lambda}(t,x) =  f(t,x) 1_{ \{ \| f\|_{\rB}   \leq  \delta \lambda \} } (t,x),
$$
where $\delta>0$ will be fixed later. 
Let $\lambda \leq  (\cL [f] )^\sharp(t,x)$.
Then by \eqref{20200106},
\begin{align*}
\lambda \leq (\cL [f] )^\sharp(t,x)
&\leq N_2 \cM ( \cL [f_{1,\delta \lambda}] ) (t,x) + N_1\| f_{2, \delta \lambda} \|_{L_\infty((0,\infty) \times \fR^d; \rB)} \\
&\leq  N_2 \cM ( \cL  [f_{1, \delta \lambda}] ) (t,x) + N_2\delta \lambda.
\end{align*}
Fixing $\delta>0$ so that $N_2 \delta < \frac{1}{2}$, we have
$$
\lambda \leq 2N_2  \cM ( \cL  [f_{1, \delta \lambda}] ) (t,x).
$$
Thus for each $\lambda >0$,
\begin{align*}
|\{(t,x) \in \fR^{d+1} : \lambda \leq (\cL  [f])^\sharp (t,x) \} | 
\leq |\{(t,x) \in \fR^{d+1} : \lambda \leq  2N_2  \cM ( \cL [f_{1, \delta \lambda}] ) (t,x) \} | .
\end{align*}
Moreover by \eqref{fs thm} and \eqref{2020010602},
$$
\|   \cM (\cL  [f_{1, \delta \lambda}] ) \|_{L_p(\fR^{d+1})}
\leq N\| \cL  [f_{1, \delta \lambda}]  \|_{L_p(\fR^{d+1})}
\leq N\| f_{1, \delta \lambda}  \|_{L_p(\fR^{d+1})}
$$
for all $\lambda > 0$.
Therefore by Fubini's Theorem and Chebyshev's inequality,
\begin{align*}
\| (\cL [f] )^\sharp \|^p_{L_p(\fR^{d+1})}
&=  p \int_0^\infty \lambda^{p-1} |\{(t,x) \in \fR^{d+1} : \lambda \leq (\cL [f])^\sharp (t,x) \} |  d\lambda  \\
&\leq  p \int_0^\infty \lambda^{p-1} |\{(t,x) \in \fR^{d+1} : \lambda \leq 2N_2  \cM ( \cL [f_{1,\delta \lambda}] ) (t,x) \} | d\lambda  \\
&\leq  N  \int_0^\infty \lambda^{p-r-1}  \int_{\fR^{d+1}}  |\cM ( \cL [f_{1, \delta \lambda}] ) (t,x)|^r  dt dx d\lambda  \\
&\leq  N  \int_0^\infty \lambda^{p-r-1}  \int_{ (0,\infty) \times \fR^{d}}  \| f_{1, \delta \lambda} (t,x) \|_{\rB}^r  dt dx d\lambda  \\
&=  N  \int_{ (0,\infty) \times \fR^d}  \int_0^{  \|f\|_{\rB}  / \delta} \lambda^{p-r-1}   d \lambda  \| f (t,x) \|_{\rB}^r dt dx  \\
&\leq   N  \int_{ (0,\infty) \times \fR^{d}}   \| f(t,x)\|^p_{\rB}  dt dx.
\end{align*}
By \eqref{fs thm} again,
$$
\|  \cL [f] \|_{L_p(\fR^{d+1})}\leq N\| (\cL [f] )^\sharp \|_{L_p(\fR^{d+1})},
$$
which completes the proof  of the lemma. 
\end{proof}

\vspace{3mm}
{\bf Proof of Theorem \ref{thm 5}}

\vspace{3mm}

Let $r \in (1,\infty)$ and  $f \in \bL_{r}(\infty) \cap \bL_{\infty}(\infty)$. We apply Lemma \ref{ext lem}.
Recall
$$
\bL_{r}(\infty)  = L_r( (0,\infty) \times \fR^d ; L_r(\Omega))
$$
and
$$
\bL_{\infty}(\infty)  =  L_\infty( (0,\infty) \times \fR^d ; L_r(\Omega))
$$
By properties of $L_p$-space, $L_r (\Omega)$ is a Banach space.
Obviously, $\bL_{r}(\infty) \cap \bL_{\infty}(\infty) $ is a subspace of $  L_r( \fR^{d+1} ; L_r(\Omega))$
by considering the trivial extensions with the indicator $1_{(0,\infty)}$.
Thus  by setting $\bL:= \bL_{r}(\infty) \cap \bL_\infty ( \infty)$,
the operator $\cG_r$ is an operator from  $\bL$ to $L_r(\fR^{d+1})$. 
 Moreover,  by Theorem \ref{thm 5-2} and  Corollary \ref{2020010610}, the main assumptions \eqref{2020010602} and \eqref{20200106} hold
  with $\cL = \cG_r$ and $\rB = L_r(\Omega)$.
Finally the theorem is proved due to \eqref{2020010630}.

\vspace{3mm}
{\bf Proof of Theorem \ref{thm 5-3}}

\vspace{3mm}

The proof of this theorem is also an easy application of Lemma \ref{ext lem}.
Noe that $L_r (\Omega;l_2)$ is a Banach space and the operator $\rG_r$ is an operator from 
$\bL:= \tilde \bL_{r}(\infty,l_2) \cap \tilde \bL_\infty ( \infty,l_2)$ to $L_r(\fR^{d+1})$. 
Moreover, Theorem \ref{thm 5-4} and Corollary \ref{2020010621} imply that \eqref{2020010602} and \eqref{20200106} hold 
 with $\cL = \rG_r$ and $\rB = L_r(\Omega;l_2)$. 
 Finally the theorem is obtained from \eqref{2020010630}.

\mysection{the proofs of main theorems}
									\label{pf thm 3-2}

\vspace{3mm}
{\bf Proof of Theorem \ref{thm 3-2}}

\vspace{3mm}
The uniqueness comes from the classical result (see \cite[Theorem 4.2]{Krylov1999}).
Thus we focus on proving the existence of a solution $u$ satisfying \eqref{2020011001} and \eqref{2020011001-2}.
We use the solution representation and boundedness of the operators $\cT$ and $\cG_r$.
As mentioned in the proof of Theorem \ref{thm 3-1}, for each $\omega \in \Omega$, there exists a unique solution $u(t,x)$ to \eqref{det ran eqn} such that
\begin{align}
								\label{20200124}
u(t,x) = \int_0^t \int_0^t \bar p(\omega,t,s,x-y)f(s,y) dyds= \cT f(t,x),
\end{align}
where $\bar p(\omega, t,s,x-y)$ is defined in \eqref{ran ker def}. 
Thus it is sufficient to show \eqref{2020011001} and \eqref{2020011001-2} for $u$ defined in \eqref{20200124}. 
First we consider very nice stochastic process $f$, i.e. assume that $f \in  \bH_c^\infty(T)$.
Then  
\begin{align*}
\bE\left[|u_{xx}(t,x)|^r \right]^{1/r} 
&= \left( \bE\left[ \left|\int_0^t \left[ \int_{\fR^d} \bar p_{xx}(\omega,t,s,x-y)f(s,y) dy \right] ds \right|^r \right]  \right)^{1/r} \\
&= \cG_r[f] (t,x)
\end{align*}
and
$$
f \in \bL_{\infty,r}(\infty) \cap \bL_r(\infty).
$$
Therefore, by Lemma \ref{lem20200110} and Theorem \ref{thm 5}, we have \eqref{2020011001} and \eqref{2020011001-2} for all
$f \in  \bH_c^\infty(T)$.
Next we consider the general case $f \in \bL_{p,r}(T)$. We use the standard approximation argument based on the linear property of the equation. 
By Lemma \ref{lem dense}, $ \bH_c^\infty(T)$ is dense in $\bL_{p,r}(T)$. Thus there exists a sequence $f_n \in \bH_c^\infty(T)$ such that
$$
\|f_n - f\|_{\bL_{p,r}(T)} \to 0 \quad \text{as} \quad n \to \infty. 
$$
For each $f_n$, there exists a unique solution $u_n \in \bH_{p,r}^2(T)$ to \eqref{det ran eqn} and thus
$u_n -u_m$ becomes a solution to 
\begin{align*}
				 d(u_n-u_m)= \left( a^{ij}(\omega, t)(u_n -u_m)_{x^ix^j}+ f_n -f_m  \right)dt , \quad t \in (0,T); \quad u(0,\cdot)=0. 
\end{align*}
Since we have already obtained \eqref{2020011001} and \eqref{2020011001-2} for all $f_n -f_m \in \bH_c^\infty(T)$, we have
$$
\|u_n -u_m\|_{\bH_{p,r}^2(T)} \leq N_1 \|f_n -f_m\|_{\bL_{p,r}(T)} \qquad 
$$
and
$$
\|(u_n)_{xx} -(u_m)_{xx}\|_{\bL_{p,r}(T)} \leq N \|f_n -f_m\|_{\bL_{p,r}(T)} \qquad 
$$
for all $n, m$. 
Finally the uniqueness of a solution and the completeness of $L_p$-spaces show that \eqref{2020011001} and \eqref{2020011001-2} hold for all $f \in \bL_{p,r}(T)$ and the corresponding solution  $u$. The theorem is proved. \qed

\vspace{3mm}
{\bf Proof of Theorem \ref{main thm}}

\vspace{3mm}

Since the uniqueness of a solution $u$ easily comes from  Theorem \ref{thm 3-2}, we only focus on showing the existence of a solution and estimates \eqref{main est} and \eqref{main est-2}.
The proof of the theorem is similar to that of Theorem \ref{thm 3-2}. However, it should be noticed that the solution representation is impossible if the coefficients $a^{ij}(\omega,t)$ are random
since for each $s<t$, $p(\omega,t,s,x-y) g(s,y)$ is not  $\rF_s$-adapted (cf. Remark \ref{rmk 20200123} and \cite[Remark 4.2]{kim2019sharp}). Thus we need to consider the simplest non-random coefficients $a^{ij}(\omega,t)= \delta^{ij}$ first,  where $\delta^{ij}$ denotes the Kronecker delta.

\vspace{2mm}
{\bf Step 1}. We assume that $f=0$, $g \in \tilde \bH_c^\infty(T,l_2)$, and $a^{ij}(\omega,t) =\delta^{ij}$. Then it is well-known (cf. the proof of Theorem 4.2 of \cite{Krylov1999}) that the solution $u$ to \eqref{main eqn} is given by
$$
u(t,x) = \int_0^t \int_{\fR^d} p(t,s,x-y)g^k(s,y)dydw^k_s= \bT g (t,x).
$$
Moreover,  by the Burkholder-Davis-Gundy inequality,
\begin{align*}
\bE\left[|u_{xx}(t,x)|^r\right] 
=  \bE\left[\left|\int_0^t \left[ \int_{\fR^d} p_x(t,s,x-y)g_x^k(s,y)dy \right]dw^k_s \right|^r\right]
\approx |\rG_r[g_x](t,x)|^r,
\end{align*}
that is, there exists a positive constant $N$ such that
\begin{align*}
N^{-1} |\rG_r[g_x] (t,x)|^r \leq \bE\left[\left|\int_0^t \left[ \int_{\fR^d} p_x(t,s,x-y)g_x^k(s,y)dy \right]dw^k_s \right|^r\right]
\leq N |\rG_r[g_x](t,x)|^r,
\end{align*}
where $N=N(d,r)$. 
Thus  by Lemma \ref{lem2020011010} and Theorem \ref{thm 5-3}, we have
\begin{align*}
\| u \|_{\bL_{p,r}(T,l_2)} \leq N_1(d,p,T) \left(  \|g\|_{\bL_{p,r}(T,l_2)}  \right)
\end{align*}
and
\begin{align*}
\| u_{xx} \|_{\bL_{p,r}(T,l_2)} \leq N_2(d,p,r) \left(    \|g_x\|_{\bL_{p,r}(T,l_2)}  \right).
\end{align*}

\vspace{2mm}
{\bf Step 2}. We assume that $f=0$, $g \in \tilde \bH_{p,r}^1(T,l_2)$, and $a^{ij}(\omega, t) =\delta^{ij}$. 
Since $\tilde \bH_c^\infty(T,l_2)$ is dense in $\tilde \bH_{p,r}^1(T,l_2)$, the approximation argument in the proof of Theorem \ref{thm 3-2} gives the result.

\vspace{2mm}
{\bf Step 3} (General case). By Step 2, there exists a unique solution $u^1 \in \bH_{p,r}^2(T)$ to 
\begin{align*}
				 du^1= \left( \Delta u^1  \right)dt + g^k dw^k_t, \quad t \in (0,T); \quad u^1(0,\cdot)=0. 
\end{align*}
such that  
\begin{align}
						\label{2020011030}
\| u^1 \|_{\bL_{p,r}(T)} \leq N_1(d,p,T) \left(  \|g\|_{\bL_{p,r}(T,l_2)}  \right)
\end{align}
and
\begin{align}
						\label{2020011031}
\| u^1_{xx} \|_{\bL_{p,r}(T)} \leq N_2(d,p,r) \left(    \|g_x\|_{\bL_{p,r}(T,l_2)}  \right).
\end{align}
Moreover by Theorem \ref{thm 3-2}, there exists a unique solution $u^2 \in \bH_{p,r}^2(T)$ to 
\begin{align*}
				 du^2= \left(  a^{ij}(\omega,t) u^2_{x^ix^j}  + a^{ij}(\omega,t) u^1_{x^ix^j}- \Delta u^1 + f    \right)dt, \quad t \in (0,T); \quad u^2(0,\cdot)=0
\end{align*}
such that
\begin{align}
							\notag
\| u^2 \|_{\bH^2_{p,r}(T)} 
&\leq N_1(d,p,r,\kappa,K,T) \left(  \| a^{ij}(\omega,t) u^1_{x^ix^j}- \Delta u^1 + f \|_{\bL_{p,r}(T)}  \right) \\
							\label{2020011032}
&\leq N_1 \left(  \|u^1_{xx}\|_{\bL_{p,r}(T)} +\| f \|_{\bL_{p,r}(T)}  \right)
\end{align}
and
\begin{align}
							\notag
\| u^2_{xx} \|_{\bL_{p,r}(T)} 
&\leq N_2(d,p,r,\kappa,K) \left(    \| a^{ij}(\omega,t) u^1_{x^ix^j}- \Delta u^1 + f \|_{\bL_{p,r}(T)}  \right) \\
						\label{2020011033}
&\leq N_2 \left(  \|u^1_{xx}\|_{\bL_{p,r}(T)} +\| f \|_{\bL_{p,r}(T)}  \right).
\end{align}
Thus finally setting $u:=u^1+u^2$ and combining \eqref{2020011030} - \eqref{2020011033}, we have a solution $u \in \bH_{p,r}^2(T)$ to \eqref{main eqn} satisfying
\eqref{main est} and \eqref{main est-2}.  
The theorem is proved. \qed

\vspace{3mm}
{\bf Proof of Theorem \ref{main thm 2}}
\vspace{3mm}

Note that if $p \geq 2$, then the function $|x|^p$ is twice continuous differentiable. 
Thus by Theorem \ref{main thm} and H\"older's inequality, it is obvious that 
\begin{align*}
& \int_0^T \int_{\fR^d} |m_r(t,x)|^{p/r} dt  + \int_0^T \int_{\fR^d} |\partial_{xx} m_r(t,x)|^{p/r} dt \\
&\leq N_1\left( \|f\|^p_{\bL_{p,r}(T)} + \|g\|^p_{\bH^1_{p,r}(T)}\right).
\end{align*}
Therefore it is sufficient to control
\begin{align*}
\int_0^T \int_{\fR^d} |\partial_t m_r(t,x)|^{p/r} dt.
\end{align*}
Let $x \in \fR^d$. 
Recall that $\phi \in C_c^\infty(\fR^d)$ is nonnegative function such that
$\int_{\fR^d}\phi(y)dy = 1$ and $\phi^\varepsilon (y) = \frac{1}{\varepsilon^d} \phi( y/\varepsilon)$.
Putting $\phi^\varepsilon(x- \cdot)$ in \eqref{def sol-2}, we have
\begin{align}
							\notag
u^\varepsilon(t,x)
&=   (u_0 ,\phi) + \int^t_0  \left[  a^{ij}(t) u^\varepsilon_{x^ix^j}(s,x) + (f^\varepsilon(s,x) \right]ds \\
							\label{2020111510}
& +\sum_k \int^t_0  g^\varepsilon(s,\cdot) dw^k_t \qquad \forall t \in (0,T)~ (a.s.),
\end{align}
where $u^\varepsilon(t,x)= \left(u(t,\cdot),\phi^\varepsilon(x- \cdot) \right)$.
Then by It\^o's formula, 
\begin{align*}
&|u^\varepsilon(t,x)|^r  \\
&= \int_0^t r|u^\varepsilon(s,x)|^{r-2}u^\varepsilon(s,x) \left(\bar a^{ij}(s) u^\varepsilon_{x^ix^j}(s,x) + f^\varepsilon(s,x) \right)ds \\
&\quad + \int_0^t r|u^\varepsilon(s,x)|^{r-2}u^\varepsilon(s,x) \left( g^{\varepsilon, k}(s,x) \right)dw^k_t\\
&\quad+ \int_0^t \frac{1}{2} r(r-1)|u^\varepsilon(s,x)|^{r-2}|g^\varepsilon|_{l_2}^2(s,x) ds
\end{align*}
for all $t \in (0,T)$ almost surely.
Moreover, due to \eqref{2020111510} and H\"older's inequality,
\begin{align*}
&\int_0^t |u^\varepsilon(s,x)|^{2(r-1)} |g^{\varepsilon }(s,x)|_{l_2}^2 ds \\
&\leq N\sup_{s \leq t} | u^\varepsilon(s,x) |^{2(r-2)} \int_0^t \int_{U(x)}|g(s,y)|_{l_2}^2dy ds \\
&\leq N(\omega)\int_0^t \int_{U(x)}|g(s,y)|_{l_2}^2dy ds \\
\end{align*}
for all $t \in (0,T)$ almost surely, where $U(x)$ is a bounded subset of $\fR^d$ depending on $x$. 
Since
\begin{align*}
\bE\left[ \left|\int_0^t \int_{U(x)}|g(s,y)|_{l_2}^2dy ds\right|^{r/2} \right]
\leq N \|g\|^{r/2}_{\bL_{p,r(T,l_2)}} <\infty,
\end{align*}
we obtain
\begin{align*}
\bE\left[\int_0^t r|u^\varepsilon(s,x)|^{r-2}u^\varepsilon(s,x) \left( g^{\varepsilon, k}(s,x) \right)dw^k_t \right]=0.
\end{align*}
Thus we have
\begin{align*}
&m^\varepsilon_r(t,x):=\bE|u^\varepsilon(t,x)|^r  \\
&= \int_0^t r\bE \left[|u^\varepsilon(s,x)|^{r-2}u^\varepsilon(s,x) \left(\bar a^{ij}(s) u^\varepsilon_{x^ix^j}(s,x) + f^\varepsilon(s,x) \right) \right]ds \\
&\quad+ \int_0^t \frac{1}{2} r(r-1)\bE \left[|u^\varepsilon(s,x)|^{r-2}|g^\varepsilon|_{l_2}^2(s,x)  \right] ds.
\end{align*}
Applying the chain rule,  H\"older inequality, and Young inequality with any $\delta>0$, we have
\begin{align*}
& \partial_t m^\varepsilon_r(t,x)\\
&=  r\bE \left[|u^\varepsilon(t,x)|^{r-2}u^\varepsilon(t,x) \left(\bar a^{ij}(t) u^\varepsilon_{x^ix^j}(t,x) + f^\varepsilon(t,x) \right) \right] \\
&\quad+    \frac{1}{2} r(r-1) \bE \left[|u^\varepsilon(t,x)|^{p-2}|g^\varepsilon|_{l_2}^2(t,x)  \right] \\
& \leq \delta \bE\left[ \left(u^\varepsilon(t,x)\right)^r \right]  \\
&\quad + N\left( \bE\left[ \left(u_{xx}^\varepsilon(t,x)\right)^r \right]  
+\bE\left[ \left(f^\varepsilon(t,x)\right)^r \right]
+\bE\left[ \left(g^\varepsilon(t,x)\right)^r \right]\right).
\end{align*} 
Taking $\delta>0$ small enough we have
\begin{align*}
\int_0^T \int_{\fR^d} |\partial_t m_r(t,x)|^{p/r}  dt \leq N_1\left( \|f\|^p_{\bL_{p,r}(T)} + \|g\|^p_{\bH^1_{p,r}(T)}\right).
\end{align*}
Finally taking $\varepsilon \to 0$ and applying Fatou's lemma, we obtain \eqref{main est 2}.

\mysection{Acknowledgement}

The author is very grateful to prof. Kyeong-Hun Kim for helpful discussions and suggesting Theorem \ref{main thm 2} as an application.

\end{document}